\long\def\comment#1\endcomment{}
\theoremstyle{plain}
\newtheorem{theorem}{\sc Theorem}[section]
\newtheorem{lemma}[theorem]{\sc Lemma}
\newtheorem{prop}[theorem]{\sc Proposition}
\newtheorem{coroll}[theorem]{\sc Corollary}
\theoremstyle{plain}
\newtheorem{defn}[theorem]{\sc Definition}
\theoremstyle{exercise}
\newtheorem{remark}[theorem]{\sc Remark}
\newtheorem{example}[theorem]{\sc Example}
\makeatletter \@addtoreset{equation}{section} \makeatother
\def\eqref#1{\thetag{\ref{#1}}}
\let\latexref=\ref
\def\ref#1{{\normalfont{\latexref{#1}}}}
\newcommand{\ldot}{{\:\raisebox{2,3pt}{\text{\circle*{1.5}}}}}
\newcommand{\udot}{{\:\raisebox{3pt}{\text{\circle*{1.5}}}}}
\def\dlim_#1{{\displaystyle\lim_{#1}}^\hdot}
\newcommand{\C}{{\mathcal C}}
\newcommand{\D}{{\mathcal D}}
\newcommand{\id}{\operatorname{\rm id}}
\newcommand{\Ob}{\mathrm{Ob}}
\renewcommand{\Bar}{\mathrm{Bar}}
\newcommand{\Cobar}{\mathrm{Cobar}}
\newcommand{\FCobar}{\mathrm{FCobar}}
\newcommand{\Hom}{\mathrm{Hom}}
\newcommand{\RHom}{\mathrm{RHom}}
\newcommand{\Hoch}{\mathrm{Hoch}}
\newcommand{\dg}{{dg}}
\newcommand{\Ho}{\mathrm{Ho}}
\newcommand{\Cone}{\mathrm{Cone}}
\newcommand{\pretr}{\mathrm{pretr}}
\newcommand{\Cat}{{\mathbb{C}}}
\renewcommand{\k}{\Bbbk}
\newcommand{\Assoc}{{{\mathcal{A}ssoc}}}
\newcommand{\pprime}{{\prime\prime}}
\newcommand{\tot}{\mathrm{tot}}
\newcommand{\Coh}{{\mathscr{C}{oh}}}
\newcommand{\Hot}{\mathrm{Hot}}
\newcommand{\RRHom}{{\underline{\RHom}}}
\newcommand{\Graphs}{{\mathbb{G}}}
\newcommand{\dgwu}{{dgwu}}
\newcommand{\dgu}{{dgu}}
\newcommand{\Iso}{\mathrm{Iso}}
\newcommand{\Free}{\mathrm{Free}}
\newcommand{\wu}{\mathrm{wu}}
\newcommand{\abar}{{\bar{a}}}
\newcommand{\Cbar}{{\overline{\mathcal{C}}}}
\newcommand{\pa}{{\mathscr{P}aths}} 
\newcommand{\Paths}{{\mathscr{P}aths}} 
\renewcommand{\C}{\mathcal{C}}
\renewcommand{\D}{\mathcal{D}}
\newcommand{\K}{\mathcal{K'}}
\newcommand{\Y}{\mathcal{Y}}
\newcommand{\X}{\mathcal{X}}
\newcommand{\E}{\mathcal{E}}
\title{\sc{A closed model structure on the category of weakly unital dg categories, II}}
\author{\sc{Piergiorgio Panero and Boris Shoikhet}}
\date{}
\begin{document}\maketitle
	{\footnotesize
		\begin{center}{\parbox{4,5in}{{\sc Abstract.}
				In this paper, which is subsequent to our previous paper [PS] (but can be read independently from it), we continue our study of the closed model structure on the category $\Cat_{\dgwu}(\k)$ of small weakly unital dg categories (in the sense of Kontsevich-Soibelman [KS])  over a field $\k$. In [PS], we constructed a closed model structure on the category of weakly unital dg categories, imposing a technical condition on the weakly unital dg categories, saying that $\id_x\cdot \id_x=\id_x$ for any object $x$. Although this condition led us to a great simplification, it was redundant and had to be dropped. 
				
				Here we get rid of this condition, and  provide a closed model structure in  full generality. The new closed model category is as well cofibrantly generated, and it is proven to be Quillen equivalent to the closed model category $\Cat_\dg(\k)$ of (strictly unital) dg categories over $\k$, given by Tabuada [Tab1]. Dropping the condition $\id_x^2=\id_x$ makes the construction of the closed model structure more distant from loc.cit., and requires new constructions. One of them is a pre-triangulated hull of a wu dg category, which in turn is shown to be a wu dg category as well. 
					
				One example of a weakly unital dg category which naturally appears is the bar-cobar resolution of a dg category. We supply this paper with a refinement of the classical bar-cobar resolution of a unital dg category which is strictly unital (appendix B). A similar construction can be applied to constructing a cofibrant resolution in $\Cat_\dgwu(\k)$. 
			}}
		\end{center}
	}


\section{Introduction}
\subsection{}
Many algebraic constructions, dealing with unital dg algebras (or, more generally, unital dg categories) give rise to only weakly ones. 
The simplest example is the bar-cobar resolution $R(A)=\Cobar(\Bar(A))$ of a dg algebra $A$ (resp., a dg category $A$): for unital $A$, $R(A)$ is only weakly unital. Another example which we keep in mind is a further generalisation of the twisted tensor product of small dg categories [Sh], which is supposed to have better homotopical and monoidal properties, and which exists only in the weakly unital context.
It would be beneficial to have a more relaxed (than the closed model category [Tab1] of small dg categories) closed model category, which is formed by small weakly unital dg categories, and which is Quillen equivalent to the closed model category of small dg categories loc.cit. In this paper, we completely solve this problem. 

Among the three definitions of a weakly unital dg category, existing in literature (see [Ly2] for an overview of all of them), the definition given by Kontsevich and Soibelman [KS, Sect. 4.2] seems to be the only one which gives rise to a closed model structure. In this paper, we work with this definition (we recall it in Section \ref{s.wudef}).
\subsection{}
In this paper, we provide a cofibrantly generated Quillen model structure on the category $\Cat_\dgwu(\k)$ of Kontsevich-Soibelman weakly unital dg categories [KS, Sect. 4.2] over a field $\k$ (we recall the definition in Section \ref{s.wudef}). We also establish a Quillen equivallence between the model category $\Cat_\dgwu(\k)$ and the category $\Cat_\dg(\k)$ of unital dg categories over $\k$ (with Tabuada's model structure [Tab1] on it). 

In our previous paper [PS], we constructed a cofibrantly generated Quillen model structure on the category $\Cat^0_{dgwu}(\Bbbk)$ of small Kontsevich-Soibelman weakly unital dg categories\footnote{Our notations here are different from the ones in [PS]. Our $\Cat_\dgwu(\k)$ here was denoted by $\mathrm{Cat}^\prime_\dgwu(\k)$ in [PS], and  our $\Cat^0_\dgwu(\k)$ was  $\mathrm{Cat}_\dgwu(\k)$ in [PS].}  over a field $\Bbbk$ with an extra condition saying that $\id_X\circ \id_X=\id_X$ for any object $X$.  We also proved that there was a Quillen equivalence $L\colon \Cat^0_\dgwu(\k)\rightleftarrows \Cat_\dg(\k): R$, where $\Cat_\dg(\k)$ is the closed model structure on the category of small (unital) dg categories over $\k$. The condition $\id_X\circ\id_X=\id_X$ had a technical nature and was imposed  to avoid some difficulties in the proofs we experienced for the general case of $\Cat_\dgwu(\k)$. However, this condition is rather artificial and is not fulfilled in any reasonable example, so the next step is to get rid of it. We complete it here. 

Our results are counterparts of the main results of [PS] in this general setting, though the proofs need some essentially new ideas and constructions. Among them, we  draw reader's attention to the pre-triangulated hull of a weakly unital dg category (Section 3), and a weakly unital replacement of the Kontsevich dg category $\mathcal{K}$ (Section \ref{sectionkcat}).

We prove
\begin{theorem}[proven in Theorem \ref{bigtheorem}]\label{theoremintro1}
For a field $\k$, there is a cofibrantly generated Quillen model structure on $\Cat_\dgwu(\k)$.
\end{theorem}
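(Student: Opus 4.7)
The plan is to follow the strategy of Tabuada~[Tab1] for strict dg categories, as adapted in our previous paper~[PS] for the idempotent weakly unital case, and to verify the hypotheses of the recognition theorem for cofibrantly generated model categories. I would first fix the three distinguished classes. A weak equivalence $F\colon\mathcal{A}\to\mathcal{B}$ in $\Cat_\dgwu(\k)$ is a wu dg functor which is quasi-fully-faithful (a quasi-isomorphism on every hom-complex) and essentially surjective on $H^0$; a fibration is a wu dg functor which is degreewise surjective on hom-complexes and an isofibration on $H^0$; cofibrations are defined by the left lifting property against trivial fibrations.

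Next, I would specify the generating sets. The generating cofibrations $I$ consist of the inclusion $\emptyset\hookrightarrow \k_\pt$ of the empty wu dg category into the free one-object wu dg category, together with the standard disc-sphere family $S(n)\hookrightarrow D(n)$ which freely adjoins a morphism whose differential is a prescribed closed morphism. The generating trivial cofibrations $J$ consist of the contractible disc inclusions $0\hookrightarrow D(n)$ together with one ``interval'' generator $\k_\pt\to \K$, where $\K$ is the weakly unital replacement of Kontsevich's contractible interval dg category constructed in Section~\ref{sectionkcat}.

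With these data in hand, the recognition theorem reduces the proof to four verifications: smallness of the domains in $I\cup J$ (automatic since $\Cat_\dgwu(\k)$ is locally presentable over a field); the identifications $I\text{-inj}=\Fib\cap W$ and $J\text{-inj}=\Fib$, where $W$ denotes the class of weak equivalences; and the assertion that every relative $J$-cell complex is a weak equivalence. The lifting identifications are largely formal once the structure of $\K$ is known to witness equivalences inside hom-complexes; the two-out-of-three and retract closure of $W$ reduce to the corresponding properties of quasi-isomorphisms of chain complexes and of equivalences of $H^0$-categories.

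The principal obstacle is the last verification: that pushouts and transfinite compositions of the generator $\k_\pt\to \K$ remain quasi-equivalences. In the strictly unital case~[Tab1], and in the idempotent weakly unital case of~[PS], the identity $\id_X\circ\id_X=\id_X$ permits an explicit description of such pushouts and an elementary hand check of the property. Without idempotence, infinitely many correction terms built from the weak unit structure enter the hom-complexes of the pushout, and a direct calculation becomes unmanageable. This is exactly where the two new constructions advertised in the introduction are used: the pre-triangulated hull of a wu dg category from Section~3, which is itself shown to remain weakly unital, supplies an ambient homotopical context in which the relevant pushouts can be analyzed up to quasi-isomorphism, while the careful design of $\K$ in Section~\ref{sectionkcat} ensures that the newly introduced weak identities interact correctly with those already present in the source category.
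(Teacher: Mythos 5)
Your scaffolding matches the paper's: the same classes $W$ and $\Fib$, essentially the same generating sets $I=\{Q,\beta(n)\}$ and $J=\{\kappa,\alpha(n)\}$, and a reduction to the recognition theorem [Ho, Th.\ 2.1.19] with the same list of verifications. However, you have inverted where the genuine difficulty sits, and as a result the two non-formal steps are both left without a working argument. First, the identification $J\text{-}inj=\Fib$ (Lemma \ref{Fibinj}) is \emph{not} ``largely formal once the structure of $\K$ is known''. Given a fibration $\phi\colon\C\to\D$ and a wu dg functor $F\colon\K\to\D$, axiom (F2) produces a homotopy equivalence $\eta$ in $\C$ lifting $\xi=F(f)$, but one must then lift the \emph{entire} contracting datum $(h_0,h_1,r)$, and there is no reason the contraction of $\Cone(\eta)$ supplied by (F2) maps to the one prescribed by $F$. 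The paper's route is: reinterpret a functor out of $\K$ as a closed morphism $\xi$ together with a contracting homotopy of $\Cone(\xi)$ in $\D^\pretr$ (Lemma \ref{kcontr}); invoke Lemma \ref{calculation} to show that any two contracting homotopies of a weak identity differ by a boundary $dt$; and use (F1) to lift $t$ and correct the contraction. Lemma \ref{calculation} is itself a delicate computation precisely because $\id_x\circ\id_x\neq\id_x$, and the definition of $\id_{\Cone(\xi)}$ with its correction term $\varepsilon=p_2(f,1_x)-p_2(1_y,f)$ from Section \ref{s.def} is what makes $\Cone(\xi)$ a legitimate object to contract. This is the step for which the weakly unital pretriangulated hull was built.

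Second, your account of $J\text{-}cell\subset W$ misattributes the tool: the pretriangulated hull plays no role there. The paper handles the pushout along $\kappa\colon A\to\K$ by the explicit operadic description of pushouts in $\Cat_\dgwu(\k)$ (Proposition \ref{propappc}), which exhibits the new hom-complexes as direct sums of tensor products involving $\overline{\K}=\K(0,0)/\k[0]$, and then concludes by the K\"{u}nneth formula once $\overline{\K}$ is known to be acyclic. That acyclicity is exactly Proposition \ref{firstbigthm}: the projection $\K\to\mathcal{K}$ is a weak equivalence, proven by a filtration/spectral-sequence argument that reduces to the quasi-isomorphism of dg operads $\mathcal{O}'\to\Assoc_+$ (Theorem \ref{compop}); note this cannot be read off directly from the operad comparison because $dr$ in \eqref{eqrelk} is a deformation of the strict formula \eqref{eqkstrict}. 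Your sketch contains neither the computation of the homotopy type of $\K$ nor the explicit pushout formula, and the phrase ``the pre-triangulated hull supplies an ambient homotopical context in which the relevant pushouts can be analyzed'' does not correspond to an argument. To complete your proof you would need to supply Proposition \ref{firstbigthm} and Proposition \ref{propappc} for the cell argument, and Lemmas \ref{kcontr} and \ref{calculation} for the lifting identification.
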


\begin{theorem}[proven in Prop. \ref{propqp} and Theorem \ref{quieq}]\label{theoremintro2}
There is 
a Quillen equivalence
$$
L\colon \Cat_\dgwu(\k)\rightleftarrows \Cat_\dg(\k): R
$$
where $\Cat_\dg(\k)$ is endowed with the Tabuada closed model structure [Tab1]. 
\end{theorem}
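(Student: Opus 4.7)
The plan is to break the statement into the two pieces indicated by its references: first establish that $(L,R)$ is a Quillen adjunction (Prop.~\ref{propqp}), and then upgrade this to a Quillen equivalence (Thm.~\ref{quieq}). The right adjoint $R\colon \Cat_\dg(\k)\to \Cat_\dgwu(\k)$ is the tautological inclusion that regards a strictly unital dg category as weakly unital with trivial higher coherences. The left adjoint $L$ is the strictification functor which imposes on a wu dg category $\C$ the relations that force every coherence operation $m^{1,1},\, m^{1,1,1},\dots$ attached to the identities to coincide with the corresponding strict composition of identities; one checks by a direct calculation that $L\dashv R$.

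For the Quillen pair one verifies that $R$ preserves fibrations and acyclic fibrations. Fibrations in $\Cat_\dgwu(\k)$, as produced by Theorem~\ref{theoremintro1}, are characterized by component-wise surjectivity on $\Hom$-complexes together with a lifting property for weak isomorphisms, exactly as in $\Cat_\dg(\k)$; since $R$ is a full embedding that neither alters morphism complexes nor introduces new objects, both conditions transfer formally. For the Quillen equivalence it is then enough to show that the derived unit $\eta_\C\colon \C\to RL\C$ is a quasi-equivalence for every cofibrant $\C\in \Cat_\dgwu(\k)$, and that the derived counit $\varepsilon_\D\colon LR\D\to \D$ is a quasi-equivalence for every $\D\in \Cat_\dg(\k)$. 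The counit is essentially trivial: when $\D$ is already strictly unital, the coherence data that $R\D$ nominally carries are forced to agree with the strict identities, so $L$ undoes the inclusion on the nose and $\varepsilon_\D$ is an isomorphism.

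The genuine difficulty is the unit. The strategy is to reduce the problem to the generating cofibrations of $\Cat_\dgwu(\k)$ via the usual cell-attachment machinery (cofibrant objects are retracts of transfinite compositions of pushouts of generators, and both $L$ and $R$ are readily seen to preserve the relevant filtered colimits). It is precisely here that the condition $\id_X^2=\id_X$ made the argument of [PS] almost mechanical and where the present work needs new input: one must track the behaviour of the strictification on the weakly unital replacement of the Kontsevich dg category $\mathcal{K}$ built in Section~\ref{sectionkcat}, and use the pre-triangulated hull of a wu dg category (Section~3) to compare the $\Hom$-complexes of $\C$ and $RL\C$. Once these two constructions are in place, the quasi-equivalence $\eta_\C$ reduces to verifying quasi-isomorphisms of explicit mapping complexes between the cells of $\C$ and their strictifications; this is the main technical obstacle I would anticipate, and the reason why the pre-triangulated hull and the wu replacement of $\mathcal{K}$ are advertised in the introduction as the two essentially new ingredients of the proof.
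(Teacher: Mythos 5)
Your overall architecture (adjunction, Quillen pair, then equivalence) matches the paper's, and your identification of $R$ as the tautological embedding and $L$ as the quotient killing the higher operations $p_n$, $n\ge 2$, is correct. But the heart of your argument for the derived unit is missing, and the tools you name in its place are the wrong ones. The paper's proof of Theorem \ref{quieq} does \emph{not} use the pre-triangulated hull or the wu replacement $\mathcal{K}'$ of the Kontsevich category at all; those are the new ingredients for establishing the model structure itself (specifically for Lemma \ref{Fibinj}, i.e.\ $Fib=J\text{-}inj$). What actually makes the comparison of $\Hom$-complexes between a cofibrant $X$ and its strictification $L_1X$ work is Theorem \ref{compop}: the projection of dg operads $\mathcal{O}'\to\Assoc_+$ is a quasi-isomorphism. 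Concretely, after reducing (as you propose) to the case where $X$ is an $I$-cell with graph of generators $V$, the paper compares the cones $N=\Cone\bigl(X(x,x')\to R_1Y(\cdot,\cdot)\bigr)$ and $M=\Cone\bigl(L_1X(x,x')\to Y(\cdot,\cdot)\bigr)$ via a map $\omega\colon N\to M$ whose cone is identified with $F_{\tilde{\mathcal{O}}}(V)(x,x')$, $\tilde{\mathcal{O}}=\Ker(\mathcal{O}'\to\Assoc_+)$; acyclicity of $\tilde{\mathcal{O}}$ plus K\"unneth then closes the argument. Without the operadic quasi-isomorphism your cell-by-cell reduction has nothing to land on: there is no a priori reason the free weakly unital dg category on a graph should have the same cohomology as the free unital one. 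This is the genuine gap.

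Two smaller points. First, your verification of the Quillen pair via ``$R$ preserves fibrations and acyclic fibrations'' is a legitimate alternative, but the paper instead applies Hovey's criterion on the left: $L_1$ sends the generating (acyclic) cofibrations $I$, $J$ of $\Cat_\dgwu(\k)$ precisely onto Tabuada's generating sets, which is essentially immediate from the definitions. Second, your treatment of the counit is too quick: for the derived counit you must evaluate $L$ on a \emph{cofibrant replacement} of $R\D$, and $R\D$ is not cofibrant in $\Cat_\dgwu(\k)$, so the observation $L_1R_1\D\cong\D$ does not by itself settle that direction. The paper sidesteps this by using the symmetric criterion (for cofibrant $X$ and fibrant $Y$, $LX\to Y$ is a weak equivalence iff its adjoint is), which again reduces everything to the single cone computation above.
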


\comment
Our second main result is an application of the developed theory, which actually motivated the project. 
We provide a direct approach to computation of the $\Hom$-sets $\Hot(C,D)$ (correspondingly, $\Hom$-simplicial sets $\Hot^\Delta(C,D)$) in the homotopy category $\Hot(\k)$ of the category $\Cat_\dg(\k)$. 

Let us outline the difficulties one meets here when one tries to compute these sets. It is well-known that, in order  to compute $\Hom$-sets in the homotopy category of a closed model category, one has to replace the first argument by its cofibrant resolution and the second argument by its fibrant resolution, and take the quotient-set of the corresponding $\Hom$-set by the equivalence relation, defined by maps of the first argument to a path object of the second argument. In $\Cat_\dg(\k)$, $\k$ a field, any object is fibrant, and Tabuada's paper [Tab2] provides a nice choice of the path object. Furthermore, a natural cofibrant replacement of a (unital) dg category $C$ is $\Cobar(\Bar(C))$, and here one meets the main trouble. The key point is that the dg category $\Cobar(\Bar(C))$ fails to be a unital dg category, though it is a KS weakly unital dg category. In this respect, the Quillen equivalence of Theorem \ref{theoremintro1} makes it possible to compute $\Hom$-sets in the homotopy category of $\Cat_\dgwu(\k)$. The next point is that $\Cobar(\Bar(C))$ ifails to be cofibrant in $\Cat_\dgwu(\k)$. We provide a new construction, in which adjust the cobar-construction to the weakly unital case. It gives rise to a cofibrant weakly unital dg category $R(C)=\FCobar(\Bar(C))$, which is a ``fattened version'' of $\Cobar(\Bar(C))$, and still is a resolution of $C$. We make use of this category $R(C)$ for our computation.

It gives rise to the following description of $\Hot(C,D)$. 

Recall (see Appendix A) that an $A_\infty$ functor $F\colon C\to D$ with Taylor coefficiants $\{F_i\}_{i\ge 1}$ is called {\it weakly unital}  if $F_1(\id_X)=\id_{F(X)}$ for any object $X\in C$.

Recall that, for two $A_\infty$ functors $F,G\colon C\to D$, one defines the complex of $A_\infty$ natural transformations, denoted by $\Coh_{A_\infty}(C,D)(F,G)$. 
An $A_\infty$ natural transformation $\Theta\colon F\Rightarrow G$ of degree $d$ is given by its Taylor components $\{\Theta_n\}_{n\ge 0}$, where $\Theta_n\in \Hoch^{n}(C,D^{}(F(-),G(=)))$ of total degree $d$. 
\endcomment

\comment
We call an $A_\infty$ natural transformation {\it 1-truncated} if $\Theta_i=0$ for $i\ge 2$. When $F,G$ are dg functors, a 1-truncated {\it closed} $A_\infty$ functor $\Theta$ of degree $0$ is given by a family of closed maps $\Theta_0(X)=D^0(F(X),G(X))_{X\in C}$ and a family of maps $\{\Theta_1(f)\in D^{-1}(F(X),G(Y))\}_{f\in C(X,Y)}$ such that
$$
d\Theta_1(f)- \Theta_1(df)=G(f)\circ \Theta_0(X)-\Theta_0(Y)\circ F(f)
$$
and
$$
\Theta_1(f_2)\circ F(f_1)-\Theta_1(f_2f_1)+G(f_2)\circ \Theta_1(f_1)=0
$$

\begin{theorem}[proven as ]\label{theoremintro2}
Let $C,D$ be two unital dg categories. Then the set $\Hot(C,D)$ is the quotient-set
$$
\{\text{weakly unital }A_\infty\text{ functors }C\to D\}/\sim
$$
where $F\sim G$ iff there is a closed degree 0 $A_\infty$ natural transformation $\Theta\colon F\Rightarrow G$, for which the closed degree 0 morphisms $\{\Theta_0(X)\in D(F(X),G(X))\}_{X\in C}$ are homotopy equivalences.\footnote{that is, induce an isomorphism in $H^0(D)$}
\end{theorem}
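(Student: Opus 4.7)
The plan is to leverage the Quillen equivalence $L\colon\Cat_\dgwu(\k)\rightleftarrows\Cat_\dg(\k)\colon R$ stated earlier to rewrite $\Hot(C,D)$ as a hom-set in the homotopy category of $\Cat_\dgwu(\k)$. Since $\k$ is a field, every object of $\Cat_\dg(\k)$ is fibrant [Tab1], and the right adjoint $R$ preserves fibrant objects, so $R(D)=D$ is still fibrant when regarded in $\Cat_\dgwu(\k)$. Combined with Theorem \ref{theoremintro1}, this lets me compute $\Hot(C,D)=[Q(C),D]_\dgwu$ for any cofibrant replacement $Q(C)$ of $C$ inside $\Cat_\dgwu(\k)$.

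The first step is to take $Q(C)=R(C)\defeq\FCobar(\Bar(C))$, the refined bar-cobar resolution alluded to in the introduction and constructed in Appendix B. By construction this is a wu dg category, the augmentation $R(C)\to C$ is a weak equivalence in $\Cat_\dgwu(\k)$, and $R(C)$ is cofibrant. The key universal property of $R(C)$ is that weakly unital dg functors $R(C)\to D$ are in natural bijection with weakly unital $A_\infty$ functors $C\to D$: this is the straightforward enhancement of the classical correspondence between dg functors $\Cobar(\Bar(C))\to D$ and $A_\infty$ functors $C\to D$, with the fattening $\Cobar\leadsto\FCobar$ built precisely so that the condition $F_1(\id_X)=\id_{F(X)}$ becomes part of the dg functor data. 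This identifies the underlying set of the right-hand side of the claim with $\Hom_{\Cat_\dgwu(\k)}(R(C),D)$.

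The second step is to identify the equivalence relation. Using Tabuada's explicit path object $D^I$ in $\Cat_\dg(\k)$ [Tab2] (transported along $R$ to a path object in $\Cat_\dgwu(\k)$), I would unpack what a right homotopy $H\colon R(C)\to D^I$ between two weakly unital dg functors $F,G\colon R(C)\to D$ amounts to. Unwinding through the $\FCobar$-$\Bar$ correspondence, such an $H$ is equivalent data to a closed degree-$0$ $A_\infty$ natural transformation $\Theta\colon F\Rightarrow G$ with Taylor components $\{\Theta_n\}_{n\ge 0}$; the endpoint projections $D^I\rightrightarrows D$ pin $\Theta$ down to a genuine ``path'' between $F$ and $G$, which in degree $0$ forces $\Theta_0(X)\in D^0(F(X),G(X))$ to be closed and to become invertible in $H^0(D)$, that is, a homotopy equivalence. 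Conversely, a $\Theta$ satisfying these conditions lifts to an $H$, because invertibility of $\Theta_0(X)$ in $H^0(D)$ is exactly what allows the path object's two endpoints to be matched.

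The main obstacle I anticipate is the bookkeeping that pins down $\{\Theta_n\}_{n\ge 2}$ and verifies the $A_\infty$ naturality equations (including $d\Theta_1(f)-\Theta_1(df)=G(f)\circ\Theta_0(X)-\Theta_0(Y)\circ F(f)$ and its higher analogues) from the structure of $D^I$; I would treat this via the convolution/twisting-cochain interpretation, viewing maps out of $R(C)=\FCobar(\Bar(C))$ as Maurer-Cartan elements in a convolution dg algebra and reading the equations for $\Theta$ off the cylinder structure. That $\sim$ really is an equivalence relation, in particular transitive, follows formally from Theorem \ref{theoremintro1}: right-homotopy between maps out of a cofibrant object into a fibrant object is automatically an equivalence relation, and $R(C)$ is cofibrant while $D$ is fibrant. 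Assembling these pieces yields the claimed bijection of $\Hot(C,D)$ with the quotient set described in the statement.
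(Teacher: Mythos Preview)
The statement you are attempting to prove does not actually appear in the final paper: it lives entirely inside a \texttt{\textbackslash comment ... \textbackslash endcomment} block (the paper defines \texttt{\textbackslash comment} to discard its contents). The same is true of the $\FCobar$ construction you invoke, the ``fattened cobar-bar resolution'' $R(C)=\FCobar(\Bar(C))$, and the entire application section on $\Hot(C,D)$ and $\Hot^\Delta(C,D)$: all of this material was commented out, and the paper as it stands contains no proof of this result. The label \texttt{theoremintro2} in the published text refers to the Quillen equivalence statement, not to this one. So there is no paper proof to compare against.

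That said, your outline matches the approach sketched in the commented-out passages: pass to $\Cat_\dgwu(\k)$ via the Quillen equivalence, replace $C$ by a cofibrant wu dg category whose maps to $D$ are exactly weakly unital $A_\infty$ functors, and read off the equivalence relation from Tabuada's path object [Tab2]. Two cautions. First, Appendix~B constructs a \emph{strictly unital} cofibrant resolution $C(A)=\Cobar_+(\widehat{\Bar}(A))$, not the weakly unital $\FCobar(\Bar(C))$ you cite; the latter is announced in the commented material but never written down, so you cannot simply appeal to it. Second, the step ``such an $H$ is equivalent data to a closed degree-$0$ $A_\infty$ natural transformation $\Theta$'' is where the real work lies: Tabuada's path object $D^I$ has as objects homotopy equivalences in $D$, and unwinding a map $R(C)\to D^I$ produces the components $\Theta_n$, but verifying that the resulting correspondence is a bijection compatible with the $A_\infty$ equations is not automatic and is precisely the computation the authors left undone when they excised this section.
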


A similar description for the simplicial set $\Hot^\Delta(C,D)$ is given in Theorem ???.

\vspace{2mm}

To the best of our knowledge, the existing computation is based on the description by Toen [To] of the homotopy category $\Hot(\k)$ via quasi-functors.
Faonte's recent paper [Fa] proves that Toen's internal $\Hom$ dg category $\RRHom(C,D)$ is quasi-equivalent to the dg category $\Coh_{A_\infty}(C,D)$ whose objects are strongly unital $A_\infty$ functors $C\to D$, and whose morphisms are $A_\infty$ natural transformations. Taken together, these two results can be used to get a description similar to ours'.

Indeed, one can show that $\Hot(C,D)=\pi_0(N(\Iso(H^0(\RRHom(C,D)))))$ where $\Iso(-)$ stand for the ordinary category whose morphisms are degree 0 isomorphisms of its argument, and $N(-)$ stands for the nerve of an ordinary category.

However, [To] is based on far more involved homotopy theory, and, in our opinion, it is beneficial to have a direct approach. 

\endcomment
\subsection{} 
Here we outline in more detail our results and the organisation of the paper.  

In Section 2, we recall the main definitions from [PS], and the results from loc.cit., used here. These are small completeness and small cocompleteness of $\Cat_\dgwu(\k)$, and the monadic description of it. Namely, there is a monad on the category of unital dg graphs $\Graphs_\dgu(\k)$, such as the algebras over this monad are the weakly unital dg categories. Moreover, this monad is associated with a non-symmetric dg operad, called $\mathcal{O}^\prime$. Here one of the main results is cited from loc.cit. as Theorem \ref{compop} here. It says that a natural map gives a quasi-isomorphism of dg operads $\mathcal{O}^\prime\to \Assoc_+$ to the operad of unital dg algebras. This computation is essentially used in the proof that the Quillen pair of Prop. \ref{propqp} is a Quillen equivalence. This Section makes it possible reading this paper independently on [PS]. 

In Section \ref{s.def}, we prove that to any weakly unital dg-category $C$ we can functorially associate the pretriangulated hull of it $C^{\pretr}$, and this non unital dg-category is in turn  weakly unital. 
Here the main point, which makes the problem non-trivial, is that, for a closed morphism $f$ in $C(x,y)$, the identity morphism $\id_{\Cone(f)}$ within the ``naive'' definition, fails to be closed, if $\id_y\circ f\ne f\circ \id_x$. 
This construction comes out naturally when one perturbs the naive construction making $\id_{\Cone(f)}$ closed.  This tool allows us to work with cones of any closed morphism in $C$, and these are  key points for crucial  Lemma \ref{kcontr} and Lemma \ref{Fibinj}.

In Section \ref{s.mc}, we construct a closed model structure on $\Cat_\dgwu(\k)$. Here the strategy is rather closed to [Tab1] and [PS], though the generating cofibrations $I$ and generating acyclic cofibrations $J$ should be re-defined to the weakly unital context, making them ``fattened''. This ``fattening'' goes straightforwardly for all generating (acyclic) cofibrations except to the one in which the Kontsevich dg category $\mathcal{K}$ is involved. The refined wu version of $\mathcal{K}$ is not straightforward (see Section \ref{sectionkcat}, and \eqref{eqrelk}). It is a dg category which classifies a homotopy equivalence in a wu dg category (Lemma \ref{lem.kon}). We also prove crucial Lemma \ref{kcontr}, connecting $\mathcal{K}$ and weakly unital pre-triangulated dg hull. This Lemmas are used in the proof of key-lemma Lemma \ref{Fibinj}, which completes the story.

In Section \ref{s.quieq}, we establish a Quillen equivalence between $\Cat_\dgwu(\k)$ and $\Cat_\dg(\k)$. Here we essentially use Theorem \ref{compop}, saying that the dg operad $\mathcal{O}^\prime$ is quasi-isomorphic $\Assoc_+$. 

Finally, Section 6 contains a computational proof of Proposition \ref{propph}. 

We supply the paper with three Appendices. In Appendix A, we just fix signs in the identities on the Taylor coefficients of an $A_\infty$ morphism. Appendix B contains a (seemingly new but elementary) construction of a unital cofibrant resolution of a unital dg algebra, easily generalizing to the case of dg categories. Appendix C details a computation of particular coequalizers, following from the general discussion in [PS]; this computation is used in the proof of Theorem \ref{quieq}.

\subsection*{\sc Acknowledgements}
We are thankful to Bernhard Keller for his encouragement. 
The work of both authors was partially supported by the FWO Research Project Nr. G060118N.
The work of B.Sh. was partially supported by the HSE University Basic Research Program and Russian Academic Excellence Project '5-100'.

\section{\sc Weakly unital dg categories}
\subsection{}\label{s.wudef}
Recall the definition of a weakly unital dg category [KS, 4.2].

Let $A$ be a non unital dg category. Denote by $A\oplus \Bbbk_{A}$ the strictly unital dg category where $Ob(A\oplus \Bbbk_{A}) = Ob(A)$ and
\[ \Hom_{A\oplus \Bbbk_{A}}(x,y) = \begin{cases}  \Hom_{A}(x,y) &  \text{ if } x \neq y\\ 
 \Hom_{A}(x,x)\oplus \Bbbk 1_x &  \text{ if } x=y. \end{cases} 
              \]
One has a natural imbedding $i: A \rightarrow A\oplus \Bbbk_{A}$, sending $x$ to $x$, and $f \in A(x,x)$ to the pair $(f,0) \in (A\oplus \Bbbk_{A})(x,x)$. We denote by $1_x$ the generator of $\k_x$.
\begin{defn}\label{wudef}{\rm
A \emph{weakly unital dg category} $A$ over $\Bbbk$ is a non-unital dg category $A$ over $\Bbbk$ with a distinguished closed element $\id_x \in A^{0}(x,x)$ for any object $x$ in $A$, such that there exists an $A_{\infty}$-functor $p: A\oplus \Bbbk_{A} \rightarrow A$ which is the identity on the objects, such that $p \circ i = \id_{A}$, $p_1(1_x) = \id_x, \forall x \in Ob(A)$, and $p_n(f_1,\dots,f_n)=0$ for $n\ge 2$ and all $f_i$ morphisms in the image $i(A)$.}
\end{defn}

\begin{defn}\label{wudeffunctor}{\rm 
Let $A, C$ be two weakly unital dg categories, with the structure maps $p^{A}: A\oplus \Bbbk_{A} \rightarrow A$ and $p^{C}: C\oplus \Bbbk_{C} \rightarrow C$.   
A \emph{weakly unital dg functor} $F: A \rightarrow C$ is a non unital dg functor $F\colon A\to C$ such that the following diagram commutes:
\begin{equation}
\begin{tikzpicture}[baseline= (a).base]
\node[scale=1.3] (a) at (5,5){
	\begin{tikzcd}
	A\oplus \Bbbk_{A} \arrow[d, "p^{A}"'] \arrow[r, "F\oplus\Bbbk_{F} "] & C\oplus \Bbbk_{C} \arrow[d, "p^{C}"]     \\
	A \arrow[r, "F"]   & C                                                   
	\end{tikzcd}
};
\end{tikzpicture}
\end{equation}
}
\end{defn}   

In this way, we define the category $\Cat_\dgwu(\k)$. Its full subcategory, for which $\id_x\circ \id_x=\id_x$ for any object $x$, is denoted by $\Cat_\dgwu^0(\k)$.

It follows from the definition that:
\begin{equation}
\begin{aligned}
F(\id_x) &= \id_{F(x)} &\forall x \in \Ob(A) \\
F(p^{A}_n(f_1, .., f_n)) &= p^{\C}_n(F(f_1), .., F(f_n))  & f_i \in A\oplus\k_A, i=1\dots n
\end{aligned}
\end{equation}    

\begin{example}\label{exwusu}
	\rm{
Let $C$ be a strictly unital dg category. We denote by $i(C)$ the weakly unital dg category, for which the $A_\infty$ functor $p\colon C\oplus\k_C\to C$ is a natural dg functor, so that all $p_n^{i(C)}, n\ge 2$ are equal to 0. In this way we get a fully-faithful embedding $i\colon \Cat_\dg(\k)\to\Cat_\dgwu(\k)$.	
}
\end{example}

This definition is due to Kontsevich-Soibelman loc.cit. 
There are two other definitions, due to Lyubashenko and Fukaya, correspondingly. We refer the reader to [Ly], [LyMa]  for discussion of all three definitions, and to [COS] for interplay between them.

An advantage of the Kontsevich-Soibelman definition is that the category of small KS wu dg categories carries a closed model structure, unlike for the two other definitions. Such closed model structure was provided in [PS] for $\Cat_\dgwu^0(\k)$, and is done here for the general case of  $\Cat_\dgwu(\k)$, see Theorem \ref{bigtheorem} below. 

We recall the definition of an  $A_\infty$ morphism and recall the signs in the $A_\infty$ identities in Appendix A, see \eqref{ainftysigns}.

Here we write down, for further reference, formulas for $dp_2$ and $dp_3$, coming from \eqref{ainftysigns}.

\begin{equation}\label{eqp2}
dp_2(f,1_x)+p_2(df,1_x)=f-f\circ \id_x, \hspace{4mm}dp_2(1_x,f)+p_2(1_x,df)=f-\id_x\circ f
\end{equation}  

\begin{equation}\label{eqp3}
\begin{aligned}
\ & dp_3(f,g,1_x)-(-1)^{|g|}p_3(df,g,1_x)-p_3(f,dg,1_x)=f\circ p_2(g,1_x)-p_2(f\circ g,1_x)\\
&dp_3(f,1_x,g)-(-1)^{|g|}p_3(df,1_x,g)-p_3(f,1_x,dg)=-(-1)^{|g|} p_2(f,1_x)\circ g+ f\circ p_2(1_x,g)\\
&dp_3(1_x,f,g)-(-1)^{|g|}p_3(1_x,df,g)-p_3(1_x,f,dg)=-(-1)^{|g|}p_2(1_x,f)\circ g+p_2(1_x,f\circ g)\\
&dp_3(1_x,1_x,f)-p_3(1_x,1_x,df)=\id_x\circ p_2(1_x,f)-(-1)^{|f|}p_2(1_x,1_x)\circ f \\
&dp_3(1_x,f,1_x)-p_3(1_x,df,1_x)=\id_x\circ p_2(f,1_x)-p_2(1_x,f)\circ \id_x-p_2(f,1_x)+p_2(1_x,f)\\
&dp_3(f,1_x,1_x)-p_3(df,1_x,1_x)=f\circ p_2(1_x,1_x)   -p_2(f,1_x)\circ \id_x  \\
&dp_3(1_x,1_x,1_x)=\id_x\circ p_2(1_x,1_x)-p_2(1_x,1_x)\circ \id_x
\end{aligned}
\end{equation}

\subsection{\sc The monadic structure and the dg operad $\mathcal{O}^\prime$}
Here we recall some results of [PS] which we use in this paper. 

With the concept of a weakly unital dg category is associated a monad acting on the category of oriented dg graphs such that a weakly unital dg category amounts to the same thing as an algebra over this monad. This description is used in loc.cit. to prove that the category $\Cat_\dgwu(\k)$ is small complete and small cocomplete. The monad itself is defined via a non-symmetric dg operad called $\mathcal{O}^\prime$. Here we briefly recall the corresponding definitions and results. 

An oriented dg graph $\Gamma$ over $\k$ is given by a {\it set} $V_\Gamma$ of vertices, and a complex $\Gamma(x,y)$ for any ordered pair $x,y\in V_\Gamma$. A morphism $F\colon \Gamma_1\to\Gamma_2$ is given by a map of sets $F_V\colon V_{\Gamma_1}\to V_{\Gamma_2}$, and by a map of complexes $F_E\colon \Gamma_1(x,y)\to \Gamma_2(F_V(x),F_V(y))$, for any $x,y\in V_{\Gamma_1}$. We denote by $\Graphs_\dg(\k)$ the category of the dg graphs over $\k$. A {\it unital} dg graph $\Gamma$ is an oriented dg graph such that there is an element $\id_x\in \Gamma(x,x)$, closed of degree 0, for any $x\in V_\Gamma$. A map of unital dg graphs is a map $F$ of the underlying dg graphs such that $F(\id_x)=\id_{F(x)}$, for any $x\in V_\Gamma$. We denote by $\Graphs_\dgu(\k)$ the category of unital dg graphs over $\k$. 

There is a natural forgetful functor $U\colon \Cat_{dgwu}(\k)\to \Graphs_\dgu(\k)$, where $U(C)$ is a graph $\Gamma$ with $V_\Gamma=\Ob(C)$, and $\Gamma(x,y)=C(x,y)$.

This functor admits a left adjoint $F\colon \Graphs_\dgu(\k)\to \Cat_\dgwu(\k)$. It is constructed via a dg operad $\mathcal{O}^\prime$. 

Consider the non-$\Sigma$ the dg operad $\mathcal{O}$ define as the quotient-operad of the free operad generated by the composition operations:
\begin{itemize}
	\item[(a)] the composition operation $m\in\mathcal{O}^\prime(2)^0$
	\item[(b)] $p_{n; i_1,\
		\dots,i_k}\in \mathcal{O}^\prime(n-k)^{-n+1}$, $0\le k\le n$, $1\le i_1<i_2<\dots<i_k\le n$, with the following meaning:
	For a weakly unital dg category $C$, the operation $p_{n; i_1,\dots,i_k}(f_1,\dots,f_{n-k})$ is defined as 
	\begin{equation}\label{relop}
	p_n\big(f_1,\dots,f_{i_1-1}, \underset{i_1}{1_{x_1}},f_{i_1},\dots,f_{i_2-2},\underset{i_2}{1_{x_2}},f_{i_2-1},\dots,f_{i_3-3},\underset{i_3}{1_{x_3}},\dots\dots,\underset{i_k}{1_{x_k}},f_{i_k-k+1},\dots,f_{n-k}\big)
	\end{equation}
	where by $1_{x_i}$s are denoted the morphisms $1_{x_i}\in \k_C$ for the corresponding objects $x_i\in C$.
\end{itemize}
by the following relations:
\begin{equation}\label{orel}
\begin{aligned}
\ &(i) \text{ the associativity of  $m$, and $dm=0$}\\
&(ii)\  p_{n; i_1,\dots,i_k}=0 \text{ if $k=0$}\\
&(iii)\  p_{1; -}=\id\\
&(iv)\  \text{the $A_\infty$ morphism relation for }
dp_{n; i_1,\dots,i_k}
\end{aligned}
\end{equation}
\comment
Note that (ii) formally follows from the part of (iii), saying that $p_{n;1,2,\dots,n}=0$, and (v).
\endcomment
We use the notation $j=p_{1,1}$, the degree zero 0-ary operation generating the weak unit. It follows from (iv) that $dj=0$. The reader is referred to [PS, Sect. 1.2.3] for the precise form of the relation in (iv). It expresses the relations like \eqref{eqp2}, \eqref{eqp3} in the operadic terms, using the correspondence \eqref{relop}.

Morally, the dg operad $\mathcal{O}^\prime$ comprises all universal operations one can define on a weakly unital dg category.

Denote by $\Assoc_+$ the operad of unital associative $\k$-algebras.
We proved the following theorem:
\begin{theorem}\label{compop}
The natural map of dg operads $\mathcal{O}^\prime\to\Assoc_+$,
sending all $p_{n; i_1,\dots,i_k}$, $n\ge 2$, to 0, sending $j=p_{1;1}$ to 	the 0-ary unit generating operation, and sending $m$ to $m$, is a quasi-isomorphism.
\end{theorem}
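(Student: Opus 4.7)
The plan is to prove the map is a quasi-isomorphism arity by arity. Since $\Assoc_+(n)=\k$ is concentrated in degree $0$, it suffices to show $H^*(\mathcal{O}^\prime(n))\cong\k$ in degree $0$ for each $n\ge 0$, which I would establish via a spectral sequence associated to a filtration by the number of $p$-vertices in tree expressions.

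A spanning set of $\mathcal{O}^\prime(n)$ consists of planar rooted trees with $n$ leaves whose internal vertices are labeled by $m$, by $j$, or by some $p_{N;i_1,\ldots,i_k}$ with $N\ge 2$. By relation (iv) in \eqref{orel}, $dp_{N;i_1,\ldots,i_k}$ splits into two kinds of contributions: type~(a) terms of the form $p_{N-1;\ldots}(\ldots)$ arising from applying the internal product on $A\oplus\k_A$ to two adjacent slots (these preserve the total $p$-vertex count), and type~(b) Stasheff product terms $m(p_a,p_b)$ with $a+b=N$ (these strictly increase the $p$-vertex count). Let $F^s\mathcal{O}^\prime(n)$ be the span of trees with at least $s$ $p$-vertices; this is a decreasing filtration preserved by $d$. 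Because each $p$-vertex has negative cohomological degree while $m$ and $j$ have degree $0$, in each cohomological degree $d$ one has $s\le -d$, so the filtration is bounded in each degree and the associated spectral sequence converges strongly.

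On the associated graded $\gr^s=F^s/F^{s+1}$, only type-(a) contributions survive, so the induced differential $\bar d$ acts on each $p$-vertex purely through the linear part of the $A_\infty$-morphism relation. This decomposes $\gr^s$ as a direct sum, indexed by the placements of $p$-vertices in skeleton trees of $m$'s and $j$'s, of tensor factor complexes associated to each $p$-vertex. Each such factor is the chain complex of operations $p_{N;i_1,\ldots,i_k}$ of fixed arity under the linear Stasheff-type differential, and is acyclic for $s\ge 1$ via an explicit contracting homotopy (for instance, inserting an additional unit slot at a distinguished position such as immediately before the first input of the chosen $p$-vertex). Consequently $E_1\cong\gr^0$, the free non-symmetric operad generated by $m$ (modulo associativity) together with a free $0$-ary element $j$, concentrated in cohomological degree $0$.

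The subsequent page differentials now realize the strict unit axioms at the level of cohomology: the coboundary of $p_{2;1}$ equals $\id-m(j,\cdot)$ by \eqref{eqp2}, so $m(j,\cdot)=\id$ on $E_2$, and symmetrically $m(\cdot,j)=\id$ from $p_{2;2}$, while $dp_{2;1,2}=j-m(j,j)$ enforces $m(j,j)=j$ in arity $0$. All further differentials are formal consequences of these axioms, so $E_\infty(n)\cong\Assoc_+(n)=\k$. The main technical obstacle I anticipate is the explicit construction of the contracting homotopy on $\gr^s$ for $s\ge 1$: it must be chosen consistently across all arities and tree shapes, and one must verify compatibility with the decomposition by $p$-vertex placements; once this is done, the spectral sequence argument yields the claimed quasi-isomorphism.
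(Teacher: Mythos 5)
The paper itself does not prove Theorem \ref{compop} here: it is imported from [PS, Theorem 1.13], where the proof is described as a long computation with several spectral sequences. So there is no in-text proof to compare against; but your proposal has a concrete defect that can be identified independently. The decreasing filtration $F^s$ by the number of $p$-vertices is not stable under the differential, so the spectral sequence you want to use does not exist. The failure sits exactly at the arity-two generators: by \eqref{eqp2}, the operadic differential of $p_{2;2}$ (the unary operation $f\mapsto p_2(f,1_x)$) is the operadic identity minus the tree $f\mapsto f\circ \id_x$ built from one $m$-vertex and one $j$-vertex, and similarly $dp_{2;1}$ contains the identity, while $dp_{2;1,2}=j-m(j,j)$. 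Each of these differentials has a summand with strictly fewer $p$-vertices than the generator itself, so $d(F^1)\not\subseteq F^1$. Your dichotomy ``type (a) preserves the count, type (b) increases it'' is correct only for $N\ge 3$; the $N=2$ generators, which are precisely the ones encoding the homotopy-unit axioms, violate it. Reversing the filtration does not help (the quadratic terms $m(p_a,p_b)$ with $a,b\ge 2$ then break it in the other direction), nor does counting $j=p_{1;1}$ as a $p$-vertex (the identity summand of $dp_{2;i}$ still has none).

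There is also an internal inconsistency that signals the same problem from the other side. If, as you assert, $E_1$ were the operad generated by $m$ and $j$ modulo associativity, concentrated in cohomological degree $0$, then every differential $d_r$ with $r\ge 2$ would vanish for degree reasons (it raises total cohomological degree by one, and there would be nothing in degree one to hit), so $E_\infty$ would equal this $E_1$, which is infinite-dimensional in every arity and certainly not $\Assoc_+(n)=\k$. The relations $m(j,-)=\id$, $m(-,j)=\id$, $m(j,j)=j$ that you expect the ``later pages'' to impose are carried exactly by the filtration-violating components of $dp_{2;\cdot}$, so they can never appear as higher differentials of a spectral sequence attached to this filtration. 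A secondary issue: even where the filtration makes sense, the induced differential on the associated graded is not the purely ``linear Stasheff'' one, since the splitting terms with $a=1$ or $b=1$ also preserve the $p$-count while inserting new $m$- and $j$-vertices, so $\gr^s$ does not decompose into the tensor-product complexes you describe and the proposed contracting homotopy has no well-defined domain. The general strategy (arity-wise reduction plus filtrations) is in the spirit of [PS], but any workable filtration must treat the operations $p_{2;\cdot}$ together with $m$ and $j$ rather than as part of an independently contractible piece; as written, the argument does not get off the ground.
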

The proof is a rather long and tricky computation with different spectral sequences, see [PS, Theorem 1.13]. 

\qed

The left adjoint functor $F\colon \Graphs_\dgu(\k)\to\Cat_\dgwu(\k)$ is defined in two steps, as follows. Given a unital dg graph $\Gamma$, consider the free $\mathcal{O}^\prime$-algebra $T_{\mathcal{O}^\prime}(\Gamma)$, generated by $\Gamma$ (see [PS, Sect. 1.2.3]). It is a weakly unital dg category with objects $V_\Gamma$. The 0-ary operation $j$ generates an element $j_x\in T_{\mathcal{O}^\prime}(x,x)$, for any $x\in V_\Gamma$. 
After that, define $F(\Gamma)$ as the dg quotient-category
\begin{equation}
F(\Gamma)=T_{\mathcal{O}^\prime}(\Gamma)/(j_x-\id_x, x\in V_\Gamma)
\end{equation}
In this way, we identify $\id_x\in \Gamma(x,x)$ with the ``weak unit'' $j_x$ generated by $\mathcal{O}^\prime$.

One has:
\begin{prop}
There is an adjunction:
\begin{equation}
\Cat_\dgwu(F(\Gamma),C)\simeq \Graphs_\dgu(\Gamma, U(C))
\end{equation}
\end{prop}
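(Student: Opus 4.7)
The plan is to derive the adjunction from the universal property of the free $\mathcal{O}^\prime$-algebra combined with the quotient that defines $F(\Gamma)$. The starting point is the standard operadic adjunction
$$
\mathrm{Alg}_{\mathcal{O}^\prime}(T_{\mathcal{O}^\prime}(\Gamma), B) \simeq \Graphs_\dg(\Gamma, U'(B)),
$$
natural in the (non-unital) dg graph $\Gamma$ and the $\mathcal{O}^\prime$-algebra $B$, where $U'$ forgets to non-unital dg graphs. By the monadic description recalled in this section, $\mathrm{Alg}_{\mathcal{O}^\prime}$ coincides with $\Cat_\dgwu(\k)$ and its morphisms are exactly the weakly unital dg functors.

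Given $\phi \in \Graphs_\dgu(\Gamma, U(C))$, I would forget the unital structure on $\Gamma$ and use the universal property to produce an $\mathcal{O}^\prime$-algebra morphism $\tilde\phi \colon T_{\mathcal{O}^\prime}(\Gamma) \to C$. The core check is that $\tilde\phi(j_x) = \tilde\phi(\id_x)$ for each $x \in V_\Gamma$: on the one hand $\tilde\phi(j_x) = j_{\phi(x)} = \id_{\phi(x)}$, since $\tilde\phi$ preserves the $0$-ary operation $j$ and since in any weakly unital dg category $j$ produces the specified identity; on the other hand $\tilde\phi(\id_x) = \phi(\id_x) = \id_{\phi(x)}$, because $\phi$ is a morphism of \emph{unital} dg graphs. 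Hence $\tilde\phi$ factors uniquely through $F(\Gamma) = T_{\mathcal{O}^\prime}(\Gamma)/(j_x - \id_x)$, producing the desired $\bar\phi \colon F(\Gamma) \to C$ in $\Cat_\dgwu(\k)$.

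In the opposite direction, I would send $\psi \colon F(\Gamma) \to C$ to $U(\psi) \circ \eta_\Gamma$, where $\eta_\Gamma \colon \Gamma \to U(F(\Gamma))$ is the inclusion of generators; this $\eta_\Gamma$ is a map of unital dg graphs precisely because in $F(\Gamma)$ the distinguished element $\id_x$ has been identified with the operadic weak unit $j_x$. That these two assignments are mutually inverse follows from the uniqueness clause in the universal property of $T_{\mathcal{O}^\prime}(\Gamma)$ together with the surjectivity of the quotient map $T_{\mathcal{O}^\prime}(\Gamma) \twoheadrightarrow F(\Gamma)$, and naturality in $\Gamma$ and $C$ is inherited from the same source.

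The one step that is not purely categorical formalism is the well-definedness of $F(\Gamma)$ as an object of $\Cat_\dgwu(\k)$, i.e.\ verifying that the relation $j_x - \id_x$ is compatible with all the operadic operations so that the quotient inherits an $\mathcal{O}^\prime$-algebra structure. I expect this to be the main (and essentially only) obstacle, but it is handled by the general monadic framework from [PS]; once it is in place, the rest of the argument is a formal consequence of the universal property.
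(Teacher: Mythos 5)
Your proposal is correct and is essentially the argument the paper has in mind: the paper states this proposition without proof (referring to the monadic framework of [PS]), and the intended justification is exactly the free-$\mathcal{O}^\prime$-algebra universal property combined with the observation that a unital graph morphism forces $\tilde\phi(j_x)=\tilde\phi(\id_x)$, so that $\tilde\phi$ factors through the quotient $F(\Gamma)=T_{\mathcal{O}^\prime}(\Gamma)/(j_x-\id_x)$. Your identification of the only non-formal point (that the quotient by the ideal generated by the $j_x-\id_x$ inherits an $\mathcal{O}^\prime$-algebra structure, hence is a weakly unital dg category) is also the right one, and it is indeed covered by the general construction of quotients of operad algebras in [PS].
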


\qed

Note that for $\Gamma$ a (non-unital) dg graph, one defines a unital dg graph $\Gamma_+$, formally adding $\k_x$ to $\Gamma(x,x)$, for any $x\in V_\Gamma$. Then

$$
F(\Gamma_+)\simeq T_{\mathcal{O}^\prime}(\Gamma)
$$

\section{\sc The pretriangulated hull of a weakly unital dg category}\label{s.def}
Recall that the {\it pretriangulated hull} of a dg category $C$ was introduced by Bondal-Kapranov [BK] (see also [Dr, 2.4, Remark]).
A dg category $C$ is {\it pretriangulated} if $H^0(C)$ is triangulated. Explicitly, it means that the functors $Z\to \underline{\Hom}(C(Z,X)\xrightarrow{f}C(Z,Y))$ and $Z\to C(Z,X)[n]$ defined 
for any closed morphism $f\colon X\to Y$ in $C$ and for any object $X\in C$, $n\in\mathbb{Z}$, correspondingly, are representable. In this case, the representing objects are $\Cone(f)$ and $X[n]$. 

The pretriangulated hull $C^\pretr$ of a dg category $C$ is a pretriangulated dg category with a dg functor $C\to C^\pretr$, which is universal for dg functors from $C$ to pretriangulated dg categories [BK]. 

Explicitly, it is constructed as follows. An object of $C^\pretr$ is a ``one-sided twisted complexes'', which are formal finite sums $(\oplus_{i=1}^n X_i[r_i]), q)$ where $q$ has components $q_{ij}\in C^{r_i-r_j+1}(X_i,X_j)$, which are zero for $i\ge j$, such that $dq+q^2=0$. Let $X=(\oplus X_i[r_i], q)$, $X^\prime=(\oplus X_i^\prime[r_i^\prime], q^\prime)$ be two objects of $C^\pretr$, a morphism $\phi\in C^\pretr(X,X^\prime)$ of degree $k$ is defined as the collection $\phi_{ij}\colon X_i[r_i]\to X^\prime_j[r_j^\prime]$ of degree $d$ (in general, non-zero for any $i,j$), and 
$d\phi=d_C\phi+q^\prime\circ \phi-(-1)^k\phi\circ q$. The composition is defined as the matrix product. 

The dg functor $C\to C^\pretr$ is defined on objects as $X\mapsto (X[0],q=0)$, and on morphisms accordingly. We recall that $\Cone(f)\in C^\pretr$, $f\colon X\to Y$ a closed morphism in $C$, is defined as $Cone(f)=(X\oplus Y[-1], q=f)$ (that is, $q_{12}=f$, $q_{11}=q_{22}=q_{21}=0$).

We want to define the pretriangulated hull of a {\it weakly unital} dg category, which is a weakly unital dg category as well. If we just repeated the definition given above, we would experience the following problem. Let $f\in C(x,y)$ be a closed morphism, we do {\it not} assume that $f\circ \id_x=f$ or $\id_y\circ f=f$. 
We $\Cone(f)=(X\oplus Y[-1],f)$, there should be a weak identity morphism $\id_{\Cone(f)}$, which is a closed morphism of degree 0. A natural candidate is given by $\id_X\colon X\to X, \id_Y\colon Y\to Y$. But then $d(\id_{\Cone(f)})=f\circ \id_X-\id_Y\circ f\ne 0$.

We remedy this problem as follows. 
For a closed morphism $f$ in $C$, define $\Cone(f)=(X\oplus Y[1], f))$ as above, but re-define $\id_\Cone(f)$. Namely, {\it define} $\id_\Cone(f)$ as having 3 non-zero components:
\begin{equation}\label{eqidcone}
\begin{aligned}
\ &\id_{\Cone(f)}=(\id_X,\id_Y,\varepsilon\in C^0(X,Y[1]))\\
&\hspace{7mm}\text{where  }\varepsilon=p_2(f,1_x)-p_2(1_y,f)
\end{aligned}
\end{equation}
where $p_2$ is the second Taylor component of the $A_\infty$ morphism $p\colon C\oplus \k_C\to C$, see Definition \ref{wudef}.

Then one has:
\begin{equation}
d(\id_{\Cone(f)})=f\circ \id_X-\id_Y\circ f+d\varepsilon=0
\end{equation}
(Recall that $dp_2(f,1_x))=p_1(f)-f\circ \id_X=f-f\circ \id_X$, and similarly for $dp_2(1_y,f)$). 

Thus, at the first step we define, inspired by this example, the identity morphism $\id_X$, for $X=(\oplus X_i[r_i], q_{ij})$, and check that $d(\id_X)=0$. After that, we construct an $A_\infty$ functor $P\colon C^\pretr\oplus\k_{C^\pretr}\to C^\pretr$, making $C^\pretr$ a weakly unital dg category. 

(We denote by $p$ the structure $A_\infty$ functor for $C$, and by $P$ the structure $A_\infty$ functor for $C^\pretr$). 

\begin{defn}[Pretriangulated hull of a wudg category, I]\label{defwuph1}
	{\rm Let $C$ be a wudg category. We define the underlying non-unital dg category of pretriangulated hull of ${C}$ as in the strictly unital case:
	\begin{itemize}
	\item[(a)] objects are formal expressions $(\bigoplus_{i=1}^{n} X_i[r_i], q_{ij})$, where $X_i \in \mathcal{C}$, $r_i \in \mathbb{Z}$, $q_{ij} \in \mathcal{C}^{1+r_j-r_i}(X_i,X_j)=\mathcal{C}^1(X_i[r_i],X_j[r_j])$ such that $q_{ij} = 0$ if $i \geq j$ and $dq + q\circ q = 0$,
	\item[(b)] the space of degree $k$ morphisms $C^\pretr(X,X^\prime)$, for $X=(X_i[r_i],q_{ij}), X^\prime=(X^\prime[r_i^\prime],q^\prime_{ij})$, is defined as the space of matrices
	$\phi=(\phi_{ij}\colon C^k(X_i[r_i]\to X^\prime_j[r_j^\prime]))$, 
	the composition is matrix multiplication and the differential is $d\phi := d_{C}\phi + q'\circ \phi - (-1)^{k} \phi \circ q$.
	\end{itemize}
}
\end{defn}

Now we define, for any object $X\in C^\pretr$, an ``identity'' morphism $\id_X$ (which is required to be a closed morphism of degree 0), and construct an $A_\infty$ morphism $P\colon C^\pretr\oplus\k_{C^\pretr}\to C^\pretr$, making it a weakly unital dg category. In fact, we start with $A_\infty$ morphism $P$, then $\id_X:=P_1(1_X)$. 

Let $X^0,\dots, X^n$ be objects of $C^\pretr$, and let $\phi^i\colon X^{i-1}\to X^i$ be either a morphism in $C^\pretr$ or $1_{X^{i-1}}$ (in which case $X^i=X^{i-1}$).

We are going to define $P_n(\phi^n,\dots,\phi^1)$. Let us introduce some notations. We visualize the string 
$$
X^0\xrightarrow{\phi^1}X^1\xrightarrow{\phi^2}X^2\to\dots\xrightarrow{\phi^n}X^n
$$
as a planar diagram whose horizontal arrows are $q^i_{k\ell}$, where $X^i=(\oplus X^i_k[r^i_k], q^i_{k\ell}$), and whose other arrows are the components of $\phi^i$, $i=1,\dots,n$, see \eqref{primodiagramm}. 

We refer to the arrows $q^i_{k\ell}$ as {\it horizontal}, and the other arrows, called {\it essential}, are either components of $\phi^i$s or morphisms $1_X$ for some $X\in\mathcal{C}$.

Now we associate to any couple $(X^0_a, X^n_b)$ of starting and ending objects, a set $\pa_{ab}$ of \emph{\textbf{all}} the possible paths from $X^0_a$ to $X^n_b$, see  \ref{primodiagramm}, \eqref{secondodiagramm}. \\

By definition, a {\it path} $\kappa\in \Paths_{ab}$ is a sequence of arrows $\kappa=(\kappa_1,\dots, \kappa_\ell)$, either horizontal or essential, such that (a) for any $1\le s\le n$ there is exactly 1 essential arrow which is a components of $\phi^s$, and these $n$ essential arrows stand respecting the order, (b) the arrows between two successive essential arrows, which are components of $\phi^s$ and $\phi^{s+1}$ (here $\phi^s=1_X$ is allowed), are horizontal arrows in $X^{s}$, which form a composable chain (there are allowed more than 1 arrows in this chain), (c) the first arrow starts at $X^0_a$, and the last one ends at $X^n_b$. It follows in particular that a path is represented by a composable chain of arrows.\\

If some $\phi^i=1_{X^{i-1}}$, the corresponding arrow in $\phi^i_{k\ell}\colon X^{i-1}_k\to X^{i-1}_\ell$ is defined as $1_{X^{i-1}_k}$ for $k=\ell$, and 0 otherwise. \\

For example, in \eqref{primodiagramm} the sequence  
$(q_{i\ell}, q_{\ell m}, \phi_{m\ell}, q^{'}_{\ell m}, \phi^{'}_{mm}, q^{''}_{mj})$
 is a path, and in \eqref{secondodiagramm} the sequence $(\phi_{ii}, q^{'}_{i\ell}, q^{'}_{\ell k}, \phi^{'}_{ki}, q^{''}_{ij})$ is a path (here for both diagrams $n=2$). 

\begin{equation}\label{primodiagramm}
\begin{tikzpicture}[baseline= (a).base]
\node[scale=1.0] (a) at (0,0){
\begin{tikzcd}
... \arrow[r, dotted] & X^0_i \arrow[r, "q_{i\ell}"] \arrow[rd, dotted] \arrow[d, dotted] \arrow[rrd, dotted] & X^0_{l} \arrow[r, "q_{\ell m}"] \arrow[d, dotted] \arrow[ld, dotted]                    & X^0_{m} \arrow[ld, "\phi_{ml}"'] \arrow[r, dotted] \arrow[rd, dotted] \arrow[d, dotted] & .. X^0_{j} \arrow[d, dotted] \arrow[ld, dotted] \arrow[lld, dotted] \arrow[r, dotted] & ...\\
... \arrow[r, dotted] & X^1_{i} \arrow[r, dotted] \arrow[rd, dotted] \arrow[d, dotted]                  & X^1_{l} \arrow[r, "q^{'}_{\ell m}"'] \arrow[rd, dotted] \arrow[d, dotted] \arrow[ld, dotted] & X^1_{m} \arrow[d, "\phi^{'}_{mm}"] \arrow[r, dotted] \arrow[ld, dotted] \arrow[rd, dotted] & X^1_{j} \arrow[d, dotted] \arrow[ld, dotted]  \arrow[r, dotted] & ... \\
... \arrow[r, dotted] & X^2_{i} \arrow[r, dotted]                                                       & X^2_{l} \arrow[r, dotted]                                                      & X^2_{m} \arrow[r, "q^{''}_{mj}"']                                                          & X^2_j  \arrow[r, dotted] & ...\\                                                    
\end{tikzcd}
};
\end{tikzpicture}
\end{equation}
\begin{equation}\label{secondodiagramm}
\begin{tikzpicture}[baseline= (a).base]
\node[scale=1.0] (a) at (5,5){
\begin{tikzcd}
... \arrow[r, dotted] & X^0_i \arrow[d, "\phi_{ii}"] \arrow[r, dotted] \arrow[rd, dotted] \arrow[rd, dotted] \arrow[rrd, dotted] & X^0_l \arrow[r, dotted] \arrow[rd, dotted] \arrow[d, dotted] \arrow[ld, dotted]                            & X^0_m \arrow[r, dotted] \arrow[rd, dotted] \arrow[ld, dotted] \arrow[d, dotted] & X^0_j \arrow[d, dotted] \arrow[ld, dotted]  \arrow[r, dotted] & ... \\
... \arrow[r, dotted] & X^1_i \arrow[r, "q^{'}_{i\ell}"] \arrow[rd, dotted] \arrow[d, dotted]                                        & X^1_l \arrow[rr, "q^{'}_{\ell k}", bend left] \arrow[r, dotted] \arrow[d, dotted] \arrow[ld, dotted] \arrow[rd, dotted] & X^1_m \arrow[r, dotted] \arrow[rd, dotted] \arrow[d, dotted] \arrow[ld, dotted] & X^1_j \arrow[llld, "\phi^{'}_{ki}"'] \arrow[d, dotted] \arrow[ld, dotted] \arrow[r, dotted] & ... \\
... \arrow[r, dotted] & X^2_i \arrow[rrr, "q^{''}_{ij}", bend right] \arrow[r, dotted]                                             & X^2_l \arrow[r, dotted]                                                                                    & X^2_m \arrow[r, dotted]                                                         & X^2_j  \arrow[r, dotted] & ...                                 \\                    
\end{tikzcd}
};
\end{tikzpicture}
\end{equation}

Below we assume for $n\ge 2$ that at least one of morphisms $\phi^i\colon X^{i-1}\to X^i$ is $1_{X^{i-1}}$; otherwise, \eqref{bigP} below gives 0. \\

Define 
\begin{equation}\label{bigP}
P_n^{ij}(\phi_n, .., \phi_1):= \sum_{\kappa \in \pa_{ij}} (-1)^{|\kappa|} p_l(\kappa_l, .., \kappa_1)
\end{equation}
(Recall that $p$ denotes the structure $A_\infty$ morphism for $C$). \\

To define the integral number $|\kappa|$, we introduce some notations. Let $\kappa=(\kappa_1,\dots,\kappa_\ell)$, and let the $n$ arrows $\kappa_{d_1},\dots,\kappa_{d_n}$ be essential. 
Assume that $\kappa_{d_s}$ is an arrow in $\mathcal{C}(X^{s-1}_{a_s}[r^s], X^s_{b_s}[r^{s+1}])$.  Define $t_s=
r^{s+1}-r^s$ (we set $t_s=0$ if $\kappa_{d_s}=1_X$). \\

The integral number $|\kappa|$ is given by
\begin{equation}
|\kappa|=\sum_{s=1}^{n} (\deg\phi^{s}+t_s+1)N_s
\end{equation}
where $N_s$ is the number of the horizontal arrows standing {\it leftwards} to the $s$-th essential arrow $\kappa_{d_s}$ {\it in the sequence} $(\kappa_1,\dots,\kappa_\ell)$.

\begin{lemma}\label{lemmaph}
The maps $P_n^{ij}(\phi_n,\dots,\phi_1)$ are homogeneous of degree $\sum\deg \phi_i-n+1$. Thus, they are the components of a morphism $P_n(\phi_n,\dots,\phi_1)\colon X^0\to X^n$ of degree $\sum\deg\phi_i-n+1$ in the category $C^\pretr$. 
\end{lemma}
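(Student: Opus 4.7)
The plan is to reduce the claim to a direct degree count, showing that each summand $p_\ell(\kappa_\ell, \ldots, \kappa_1)$ in \eqref{bigP} has the same degree, regardless of the path $\kappa \in \Paths_{ij}$; homogeneity of $P_n^{ij}$ is then automatic. The argument is essentially a telescoping of shift contributions along the path.

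First I would fix a path $\kappa = (\kappa_1, \ldots, \kappa_\ell)$ and write $\ell = n + H$, where $H$ is the number of horizontal arrows it contains. Since $p \colon C \oplus \k_C \to C$ is an $A_\infty$-functor, the Taylor coefficient $p_\ell$ has degree $1-\ell$, and so
$$
\deg_C p_\ell(\kappa_\ell, \ldots, \kappa_1) \;=\; \sum_{m=1}^\ell \deg_C \kappa_m \;+\; 1 \;-\; \ell.
$$

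Next I would evaluate $\deg_C \kappa_m$ arrow by arrow, in accordance with the conventions of Definition~\ref{defwuph1}. A horizontal arrow $q^s_{k\ell}$ lies in $C^{1+r^s_\ell-r^s_k}(X^s_k,X^s_\ell)$ and so contributes degree $1$ plus the shift change across it. An essential arrow $\kappa_{d_s}$, which is a component of $\phi^s$ (or $1_{X^{s-1}_{a_s}}$ if $\phi^s = 1_{X^{s-1}}$), contributes $\deg\phi^s + t_s$. The key observation is then a telescoping identity: because $\kappa$ is a composable chain from $X^0_i[r^0_i]$ to $X^n_j[r^n_j]$, the sum of all individual shift changes along $\kappa$ equals $r^n_j - r^0_i$. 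Assembling the contributions and cancelling all shift-change terms, the $C$-degree of $p_\ell(\kappa_\ell,\ldots,\kappa_1)$ collapses to
$$
\sum_{s=1}^n \deg \phi^s \;-\; n \;+\; 1 \;+\; (r^n_j - r^0_i).
$$
Regarding this as the degree of a morphism $X^0_i[r^0_i] \to X^n_j[r^n_j]$ in $C^\pretr$, i.e.\ in the shifted sense, subtracts the offset $r^n_j - r^0_i$, leaving $\sum_s \deg \phi^s - n + 1$, independent of $\kappa$. Summing over $\kappa \in \Paths_{ij}$ then gives a homogeneous map of this degree, and these components assemble into the required morphism $P_n(\phi_n, \ldots, \phi_1) \in C^\pretr(X^0, X^n)$.

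I do not anticipate any real obstacle: the proof is purely a bookkeeping of degrees and shifts. The only delicate point is the consistency of the formula when some $\phi^s = 1_{X^{s-1}}$, in which case one takes $\deg\phi^s = 0$ and $t_s = 0$ (the essential arrow is then $1_{X^{s-1}_{a_s}}$, and as an arrow in $C$ it has degree $0$, so the identity is preserved); the telescoping count then goes through verbatim, and the conclusion holds in full generality.
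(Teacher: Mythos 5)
Your proof is correct and takes essentially the same route as the paper's: a direct degree count of the factors of $p_\ell(\kappa_\ell,\dots,\kappa_1)$, using that exactly $n$ of the $\ell$ arrows are essential and the remaining $\ell-n$ horizontal ones each contribute degree $1$. The paper simply works with degrees in the shifted ($C^\pretr$) sense from the outset, which is precisely what your telescoping of the shift contributions $r^{s'}_b-r^s_a$ along the composable chain justifies.
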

\begin{proof}
	Let $\kappa=(\kappa_1,\dots,\kappa_\ell)\in \Paths_{ij}$, we have to compute the degree of $p_\ell(\kappa_n,\dots,\kappa_1)$. One has:
	$$
	\deg p_\ell(\kappa_\ell,\dots,\kappa_1)=\sum_{r=1}^\ell\deg\kappa_r -\ell+1
	$$
	Among $\kappa_1,\dots,\kappa_\ell$ exactly $n$ arrows are $\phi^i_{st}$s, the remaining $\ell-n$ are $q^i_{st}$ and have degree 1 in $C^\pretr$. On the other hand, $\deg \phi^i_{st}=\deg\phi^i$ and does not depend on $s,t$.  Therefore,
	$$
	\deg p_\ell(\kappa_\ell,\dots,\kappa_1)=\sum_{\kappa_r\ne q^j_{st}}\deg \kappa_r -n+1=\sum_{r=1}^n\deg \phi_r-n+1
	$$
\end{proof}

\begin{prop}\label{propph}
Let $C$ be a weakly unital dg category, $C^\pretr$ the non-unital dg category from Definition \ref{defwuph1}. 
Taken for all $n\ge 1$ and all $\phi_1,\dots,\phi_n$, the morphisms $P_n(\phi_n,\dots,\phi_1)$ are the Taylor components of an $A_\infty$ morphism $P\colon C^\pretr\oplus\k_{C^\pretr}\to C^\pretr$,  making $C^\pretr$ a weakly unital dg category, with $\id_X:=P_1(1_X)$.
\end{prop}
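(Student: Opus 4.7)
The plan is to verify the $A_\infty$ relations for $P$ by reducing them, path by path, to the $A_\infty$ relations for the structure morphism $p\colon C\oplus\k_C\to C$, using crucially the Maurer-Cartan equation $dq+q\circ q=0$ satisfied by the twists of objects in $C^\pretr$.

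I first check that $\id_X:=P_1(1_X)$ is closed of degree zero and that for $X=\Cone(f)$ the formula reproduces \eqref{eqidcone}. By Lemma \ref{lemmaph}, $P_1(1_X)$ has degree $0$. For an object $X=(\oplus X_i[r_i],q)$, the component $P_1^{ab}(1_X)$ is a sum over paths from $X_a$ to $X_b$ carrying exactly one essential arrow of the form $1_{X_c}$ and horizontal $q$-arrows before and after. Applying the $A_\infty$ relations for $p$ to each term $p_\ell(q,\dots,q,1_{X_c},q,\dots,q)$ produces $d_C$-contributions, $p$-composition terms $p_k\circ p_{\ell-k}$ (which telescope to $q'\circ P_1^{\cdot b}-P_1^{a\cdot}\circ q$, i.e.\ to the differential in $C^\pretr$), and contributions from adjacent composable $q\circ q$ pairs which cancel against $dq$ contributions via $dq+q\circ q=0$. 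The case $X=\Cone(f)$ is the simplest instance where only the paths $1_X$, $1_Y$, and the two length-$2$ paths $(1_Y,f)$ and $(f,1_X)$ contribute, yielding \eqref{eqidcone} after accounting for the signs $|\kappa|$.

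Next, for the general $A_\infty$ relation, I fix $n\geq 1$ and essential arrows $\phi^1,\dots,\phi^n$ with at least one being a $1_{X^{i-1}}$, and expand both sides in paths. The differential on the left-hand side breaks into: (a) $d_C$ applied inside each $p_\ell(\kappa_\ell,\dots,\kappa_1)$; (b) post-composition with $q'$ of the target object $X^n$; (c) pre-composition with $q$ of the source object $X^0$; (d) internal differentials $d\phi^s$ which for an essential arrow in $C^\pretr$ decompose as $d_C\phi^s+q^s\circ \phi^s \pm \phi^s\circ q^{s-1}$; and (e) compositions $\phi^{s+1}\circ\phi^s$. Applying the $A_\infty$ relations for $p$ to each term $d_C p_\ell(\kappa_\ell,\dots,\kappa_1)$ produces: differentials on the individual $\kappa_r$ (for horizontal $\kappa_r$ these give $-q\circ q$ by the Maurer-Cartan equation; for essential $\kappa_r$ they reproduce $d\phi^s$ in $C$); compositions of two adjacent arrows $\kappa_{r+1}\kappa_r$; and splittings $p_k(\kappa_\ell,\dots,\kappa_{\ell-k+1})\circ p_{\ell-k}(\kappa_{\ell-k},\dots,\kappa_1)$.

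The combinatorial identification proceeds as follows. Splittings of type (iii) in the previous paragraph correspond bijectively to the composition terms $P_{n-s}(\phi^n,\dots,\phi^{s+1})\circ P_s(\phi^s,\dots,\phi^1)$ on the right-hand side of the $A_\infty$ relation for $P$, because composition of morphisms in $C^\pretr$ is matrix multiplication and therefore sums over the intermediate index exactly reproduces the concatenation of two sub-paths through a common vertex. Compositions $\kappa_{r+1}\kappa_r$ of two horizontal arrows cancel, after summing over $\kappa$, against the $dq$-contributions from (a); compositions of a horizontal with an essential (or vice versa) are absorbed into (b)-(c) and into the $q$-corrections appearing in the expansion of $d\phi^s$ in (d); compositions of two adjacent essential arrows yield exactly the $\phi^{s+1}\circ\phi^s$ terms corresponding to $P_{n-1}$-summands of the $A_\infty$ relation for $P$. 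The main obstacle, and essentially the only nontrivial point, is sign bookkeeping: one must verify that the explicit exponent $|\kappa|=\sum_{s=1}^n(\deg\phi^s+t_s+1)N_s$ is designed so that every one of the above identifications carries the correct Koszul sign — the contributions $\deg\phi^s+t_s+1$ encode precisely the degree shift of an arrow crossing past horizontal $q$-arrows of degree $1$ in $C^\pretr$. Once this sign check is carried through (a direct but lengthy verification), every term on one side of the $A_\infty$ identity for $P$ has been matched with a term on the other side, completing the proof.
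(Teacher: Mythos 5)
Your proposal follows essentially the same route as the paper's own proof in Section 6: expand both sides of the $A_\infty$ identity for $P$ over paths, apply the $A_\infty$ relations for $p$ to each $p_\ell(\kappa_\ell,\dots,\kappa_1)$, cancel the $dq$ contributions against $q\circ q$ via the Maurer--Cartan equation, match the splittings $p_a\circ p_b$ with $P_a\circ P_b$ through matrix multiplication, absorb the horizontal-essential compositions into the $C^\pretr$-differential of the $\phi^s$ and the external $q$, $q'$ terms, and identify essential-essential compositions with the $P_{n-1}$ summands. The paper likewise defers the sign verification to a ``long but routinous computation,'' so your treatment is at the same level of detail and is correct.
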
 

We prove this Proposition in Section 6. 

\begin{defn}[Pretriangulated hull of a wudg category, II]\label{defwuph2}
	{\rm The pretriangulated hull $C^\pretr$ of a weakly unital dg category $C$ is the non-unital dg category $C^\pretr$ (see Definition \ref{defwuph1}) with the weakly unital structure given in Proposition \ref{propph}. }
	\end{defn}

It is instructive to unwind the definition $\id_X=P_1(1_X)$ and get an explicit formula $\id_X$, $X\in C^\pretr$. 

Let $X=(\oplus X_i[r_i], q_{ij})\in C^\pretr$. We want to find the $(ij)$-component $(\id_X)_{ij}$. Let $i\le j$. Define
\begin{equation}\label{eqidx}
\begin{aligned}
\ &(\id_X)_{ij}=\sum_{i=\ell_0<\ell_1<\dots <\ell_k=j}\sum_{r=0}^k \\
&(-1)^r p_k(q_{\ell_{k-1}j}, q_{\ell_{k-2}\ell_{k-1}},\dots,q_{\ell_{r}\ell_{r+1}},1_{X_{i_r}},q_{\ell_{r-1}\ell_r},q_{\ell_{r-2}\ell_{r-1}},\dots,q_{i\ell_1})
\end{aligned}
\end{equation}
Then
\begin{equation}\label{eqidxx}
(\id_X)_{ij}=\begin{cases}
\text{the rhs of \eqref{eqidx}}&i\le j\\
0&i>j
\end{cases}
\end{equation}
The reader easily checks that for the case $\id_{\Cone(f)}$ \eqref{eqidx} gives \eqref{eqidcone}.
\comment
In the weakly unital dg-category $\mathcal{C}^{pretr}$, the weak units $id_X$ are $P_1(1_X)$, so that $id_X$ is closed by definition, once we've proven that $P$ is an $A_\infty$ map.\\
In particular, given a closed morphism $\xi: X \rightarrow Y \in \mathcal{C}$, we can describe $Id_{Cone(\xi)}$ as
\begin{equation*}
\begin{tikzpicture}[baseline= (a).base]
\node[scale=1.3] (a) at (5,5){
\begin{tikzcd}
X \arrow[d, "id_X"'] \arrow[r, "\xi"] \arrow[rd, "\epsilon"] & Y \arrow[d, "id_Y"]     \\
X \arrow[r, "\xi"]   & Y                                                  
\end{tikzcd}
};
\end{tikzpicture}
\end{equation*}
where $\epsilon = p_2(\xi,1) - p_2(1,\xi),$ since in our situation $\pa_{12} = \{(1,\xi), (\xi,1)\},$\\ $ \pa_{11} = \{(1_X)\}, \pa_{22} = \{(1_Y)\}$ and $\pa_{21} = \emptyset$.
So by our formula we get: 
\begin{equation}
\begin{aligned}
P_1(1_{Cone(\xi)})_{11} =&\;\; p_1(1_X) = id_X \\
P_1(1_{Cone(\xi)})_{12} =&\;\; p_2(\xi,1) - p_2(1,\xi) \\
P_1(1_{Cone(\xi)})_{22} =&\;\; p_1(1_Y) = id_Y \\
P_1(1_{Cone(\xi)})_{21} =&\;\; 0
\end{aligned}
\end{equation}
\endcomment

\section{\sc A Closed Model Structure on $\Cat_{dgwu}(\Bbbk)$}\label{s.mc}
\subsection{\sc }
In this section we provide a cofibrantly generated Quillen model structure on $\Cat_{dgwu}(\Bbbk)$.
The reader is referred to [Ho], [Hi], [GS] for general introduction to (cofibrantly generated) closed model categories. 
A simpler counterpart of the material of this Section is our treatment [PS] of the closed model structure on the category $\Cat_\dgwu^0(\k)$.
However, the passage from $\Cat_\dgwu^0(\k)$ to $\Cat_\dgwu(\k)$ requires several essentially new ideas, such as the weakly unital pretriangulated hull, introduced in Section \ref{s.def}, and a ``fattened version'' $\mathcal{K}$ of the Kontsevich dg category $\mathcal{K}$ (see Section \ref{sectionkcat}).

Define \emph{weak equivalences} $W$ in $\Cat_{dgwu}(\Bbbk)$ as the weakly unital dg functors $F: \C \rightarrow \D$ such that the following two conditions hold:
\begin{itemize}
	\item[(W1)] for any two objects $x,y \in \C$, the map of complexes $\C(x,y) \rightarrow \D(Fx,Fy)$ is a quasi-isomorphism of complexes,
	\item[(W2)] the functor $H^0(F): H^0(\C) \rightarrow H^0(\D)$ is an equivalence of $\Bbbk-$linear categories.
\end{itemize}
Remark that for a weakly unital dg category $\C$, the category $H^0(\C)$ is strictly unital and the functor $H^0(F)$ is well-defined.

Define \emph{fibrations} $Fib$ in $\Cat_{dgwu}(\Bbbk)$ as the weakly unital dg functors $F: \C \rightarrow \D$ such that the following two conditions hold:
\begin{itemize}
	\item[(F1)] for any two objects $x,y \in \C$, the map of complexes $\C(x,y) \rightarrow \D(Fx,Fy)$ is component-wise surjective,
	\item[(F2)] for any $x \in \C$ and a closed degree 0 arrow $g: Fx \mapsto z$ in $\D$ ($z$ not necessarily in the image of $F$), such that $g$ becomes an isomorphism in $H^0(\C),$ there is an object $y \in \C$ amd a closed degree 0 arrow $f: x \mapsto y$ inducing an isomorphism in $H^0(\D)$ and such that $F(f) = g$.
\end{itemize}
We define also a class $Surj$ of maps in $\Cat_\dgwu(\k)$ as follows: a weakly unital dg functor $F: \C \rightarrow \D$ belongs to $Surj$ if $F$ is surjective on objects and if $(F1)$ holds.
\begin{lemma}
	A weakly unital dg functor $F: \C \rightarrow \D$ belongs to $Fib \cap W$ if and only if it belongs to $Surj \cap (W1)$.
\end{lemma}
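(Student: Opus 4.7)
The plan is to unpack both sides and verify the two implications separately. The condition $F\in\mathrm{Fib}\cap W$ asks for (F1), (F2), (W1), (W2), while $F\in\mathrm{Surj}\cap(W1)$ asks for (F1), (W1), and surjectivity on objects. So the forward implication reduces to producing object-surjectivity from (W2) and (F2), and the backward one to deriving (W2) and (F2) from (F1), (W1), and object-surjectivity.

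For the forward direction I would pick an arbitrary $z\in\D$, use essential surjectivity in (W2) to produce $x\in\C$ together with a closed morphism $g\colon Fx\to z$ representing an isomorphism in $H^0(\D)$, then invoke (F2) to lift $g$ to a closed $f\colon x\to y$ in $\C$ with $F(f)=g$. In particular $F(y)=z$, so $F$ is object-surjective.

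For the backward direction, (W2) is essentially automatic: object-surjectivity of $F$ gives essential surjectivity of $H^0(F)$, while (W1) says each map $\C(x,y)\to\D(Fx,Fy)$ is a quasi-isomorphism, hence induces a bijection on $H^0$, so $H^0(F)$ is fully faithful. The real content is (F2). Given $x\in\C$ and a closed $g\colon Fx\to z$ invertible in $H^0(\D)$, first pick $y\in\C$ with $F(y)=z$ by object-surjectivity, and use (F1) to choose any lift $\tilde f\in\C^0(x,y)$ of $g$. Then $F(d\tilde f)=dg=0$, so $d\tilde f$ lies in the kernel complex $K:=\ker\bigl(F_*\colon\C(x,y)\to\D(Fx,Fy)\bigr)$, which is acyclic because (F1) together with (W1) make $F_*$ a surjective quasi-isomorphism. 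The closed element $d\tilde f\in K^1$ is therefore a coboundary $du$ for some $u\in K^0$, and $f:=\tilde f-u$ is a closed lift of $g$. Full faithfulness of $H^0(F)$ then forces $[f]$ to be invertible in $H^0(\C)$, whence $[F(f)]=[g]$ is also an isomorphism in $H^0(\D)$, as required.

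The one mildly delicate step is this correction argument producing a closed lift $f$ from the mere lift $\tilde f$; this is exactly where the interaction of (F1) and (W1) is needed, through acyclicity of $\ker F_*$. Everything else is a direct unpacking of the definitions.
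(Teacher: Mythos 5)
Your proof is correct; the paper gives no argument of its own here and simply cites [PS, Lemma 2.1]. Your two implications --- extracting object-surjectivity from (W2) together with (F2), and recovering (F2) from (F1), (W1) and object-surjectivity via the acyclicity of $\ker\bigl(F_*\colon\C(x,y)\to\D(Fx,Fy)\bigr)$ and the fact that the fully faithful functor $H^0(F)$ reflects isomorphisms --- are exactly the standard content of that reference, so the approach is the same.
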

\begin{proof}
	A proof can be found in [PS, Lemma 2.1].
\end{proof}

\begin{lemma}\label{calculation}
Let $X$ be a weakly unital dg category, $x \in X$ an object. Suppose there are two degree $-1$ maps $h_1, h_2 \in X^{-1}(x,x)$ such that $dh_i = id_x, i = 1,2.$ Then there is $t \in X^{-2}(x,x)$ such that $dt = h_1 - h_2$.
\end{lemma}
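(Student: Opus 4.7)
The element $h_1-h_2$ is closed of degree $-1$ in the complex $X(x,x)$, so the task is to exhibit it as a boundary. Conceptually, $[\id_x]$ is the unit of the graded algebra $H^\ast(X(x,x))$, which inherits a unital algebra structure from the $A_\infty$ structure $p$ (the relations \eqref{eqp2} assert precisely that $\id_x$ is a two-sided unit up to a $p_2$-homotopy); since $\id_x=dh_1$ is a boundary by hypothesis, its class vanishes and hence the whole cohomology $H^\ast(X(x,x))$ is zero. The lemma therefore reduces to producing an explicit primitive, which I will do by hand.

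The plan is to pass from $h_1-h_2$ to $(h_1-h_2)\circ\id_x$ via the $A_\infty$-homotopy $p_2$, and then to bound the latter by pre-composing with $h_2$. More precisely, applying the first identity of \eqref{eqp2} to $f=h_1-h_2$ and using $d(h_1-h_2)=0$, one obtains
\[
dp_2(h_1-h_2,\,1_x)=(h_1-h_2)-(h_1-h_2)\circ\id_x.
\]
On the other hand, the graded Leibniz rule together with $dh_2=\id_x$ and $d(h_1-h_2)=0$ yields
\[
d\bigl((h_1-h_2)\circ h_2\bigr)=-(h_1-h_2)\circ\id_x.
\]
Subtracting these two identities, the element
\[
t:=p_2(h_1-h_2,\,1_x)-(h_1-h_2)\circ h_2\in X^{-2}(x,x)
\]
satisfies $dt=h_1-h_2$.

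There is no real obstacle: once the two displays above are written down the verification is immediate, and the only ingredient beyond the graded Leibniz rule is the single $A_\infty$ relation \eqref{eqp2}. The construction is deliberately asymmetric in $h_1,h_2$ (only $h_2$ is used in the second summand, because it is $dh_2=\id_x$ that we invoke in the Leibniz computation), but an equally valid primitive is obtained by interchanging their roles.
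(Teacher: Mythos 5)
Your proof is correct, and it takes a genuinely shorter route than the paper's. You exploit the fact that $h_1-h_2$ is closed, so a single application of the first identity in \eqref{eqp2} (with $df=0$) gives $dp_2(h_1-h_2,1_x)=(h_1-h_2)-(h_1-h_2)\circ\id_x$, while the graded Leibniz rule gives $d\bigl((h_1-h_2)\circ h_2\bigr)=-(h_1-h_2)\circ\id_x$; the difference is then a primitive of $h_1-h_2$, and the degrees check out ($p_2$ has degree $-1$, so both summands lie in $X^{-2}(x,x)$). The paper instead starts from $t'=h_1h_2$, whose differential $\id_x\circ h_2-h_1\circ\id_x$ treats $h_1$ and $h_2$ separately; after rewriting via \eqref{eqp2} this leaves the residual term $p_2(\id_x,1)-p_2(1,\id_x)$, and killing it requires the auxiliary element $H$ of \eqref{eqh}, built from $p_2\circ p_2$ and six $p_3$-terms, together with a lengthy cancellation using \eqref{eqp3}. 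Your choice of working with the single closed element $h_1-h_2$ sidesteps that entire computation: no $p_3$-terms are needed at all. What the paper's longer route buys is the separately useful identity $dH=p_2(\id_x,1)-p_2(1,\id_x)$ (the statement that the left and right unit homotopies evaluated on the weak unit agree up to a boundary, with no contractibility hypothesis); for the lemma itself your argument is strictly more economical. Your opening conceptual remark --- that \eqref{eqp2} makes $[\id_x]$ a two-sided unit of $H^\ast(X(x,x))$, so this cohomology vanishes once $\id_x$ is a boundary --- is also correct, though of course the lemma demands the explicit primitive, which you supply.
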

\begin{proof}
Consider ${t} ^\prime= h_1h_2$. We find (using \eqref{eqp2}):
\begin{equation}
\begin{aligned}
 d{t}^\prime= & \; id_x \circ h_2 - h_1 \circ id_x \\
 = & \; -dp_2(1,h_2) + h_2 - p_2(1,id_x) + dp_2(h_1,1) - h_1 + p_2(id_x,1)
 \end{aligned}
 \end{equation}
If we manage to prove that $p_2(id,1) - p_2(1,id)$ is a boundary, we're done. Consider
\begin{equation}\label{eqh}
\begin{aligned}
H :=&\; p_2(p_2(id,1),1) + p_2(1,p_2(1,id)) - p_3(1,id,1) + p_3(id,id,1) + p_3(1,id,id) - p_3(1,1,id) \\
 & - p_3(id,1,1) + p_3(1,1,1)
\end{aligned}
\end{equation}
We compute $dH$ using \eqref{eqp2} and \eqref{eqp3}, the differential of each particular summand in \eqref{eqh} is displayed as $[\dots]$:
\begin{equation*}
\begin{aligned}
dH := \; &\big[\dotuline{p_{2}(id,1)\circ id} + \uline{p_{2}(id\circ id,1)}\big] + \big[\uuline{id\circ p_{2}(1,id)} - \uwave{p_{2}(1,id \circ id)}\big]+\big[\ \boxed{ p_{2}(id,1)  -p_{2}(1,id)} \\
&- \dashuline{id\circ p_{2}(id,1)} + \textcolor{orange}{p_{2}(1,id)\circ id}\big]+\big[ - \uline{p_{2}(id\circ id,1)} + \dashuline{id\circ p_{2}(id,1)} \big]+\\ 
&\big[ \uwave{p_{2}(1,id \circ id)} - \textcolor{orange}{p_{2}(1,id)\circ id}\big] + \big[{p_{2}(1,1)\circ id} - \uuline{id\circ p_{2}(1,id)}\big]+ \\ 
&\big[-\uline{\uwave{id\circ p_{2}(1,1)}} + \dotuline{p_{2}(id,1)\circ id}\big] + \big[\uline{\uwave{id\circ p_{2}(1,1)}} - {p_{2}(1,1)\circ id} \big] =\\
& p_2(id,1) - p_2(1,id).
\end{aligned}
\end{equation*}
Therefore $t := -{t}^\prime - p_2(1,h_2) + p_2(h_1,1) + H \in X^{-2}(x,x)$ is such that $dt = h_1 - h_2$.
\end{proof}

\begin{remark}{\rm
Lemma \ref{calculation} is a refined version of Lemma 2.8 and discussion below it of [PS]. When $\id_x\circ \id_x=\id_x$ is not assumed, the computation turns out to be more tricky, as we've seen in Lemma \ref{calculation}.	
}
\end{remark}

\subsection{\sc The weakly unital dg category $\mathcal{K}^\prime$}\label{sectionkcat}
The wu dg category $\mathcal{K}^\prime$ introduced below is a weakly unital counterpart of the dg category $\mathcal{K}$, due to Kontsevich in [K1, Lecture 6], and subsequently used by Tabuada in his closed model structure on $\Cat_\dg(\k)$ [Tab1]. 

Recall here the definition. 

The dg category $\mathcal{K}$ is the strictly unital dg category with two objects 0 and 1, and freely generated by $f\in \mathcal{K}^0(0,1), g\in\mathcal{K}^0(1,0), h_0\in \mathcal{K}^{-1}(0,0), h_1\in \mathcal{K}^{-1}(1,1), r\in \mathcal{K}^{-2}(0,1)$, whose differentials are
\begin{equation}\label{eqkstrict}
df=dg=0, dh_{0}=g\circ f-\id_{0}, dh_{1}=f\circ g-\id_{1}, dr=h_1\circ f-f\circ h_0
\end{equation}

Denote by $I_2$ the $\k$-linear envelope of the ordinary category with two objects $0$ and $1$, and with a unique morphism (including the identity one) between any ordered pair of objects.
There is a dg functor $p_\mathcal{K}\colon \mathcal{K}\to I_2$, which is the identity map on the objects, and sends $h_1, h_2,r$ to 0.

The following well-known result says that $\mathcal{K}$ is a semi-free resolution of $I_2$:
\begin{lemma}\label{lemmadrk}
The dg functor $p_\mathcal{K}\colon \mathcal{K}\to I_2$ is a quasi-equivalence.	
\end{lemma}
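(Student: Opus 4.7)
Since $p_\mathcal{K}$ is a bijection on objects, $H^0(p_\mathcal{K})$ is automatically essentially surjective, and it suffices to verify condition (W1): each map $\mathcal{K}(x,y) \to I_2(x,y)$ is a quasi-isomorphism of complexes. Because $I_2(x,y) = \k$ concentrated in degree $0$, this reduces to showing that $H^{\bullet}(\mathcal{K}(x,y))$ is one-dimensional in degree $0$ and vanishes otherwise, with generator represented by $\id_x$, $f$, or $g$ according to the pair $(x,y)$.

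The approach exploits the fact that $\mathcal{K}$ is semi-free over the discrete $\k$-linear category on $\{0,1\}$: it is built by successively adjoining $f, g$ in degree $0$ (yielding the path category $\mathcal{P}$ of the quiver $0 \rightleftarrows 1$), then $h_0, h_1$ in degree $-1$ with the prescribed boundaries, and finally $r$ in degree $-2$. Each stage is a pushout in $\Cat_\dg(\k)$ along a free cell, so $\mathcal{K}(x,y)$ acquires an explicit $\k$-basis of composable monomials in the generators $f, g, h_0, h_1, r$.

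To compute the cohomology I would filter $\mathcal{K}(x,y)$ by the total number of occurrences of $h_0, h_1, r$ in a monomial. The $E_0$-differential vanishes, while the $E_1$-differential is governed by the relations $dh_0 = gf - \id_0$, $dh_1 = fg - \id_1$, $dr = h_1 f - f h_0$; these collapse every word of length $\geq 2$ in $f, g$ onto a shorter one up to a boundary, and pair off every word involving an $h_i$ or $r$ with its boundary partner. The outcome is $E_2 = \k$ in degree $0$ on the expected generator, and the spectral sequence degenerates: each additional $h$- or $r$-letter strictly lowers the cohomological degree of a monomial, so no higher differentials survive and in each fixed cohomological degree only finitely many filtration steps contribute, giving convergence.

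The main obstacle is combinatorial bookkeeping: the intermediate category $\mathcal{P}$ already has infinite-dimensional Hom spaces $\mathcal{P}(0,0) = \bigoplus_{n \geq 0} \k \cdot (gf)^n$, and the argument must simultaneously collapse this tower and account for the new classes that could be contributed by products such as $h_0^2$ or $g h_1 f$. A cleaner route I would actually pursue in the write-up is to construct an explicit contracting homotopy $\sigma$ on each Hom complex, modulo its canonical one-dimensional degree-$0$ summand, by induction on word length: shorten a word ending in $gf$ (respectively $fg$) by inserting $h_0$ (respectively $h_1$); the ambiguity on words admitting both shortenings is precisely what $r$ is designed to control, via its boundary $h_1 f - f h_0$. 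Equivalent semi-free resolutions are treated in Drinfeld [Dr] and Tabuada [Tab1], which one may cite as a shortcut.
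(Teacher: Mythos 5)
The paper itself does not prove this lemma: its entire ``proof'' is the citation to [Dr, 3.7], so any genuine argument you supply is necessarily a different route. Your reduction is correct and worth keeping: since $p_{\mathcal{K}}$ is bijective on objects and $I_2(x,y)=\k$ sits in degree $0$, the lemma amounts to showing that each complex $\mathcal{K}(x,y)$ has cohomology $\k$ concentrated in degree $0$, generated by $[\id_x]$, $[f]$ or $[g]$; and semi-freeness does give you a monomial basis to compute with.

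The gap is in the spectral-sequence step. For the filtration by the total number of occurrences of $h_0,h_1,r$, the $E_0$-differential does \emph{not} vanish: the component $r\mapsto h_1\circ f-f\circ h_0$ of $d$ replaces one letter of $\{h_0,h_1,r\}$ by a word containing exactly one letter of $\{h_0,h_1,r\}$, hence preserves the filtration degree and survives in the associated graded. So the page on which you claim the collapse happens is misidentified. If you instead weight $r$ by $2$ so that $d_0=0$, then every component of $d$ lowers the filtration by exactly $1$, $d_1$ is the entire differential, and the spectral sequence merely restates the problem. Either way, the actual content of the lemma --- that the classes $(gf)^n$ all collapse to $[\id_0]$, and that nothing survives in negative degrees from words such as $h_0^2$, $g h_1 f$, or $h_1f-fh_0$ (the last being exactly $dr$) --- is asserted (``these collapse every word \dots and pair off every word \dots'') rather than proved; likewise the consistency of your contracting homotopy $\sigma$ on words admitting two shortenings is precisely the hard point, not a remark. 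Compare the care the paper takes in the analogous computation for $\mathcal{K}'$ (Proposition \ref{firstbigthm}), where the filtration counts only occurrences of $r$ and the identification of the $E_1$-page is delegated to the operadic quasi-isomorphism of [PS]. Your closing fallback --- citing [Dr, 3.7] --- is exactly what the paper does and is acceptable; a self-contained proof requires either computing $H(E_0,d_0)$ for the honest filtration or writing down $\sigma$ on a basis and verifying $d\sigma+\sigma d=\id-\pi$.
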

The proof can be found in [Dr, 3.7].

\qed

\begin{defn}
	{\rm
Denote by $\K$ the weakly unital dg category with two objects $0$ and $1$, whose morphisms are freely generated by the following morphisms:
\begin{itemize}
\item a  morphism $f \in (\K)^0(0,1)$,
\item a  morphism $g \in (\K)^0(1,0)$,
\item a morphism $h_0 \in (\K)^{-1}(0,0)$,
 \item a  morphism  $h_1 \in (\K)^{-1}(1,1)$,
 \item a degree -2 morphism $r\in (\mathcal{K}^{\prime})^{-2}(0,1)$
 \end{itemize}
 whose differentials are given as
\begin{equation}\label{eqrelk}
\begin{aligned}
\ &df=dg=0\\
&dh_0=g\circ f -\id_0, \;\;\; dh_1=f\circ g-\id_{1}\\
&dr = h_1\circ f - f\circ h_{0} + p_2(1,f) - p_2(f,1)
\end{aligned}
\end{equation} \\
}
\end{defn}

A version of a lemma in [K1, Lecture 6] holds as well in the setting of weakly unital dg categories: 
\begin{lemma}\label{lem.kon}
Let $C$ be a weakly unital dg category, and $\xi \in C^{0}(x,y)$  be a closed degree 0 morphism, such that $[\xi] \in H^0(C)$ is a homotopy equivalence. Then there is a weakly unital dg functor $F: \K \rightarrow C$, such that $F(f) = \xi.$
\end{lemma}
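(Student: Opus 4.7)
The plan is to extract all the data needed to define $F$ from a single contracting homotopy of $\Cone(\xi)$ in the weakly unital pre-triangulated hull $C^{\pretr}$ of Section \ref{s.def}; this approach parallels Kontsevich's strictly unital construction for $\mathcal{K}$ and automatically supplies the $p_2$-corrections required in the weakly unital setting.

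First, form the cone $\Cone(\xi)=(X\oplus Y[-1],\,q=\xi)\in C^{\pretr}$. Since $[\xi]$ is a homotopy equivalence in $H^0(C)$, the object $\Cone(\xi)$ ought to be contractible in $H^0(C^{\pretr})$: there should exist $s\in(C^{\pretr})^{-1}(\Cone(\xi),\Cone(\xi))$ with $ds=\id_{\Cone(\xi)}$, where the right-hand side is the twisted identity \eqref{eqidcone} carrying the correction $\varepsilon=p_2(\xi,1_x)-p_2(1_y,\xi)$. Producing this $s$ is the main technical content: in the strict case one simply writes down the diagonal contraction, but in the weakly unital case the non-zero $(1,2)$-component $\varepsilon$ of $\id_{\Cone(\xi)}$ forces the appearance of a non-trivial off-diagonal entry of $s$, whose existence is precisely the content of the cone-contractibility Lemma \ref{kcontr} referenced in the outline of Section \ref{s.mc}.

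Next, read off the components of $s$. Writing $s$ as a matrix with entries $s_{ij}$ in $C$ of the appropriate (shifted) degrees, the equation $ds=\id_{\Cone(\xi)}$ unpacks into four component equations. The $(2,1)$-entry gives $d_C s_{21}=0$, so $s_{21}\in C^0(y,x)$ is closed; set $\eta:=s_{21}$. The $(1,1)$- and $(2,2)$-entries give (after sign conventions) $d(-s_{11})=\eta\circ\xi-\id_x$ and $d(-s_{22})=\xi\circ\eta-\id_y$, so set $h_0:=-s_{11}\in C^{-1}(x,x)$ and $h_1:=-s_{22}\in C^{-1}(y,y)$. The crucial $(1,2)$-entry, where the $\varepsilon$-correction from \eqref{eqidcone} enters, produces a degree $-2$ element $s_{12}\in C^{-2}(x,y)$ whose differential is (up to a uniform sign) $h_1\circ\xi-\xi\circ h_0$ plus the $p_2$-terms of $\varepsilon$; this matches exactly the image in $C$ of the right-hand side of the defining relation for $dr$ in \eqref{eqrelk}. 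Set $r:=\pm s_{12}$.

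Finally, declare $F(0)=x,\;F(1)=y,\;F(f)=\xi,\;F(g)=\eta,\;F(h_0)=h_0,\;F(h_1)=h_1,\;F(r)=r$. Since $\K$ is freely generated as a weakly unital dg category by $f,g,h_0,h_1,r$ subject only to the differential relations \eqref{eqrelk}, and these relations are satisfied in $C$ by construction, the assignment extends uniquely to a weakly unital dg functor $F\colon\K\to C$; compatibility with the $A_\infty$-structures $p^{\K}$ and $p^{C}$ is automatic from the universal property of the free weakly unital dg category. The principal obstacle is thus the construction of the contraction $s$ with $ds$ equal to the twisted identity \eqref{eqidcone}: unlike the strict case, a purely diagonal contraction will not close up against the $\varepsilon$-term, and one must exhibit a compatible $(1,2)$-entry, which is the substance of Lemma \ref{kcontr}.
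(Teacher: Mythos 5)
Your overall strategy (translate the problem into finding a contraction $s$ of $\Cone(\xi)$ with $ds=\id_{\Cone(\xi)}$, then read off $\eta,h_0,h_1,r$ from the matrix entries of $s$ and extend by freeness) is a reasonable reformulation, and your degree bookkeeping for the four components of $s$ is correct. However, there is a genuine gap at the one point you yourself identify as ``the main technical content'': you never actually produce the contraction $s$. You write that $\Cone(\xi)$ ``ought to be contractible'' and then attribute the existence of $s$ to Lemma \ref{kcontr}, but that lemma does not say this. Lemma \ref{kcontr} is only the dictionary: it establishes a bijection between weakly unital dg functors $\K\to C$ and pairs $(\xi,h)$ with $h$ a contraction of $\Cone(\xi)$. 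It supplies your translation steps, not the existence of $h$. Deducing contractibility of $\Cone(\xi)$ from the hypothesis that $[\xi]$ is invertible in $H^0(C)$ would require knowing, e.g., that $H^0(C^{\pretr})$ is triangulated and that the cone of an isomorphism is a zero object there --- facts the paper does not establish for the weakly unital pretriangulated hull, and which are themselves nontrivial precisely because of the $\varepsilon$-correction in \eqref{eqidcone}. So the proposal is circular at its core: the existence of the off-diagonal entry $s_{12}$ compatible with $\varepsilon$ is exactly what the lemma asks you to prove.

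The paper's proof fills this gap by an explicit construction that your proposal has no substitute for. Starting from the raw homotopy-equivalence data $\eta$, $h_x$, $h_y$ with $dh_x=\eta\circ\xi-\id_x$ and $dh_y=\xi\circ\eta-\id_y$, one sets $A:=h_y\circ\xi-\xi\circ h_x+p_2(1,\xi)-p_2(\xi,1)$, observes $dA=0$, and then checks by direct computation (using \eqref{eqp2}) that after replacing $h_y$ by $h_y':=h_y-A\circ\eta$, the element $r:=A\circ h_x-p_2(A,1)$ satisfies $dr=h_y'\circ\xi-\xi\circ h_x+p_2(1,\xi)-p_2(\xi,1)$. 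This adjustment of $h_y$ and explicit primitive for $A$ is the actual content of the lemma; without it (or an argument that $H^0(C^{\pretr})$ is triangulated in the required sense), your proof does not close.
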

\begin{proof}
	By definition of being a homotopy equivalence, there exist $\eta \in C^0(y,x)$, $h_x \in C^{-1}(x,x)$ and $h_y \in C^{-1}(y,y)$ such that:
	\begin{equation}
	\begin{aligned}
	dh_x = \eta\circ\xi - id_x \\
	dh_y = \xi\circ\eta - id_y
	\end{aligned}
	\end{equation}
	Now we are looking for a morphism $r \in C^{-2}(x,y)$ such that $dr = h_y\circ\xi - \xi\circ h_{x} + p_2(1,\xi) - p_2(\xi,1)$. 
	We define $A := h_y\circ\xi - \xi\circ h_{x} + p_2(1,\xi) - p_2(\xi,1)$. Clearly $dA = 0$. \\
	Then we take $h^{'}_y := h_y - A\circ\eta$ and $r:= A\circ h_x - p_2(A,1)$, so that we easily get:
	\begin{equation}
	dh^{'}_y = dh_y - dA\eta + Ad\eta =\xi\eta - id_y. 
	\end{equation}
	and also
	\begin{equation}
	\begin{aligned}
	dr =& dA\circ h_x - A\circ dh_x - A\circ id_x + A - p_2(dA,1) \\
	=& -A\circ(\eta\xi - id_x) - A\circ id_x + A \\
	=& -A\circ \eta\circ \xi + h_y\circ\xi - \xi\circ h_x + p_2(1,\xi) - p_2(\xi,1) \\
	=& (h_y - A\circ\eta)\circ\xi - \xi\circ h_x  + p_2(1,\xi) - p_2(\xi,1) \\
	=& h^{'}_y\circ\xi - \xi\circ h_x  + p_2(1,\xi) - p_2(\xi,1).
	\end{aligned}
	\end{equation}
	We are done.
\end{proof}

We prove a lemma which we will be used later in the proof of Theorem \ref{firstbigthm} and (implicitly) in Theorem \ref{bigtheorem}:
\begin{lemma}\label{kcontr}
Let $C$ be a weakly unital dg category. There is a bijection between the set of weakly unital dg functors from $\K$ to $C$ and the set of pairs $(\xi,h)$, where $\xi \in C^{0}(x,y)$ is a closed morphism and $h$ is a contraction of $Cone(\xi)$ in $C^{pretr}$.
\end{lemma}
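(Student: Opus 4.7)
The plan is to use that $\K$ is freely generated as a weakly unital dg category. By construction, $\K$ is obtained from the unital dg graph on two vertices $0, 1$ with additional generating morphisms $f, g, h_0, h_1, r$ (of the indicated degrees), freely generated as a wu dg category via the left adjoint $F \colon \Graphs_\dgu(\k) \to \Cat_\dgwu(\k)$ of Section 2, and then quotiented by the differential relations \eqref{eqrelk}. Consequently, specifying a weakly unital dg functor $\Phi \colon \K \to C$ is tautologically equivalent to specifying a pair of objects $x := \Phi(0)$, $y := \Phi(1)$ together with morphisms $\xi := \Phi(f) \in C^0(x,y)$, $\eta := \Phi(g) \in C^0(y,x)$, $h_x := \Phi(h_0) \in C^{-1}(x,x)$, $h_y := \Phi(h_1) \in C^{-1}(y,y)$, $R := \Phi(r) \in C^{-2}(x,y)$, subject to the images under $\Phi$ of the relations in \eqref{eqrelk}, namely $d\xi = d\eta = 0$, $dh_x = \eta\xi - \id_x$, $dh_y = \xi\eta - \id_y$, and $dR = h_y\xi - \xi h_x + p_2(1_y,\xi) - p_2(\xi,1_x)$.

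Next, I would unwind what a contraction $h$ of $\Cone(\xi) \in C^\pretr$ looks like. By Definition \ref{defwuph1}, $h$ is a degree $-1$ endomorphism of $\Cone(\xi) = (x \oplus y[-1], q_{12} = \xi)$, consisting of four components $h_{11} \in C^{-1}(x,x)$, $h_{12} \in C^{-2}(x,y)$, $h_{21} \in C^0(y,x)$, $h_{22} \in C^{-1}(y,y)$. Using the differential formula of Definition \ref{defwuph1} together with the explicit form of $\id_{\Cone(\xi)}$ given by Proposition \ref{propph}, which specializes in the cone case to \eqref{eqidcone}, the single equation $dh = \id_{\Cone(\xi)}$ unfolds into four component equations in $C$:
\begin{equation*}
\begin{aligned}
d h_{21} &= 0,\\
d h_{11} + h_{21} \circ \xi &= \id_x,\\
d h_{22} + \xi \circ h_{21} &= \id_y,\\
d h_{12} + \xi \circ h_{11} + h_{22} \circ \xi &= p_2(\xi, 1_x) - p_2(1_y, \xi).
\end{aligned}
\end{equation*}

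The bijection is then set up by the identifications $\eta = h_{21}$, $h_x = -h_{11}$, $h_y = -h_{22}$, $R = -h_{12}$ (with signs adjusted, if necessary, to match the conventions of \eqref{eqrelk}): substitution turns the four component equations above exactly into the four nontrivial differential relations on $(\eta, h_x, h_y, R)$ in \eqref{eqrelk} (the fifth relation $d\xi = 0$ being the hypothesis that $\xi$ is closed). This supplies both maps of the claimed bijection, and mutual inversion is immediate from the freeness of $\K$. The main obstacle is the careful sign-bookkeeping in the $(1,2)$-component equation: the shifts in $\Cone(\xi) = (X \oplus Y[-1], q)$ affect the degrees and Koszul signs in the composition formula for $C^\pretr$, and the correction term $\varepsilon = p_2(\xi,1_x) - p_2(1_y,\xi)$ in $\id_{\Cone(\xi)}$ must be aligned with the $p_2$-terms in the defining relation of $r$ in $\K$; the analogous computation is already carried out in the proof of Lemma \ref{lem.kon}, which serves as a useful guide. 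Once the signs are pinned down, the rest of the proof is an immediate consequence of the freeness of $\K$.
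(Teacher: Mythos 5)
Your proposal is correct and follows essentially the same route as the paper: use the freeness of $\K$ to identify a weakly unital dg functor $\K\to C$ with the tuple $(\xi,\eta,h_x,h_y,R)$ subject to the relations \eqref{eqrelk}, unwind the single equation $dh=\id_{\Cone(\xi)}$ into its four matrix components (with $\eta$ appearing as the $(2,1)$-component of the contraction), and match the two systems. The paper likewise leaves the final sign bookkeeping as ``a direct calculation,'' so your more explicit listing of the four component equations is, if anything, a slight refinement of the same argument.
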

\begin{proof}
A wu dg functor $F\colon \K\to C$ amounts to the following morphisms in $C$: $\xi=F(f), \eta=F(g), h_{11}=F(h_0), h_{22}=F(h_1), h_{12}=F(r)$ such that
\begin{equation}\label{eq.k}
d(\xi)=0, \;\;  d\eta = 0,\;\; dh_{11} = \eta\circ\xi - id_x, \;\; dh_{22} = \xi\circ\eta - id_y, \;\; dh_{12} = h_{22}\circ\xi -\xi\circ h_{11} + p_2(1,\xi) - p_2(\xi,1)
\end{equation}
A contraction to $id_{Cone(\xi)}$ (see \eqref{eqidcone}) is the datum of a morphism $H: Cone(\xi) \rightarrow Cone(\xi)$ of degree -1 such that 
\begin{equation}\label{eq.contr}
dH = Id_{Cone(\xi)},
\end{equation}
Then $$H=(h_{11}\in C^{-1}(X,X),h_{22}\in C^{-1}(Y,Y), h_{12}\in C^{-2}(X,Y),h_{21}\in C^0(Y,X))$$
as in the diagram below
\comment
\begin{equation*}
\begin{tikzpicture}[baseline= (a).base]
\node[scale=1.3] (a) at (5,5){
\begin{tikzcd}
X \arrow[d, "h_{11}"'] \arrow[r, "\xi"] \arrow[rd, dotted, "h_{21}"] & Y \arrow[d, "h_{22}"] \arrow[dl, dotted, "h_{12}"]     \\
X \arrow[r, "\xi"]   & Y                                                  
\end{tikzcd}
};
\end{tikzpicture}
\end{equation*}
\endcomment
\begin{equation}
\xymatrix{
X\ar[rr]^{\xi}\ar[d]_{h_{11}}\ar[drr]^(0.35){{h_{12}}}&&Y\ar[d]^{h_{22}}\ar[dll]^(0.6){h_{21}}\\
X\ar[rr]_{{\xi}}&&Y
}
\end{equation}
We see by a direct calculation  that  \ref{eq.k} is equivalent to \ref{eq.contr}. 
\end{proof}

\comment
There is the following alternative construction of the wu dg category $\mathcal{K}^\prime$, which we are going to use later in the proof of \ref{firstbigthm}.

Let $\Gamma$ be the dg graph with two vertices 0 and 1 such that
\begin{equation}\label{eqgamma}
\Gamma(0,1)=\Gamma(0,0)=\Gamma(1,1)=\k,\ \  \Gamma(1,0)=0
\end{equation} 
Let $\mathcal{A}_1$ be the free wu dg category generated by $\Gamma$. Denote by $f\in\mathcal{A}_1(0,1)$ the generator. Consider $\mathcal{A}_1^\pretr$, it contains an object $\Cone(f)$. Add freely a new morphism $h\in \mathcal{A}_1^\pretr(\Cone(f),\Cone(f))$ of degree -1 such that $dh=\id_{\Cone(f)}$. Denote the resulting wu dg category by $\mathcal{A}_2$. Denote finally by $\mathcal{A}_3$ the full subcategory of $\mathcal{A}_2$ having two objects 0 and 1, clearly it is a weakly unital dg category. 

\begin{coroll}\label{corka3}
The wu dg category $\mathcal{K}^\prime$ is isomorphic to the wu dg category $\mathcal{A}_3$.
\end{coroll}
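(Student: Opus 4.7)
The plan is a Yoneda argument: I will show that both $\mathcal{K}^\prime$ and $\mathcal{A}_3$ corepresent, in $\Cat_\dgwu(\k)$, the same functor
\[
C \longmapsto \bigl\{ (\xi, H) \,:\, \xi\in C^0(x,y)\ \text{closed},\ H\in (C^{\pretr})^{-1}(\Cone(\xi),\Cone(\xi)),\ dH=\id_{\Cone(\xi)} \bigr\},
\]
from which the isomorphism $\mathcal{K}^\prime \cong \mathcal{A}_3$ follows formally. For $\mathcal{K}^\prime$ this universal property is exactly the content of Lemma \ref{kcontr}: the generators $f, g, h_0, h_1, r$ together with the relations \eqref{eqrelk} encode $\xi$ together with the matrix entries of a contraction $H$, and the $\mathcal{K}^\prime$-relations recover the four component equations of $dH = \id_{\Cone(\xi)}$ in $C^{\pretr}$.

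For $\mathcal{A}_3$, I will unfold the three-step construction in turn. First, by the adjunction $F \dashv U$ recalled in Section 2, wu dg functors $\mathcal{A}_1 = F(\Gamma) \to C$ are in bijection with closed degree zero morphisms $\xi: x \to y$ in $C$ (the image of the unique generator of $\Gamma(0,1)$). Second, by functoriality of the pretriangulated hull of Section \ref{s.def}, such a functor extends canonically to $\mathcal{A}_1^{\pretr} \to C^{\pretr}$, sending $\Cone(f)$ to $\Cone(\xi)$ and preserving the weakly unital $A_\infty$-structure supplied by Proposition \ref{propph}. Third, the pushout description $\mathcal{A}_2 = \mathcal{A}_1^{\pretr} \sqcup \langle h \mid dh = \id_{\Cone(f)} \rangle$, valid by the cocompleteness of $\Cat_\dgwu(\k)$, shows that extending further to $\mathcal{A}_2 \to C^{\pretr}$ is the same datum as choosing a degree $-1$ morphism $H: \Cone(\xi) \to \Cone(\xi)$ with $dH = \id_{\Cone(\xi)}$. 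Restricting to the full subcategory $\mathcal{A}_3 \subseteq \mathcal{A}_2$ on $\{0, 1\}$ lands automatically in $C \subseteq C^{\pretr}$ (the inclusion is fully faithful, so $C^{\pretr}(x, y) = C(x, y)$); conversely, any wu dg functor $\mathcal{A}_3 \to C$ lifts uniquely back up this chain, yielding the desired natural bijection between wu dg functors $\mathcal{A}_3 \to C$ and pairs $(\xi, H)$.

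The main obstacle will be that the universal property of $(-)^{\pretr}$ in the weakly unital setting is not spelled out explicitly in Section \ref{s.def}, so one must verify by hand that the induced assignment on twisted complexes respects the nontrivial $A_\infty$-structure $P$ of Proposition \ref{propph}; in particular the identity correction $\varepsilon = p_2(f, 1_0) - p_2(1_1, f)$ of \eqref{eqidcone} is precisely what produces the $p_2$-terms on the right hand side of the relation for $dr$ in \eqref{eqrelk}. Once this verification is carried out, Yoneda provides the canonical isomorphism $\mathcal{K}^\prime \cong \mathcal{A}_3$, with generators matched by $f \mapsto f$, $g \mapsto H_{10}$, $h_0 \mapsto -H_{00}$, $h_1 \mapsto -H_{11}$, and $r$ sent to the appropriate $\pm H_{01}$ entry to align all signs between the two presentations.
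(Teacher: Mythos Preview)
Your approach is essentially the paper's own: both argue via Yoneda that $\mathcal{K}^\prime$ and $\mathcal{A}_3$ corepresent the same functor $C\mapsto\{(\xi,H)\}$, invoking Lemma~\ref{kcontr} for the $\mathcal{K}^\prime$ side and unwinding the construction of $\mathcal{A}_3$ for the other. The paper is terser---it simply asserts that $\mathcal{A}_3$ is a free wu dg category, so that a functor $\mathcal{A}_3\to C$ amounts to $(\xi,H)$, and then applies Yoneda---while you spell out the intermediate steps through $\mathcal{A}_1\to\mathcal{A}_1^{\pretr}\to\mathcal{A}_2$.

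One small point: your converse direction (``any wu dg functor $\mathcal{A}_3\to C$ lifts uniquely back up this chain'') is where the work hides. What one really needs is that $\mathcal{A}_3$ is freely generated over $\mathcal{O}'$ by $f$ together with the four matrix entries $h_{ij}$ of $h$, with differentials dictated by $dh=\id_{\Cone(f)}$; this is what makes $\Cat_\dgwu(\mathcal{A}_3,C)$ identifiable with pairs $(\xi,H)$ directly, without passing through an extension to $\mathcal{A}_2\to C^{\pretr}$. The paper takes this freeness for granted; your route through functoriality of $(-)^{\pretr}$ is a valid alternative but, as you note, requires checking compatibility with the $A_\infty$ structure $P$. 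Also, the paper's matching in Lemma~\ref{kcontr} is $h_0\mapsto h_{11}$, $h_1\mapsto h_{22}$, $r\mapsto h_{12}$, $g\mapsto h_{21}$ with no extra signs, so your proposed $h_0\mapsto -H_{00}$ etc.\ should be rechecked against \eqref{eq.k}.
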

\begin{proof}
	One easily describes the set $\Cat_\dgwu(\mathcal{A}_3,C)$ for a wu dg category $C$, because $\mathcal{A}_3$ is a free wu dg category. Thus, to define a dg functor $F\colon \mathcal{A}_3\to C$ ammounts to the data of a morphism $\xi\in C^0(x,y)$ for some objects $x,y\in C$, and a morphism $H\in C^\pretr(\Cone(f),\Cone(f))$ of degree -1 such that $dH=\id_{\Cone(f)}$. Recall that $\id_{\Cone(f)}$ is given by \eqref{eqidcone}, cf. \eqref{eq.contr}. Unwinding the definition, we get the same equations as in \eqref{eq.k}.
	
	Now the statement follows from the Yoneda lemma. 
\end{proof}
\endcomment

\begin{prop}\label{firstbigthm}
	The wu dg category $\K$ has the same homotopy type as the Kontsevich dg category $\mathcal{K}$. More precisely, regarding $\mathcal{K}$ as an object in $\Cat_\dgwu(\k)$, the natural projection $p\colon \K\to\mathcal{K}$, sending all $p_n(-)$, $n\ge 2$, to 0, is a weak equivalence. 
\end{prop}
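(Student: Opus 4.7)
The plan is to establish condition (W1): that $p$ induces quasi-isomorphisms on all Hom complexes $\K(i,j) \to \mathcal{K}(i,j)$ for $i,j \in \{0,1\}$. Condition (W2) then follows automatically since $p$ is the identity on objects. By Lemma \ref{lemmadrk}, $H^{\ast}(\mathcal{K}(i,j)) \cong \k$ concentrated in degree $0$, so the task reduces to computing $H^{\ast}(\K(i,j))$ and checking the compatibility of $p$ on $H^0$.

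The strategy is to present $\K$ as the weakly unital analog of a parallel construction of $\mathcal{K}$, and compare the two via Theorem \ref{compop}. Let $\Gamma$ be the non-unital dg graph on $\{0,1\}$ with one edge $f \in \Gamma^{0}(0,1)$, and set $\mathcal{B} := F(\Gamma_+) \cong T_{\mathcal{O}^\prime}(\Gamma)$, the free wu dg category on $\Gamma$. Form the wu pretriangulated hull $\mathcal{B}^{\pretr}$ of Section \ref{s.def}, and freely adjoin a morphism $H \in \mathcal{B}^{\pretr}(\Cone(f),\Cone(f))^{-1}$ with $dH = \id_{\Cone(f)}$, producing a wu dg category $\mathcal{B}'$. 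By Lemma \ref{kcontr} combined with Yoneda, the full wu subcategory of $\mathcal{B}'$ on $\{0,1\}$ is canonically isomorphic to $\K$: both represent the functor $C \mapsto \{(\xi, h) : \xi \text{ closed of degree } 0,\ h \text{ a contraction of } \Cone(\xi) \text{ in } C^{\pretr}\}$, with $h_0, h_1, g, r$ corresponding to the four components of $H$. The parallel strictly unital construction (free strictly unital dg category on $\Gamma$, its usual pretriangulated hull, strict contraction of $\Cone(f)$) recovers Kontsevich's $\mathcal{K}$, and $p$ is induced by this parallelism.

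The comparison then reduces to a chain of three quasi-isomorphisms. First, Theorem \ref{compop} and the K\"unneth property over the field $\k$ give a Hom-wise quasi-iso $\mathcal{B} = T_{\mathcal{O}^\prime}(\Gamma) \to T_{\Assoc_+}(\Gamma)$. Second, forming the pretriangulated hull preserves Hom-quasi-isos: the differential on $C^{\pretr}(X, X')$ is $d_C \phi + q' \phi - (-1)^{|\phi|} \phi q$ (Definition \ref{defwuph1}), with $q, q'$ fixed, so a filtered spectral sequence argument by the number of summands in $X, X'$ reduces the claim to the quasi-iso at the level of $C$. Third, the adjunction of the contraction $H$ is a cell attachment whose effect on Hom complexes is analyzable by filtering by the number of occurrences of $H$; the associated graded is a free extension whose Hom complex is a tensor product, on which K\"unneth preserves quasi-isos.

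The main obstacle is the compatibility needed for step three: the cell attachments in the wu and strictly unital settings must fit into a commutative square in order for the resulting map to be a quasi-iso. The key observation is that the wu identity $\id_{\Cone(f)}^{\wu} \in \mathcal{B}^{\pretr}$, given by \eqref{eqidcone}, maps under the comparison $\mathcal{B}^{\pretr} \to \widetilde{\mathcal{B}}^{\pretr}$ (the strictly unital analog) to the strict identity $\id_{\Cone(f)}$, since the extra terms $p_2(f,1) - p_2(1,f)$ vanish on the strictly unital side where $p_2 = 0$. Hence the boundary conditions for $H$ match under the comparison, the pushout square is commutative, and composing the three quasi-isomorphisms yields the desired $\K(i,j) \to \mathcal{K}(i,j)$.
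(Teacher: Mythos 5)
Your overall architecture is sound, but it is genuinely different from the proof in the paper, and one of its three steps has a real gap. The paper's proof is much shorter: it filters $\K(a,b)$ and $\mathcal{K}(a,b)$ directly by the number of occurrences of the generator $r$ (counting occurrences inside $p_j(\dots,r,\dots)$ on the weakly unital side); since $dr$ contains no $r$-factors it drops filtration degree, so on the associated graded one may pretend $dr=0$, the deformation term $p_2(1,f)-p_2(f,1)$ --- the only discrepancy between the two presentations --- disappears, and the associated graded map is a map between two semi-free categories with identical generators and differentials, where the operadic quasi-isomorphism of Theorem \ref{compop} applies. Your route instead realizes $\K$ as the full subcategory on $\{0,1\}$ of the category obtained from $\mathcal{B}^{\pretr}$ by attaching a contraction $H$ of $\Cone(f)$; the identification via Lemma \ref{kcontr} plus Yoneda is correct, and your observation that the weak identity $\id_{\Cone(f)}$ of \eqref{eqidcone} maps to the strict identity under strictification --- so that the two cell attachments fit into a commutative pushout square --- is exactly the right compatibility to check. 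This buys a conceptual explanation of where the deformed formula for $dr$ in \eqref{eqrelk} comes from, at the cost of three separate quasi-isomorphism arguments (operadic comparison, pretriangulated hull, cell attachment) instead of one.

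The gap is in your third step. After filtering by the number of occurrences of $H$, the associated graded of the weakly unital Hom complex is \emph{not} a tensor product of complexes: a monomial such as $p_s(g_2,H,g_1)$ is an indecomposable operadic expression, not a product $g_2\cdot H\cdot g_1$, and the operadic part of the differential (the $A_\infty$ relation for $dp_s$) preserves the number of $H$'s while changing the arity $s$, so it acts nontrivially on your associated graded and is not of tensor-product form; K\"unneth does not apply as stated. One needs a second, finer filtration --- for instance by $\sum(s-1)$ taken over all operations $p_s(\dots,H,\dots)$ occurring in a monomial, which the operadic component of the differential strictly decreases --- before the associated graded becomes an honest tensor product
$\mathcal{B}^{\pretr}(\Cone(f),b)\otimes \k H\otimes \mathcal{B}^{\pretr}(\Cone(f),\Cone(f))\otimes\dots\otimes \k H\otimes\mathcal{B}^{\pretr}(a,\Cone(f))$
with the inner differential only, to which the K\"unneth comparison applies. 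You should also justify convergence of each spectral sequence (the filtrations are exhaustive, but boundedness in each total degree must be checked). With these repairs the argument goes through.
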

\comment

\begin{proof}

Let $A$ be the unital dg category with a single object $0$, and with morphisms $A(0,0)=\k \id$.  Likewise, denote by $A^\prime$ the weakly unital dg category, generated by the graph with a single vertex and a single (identity) arrow. 

Consider the commutative diagram:
\begin{equation}
\xymatrix{
\mathcal{A}\ar[d]_{p_1}\ar[r]^{\kappa}&\mathcal{K}^\prime\ar[d]_{p_2}\ar[r]&\mathcal{I}_2\ar[d]_{p_3}\\
A\ar[r]^{\sim}&\mathcal{K}\ar[r]^{\sim}&I_2
}
\end{equation} 

        \begin{equation*}
\begin{tikzpicture}[baseline= (a).base]
\node[scale=1.3] (a) at (5,5){
        \begin{tikzcd}
                      A \arrow[d, "\sim"] \arrow[r, "\kappa"]      & \K \arrow[d] \arrow[r] & \mathcal{I}_2 \arrow[d, "\sim"]  \\
A \arrow[r, "\sim"] & \mathcal{K} \arrow[r, "\sim"]            & I_2
\end{tikzcd}
};
\end{tikzpicture}
\end{equation*}

Since $A$ and $\mathcal{A}$ are freely generated as strictly unital (resp., weakly unital) dg categories by the same dg graph, the quasi-isomorphism of dg-operad $\mathcal{O}'$ and $\mathcal{A}ssoc_+$ ??? implies that $\mathcal{A}$ is quasi- equivalent to $A$. The same reasoning holds for $\mathcal{I}_2$ and $I_2$. That is, $p_1$ and $p_3$ are quasi-equivalences. 
The bottom row arrows are quasi-equivalences by ???. \\

The statement is that $p_2$ is a quasi-equivalence. 
By 2-of-3, it is enough to prove that $\kappa: \mathcal{A} \rightarrow \K$ is a quasi-equivalence.

In order to describe the hom-complexes of $\K$ we use the description of $\K$ given in Corollary \ref{corka3}, as the full wu dg subcategory of $\mathcal{A}_2$. 
Let $a,b\in \{0,1\}$, consider an ascending filtration $\{F_n(a,b)\}_{n\ge 0}$ of $\K(a,b)$:
	\begin{center} 
	$F_n(a,b)$ := \{morphisms from $a$ to $b$ containing $\le n$ factors $h$\}
	\end{center}
We have $F_0(a,b) = \mathcal{A}_1^\pretr(a,b)$. 
Consider the spectral sequence on the complex $\K(0,a)$, $a\in\{0,1\}$, associated with the filtration $F_\ldot(0,a)$. 
Associate the $x$-axis with the filtration, and the $y$-axis with the cohomological degree. Then the spectral sequence lives in the III quarter, and therefore converges.

Then the result follows from Lemmas \eqref{lem.tab} and \eqref{lemmafil1} below. 
	\begin{lemma}\label{lem.tab}
		For any $a\in \{0,1\}$
	one has: $\mathcal{A}^\pretr_1(0,a)\sim I_2(0,a)=\k$ (where $\sim$ stands for a quasi-isomorphism). Moreover, $\mathcal{A}^\pretr_1(0,\Cone(f))$ is acyclic. 
	\end{lemma}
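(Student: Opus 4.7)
The plan is to apply Theorem \ref{compop} to reduce the computation to the strictly unital setting. Denote by $T_2 = F_{\Assoc_+}(\Gamma)$ the free strictly unital dg category generated by $\Gamma$; explicitly, $T_2(0,0) = \k\cdot\id_0$, $T_2(1,1) = \k\cdot\id_1$, $T_2(0,1) = \k\cdot f$ and $T_2(1,0) = 0$. The natural map of dg operads $\mathcal{O}^\prime \to \Assoc_+$ of Theorem \ref{compop} induces a wu dg functor $\kappa\colon \mathcal{A}_1 \to T_2$ (where $T_2$ is viewed as weakly unital via the embedding of Example \ref{exwusu}). The first step is to verify that, since $\mathcal{A}_1$ is freely generated by $\Gamma$ over $\mathcal{O}^\prime$, the quasi-isomorphism $\mathcal{O}^\prime \to \Assoc_+$ implies that $\kappa$ is a quasi-isomorphism on every hom-complex.

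Granting this, the first half of the lemma follows at once, because the natural embedding $\mathcal{A}_1 \hookrightarrow \mathcal{A}_1^\pretr$ is fully faithful on its image, so $\mathcal{A}_1^\pretr(0, a) = \mathcal{A}_1(0, a) \sim T_2(0, a) = \k$ for $a \in \{0, 1\}$.

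For the acyclicity of $\mathcal{A}_1^\pretr(0, \Cone(f))$, Definition \ref{defwuph1} gives, as a graded vector space,
\begin{equation}
\mathcal{A}_1^\pretr(0, \Cone(f)) = \mathcal{A}_1(0, 0) \oplus \mathcal{A}_1(0, 1)[-1],
\end{equation}
equipped with the cone-type differential $d(\alpha, \beta) = (d\alpha,\, d\beta + f\circ\alpha)$. This is the mapping cone of the chain map $(f\circ -)\colon \mathcal{A}_1(0,0) \to \mathcal{A}_1(0,1)$. Via $\kappa$ this corresponds to the map $T_2(0,0) \to T_2(0,1)$ sending $\id_0 \mapsto f\circ\id_0 = f$ (using strict unitality of $T_2$), which is manifestly an isomorphism of one-dimensional complexes concentrated in degree zero. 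Hence $(f\circ -)$ is a quasi-isomorphism on $\mathcal{A}_1$, and its cone is acyclic.

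The main obstacle I anticipate is the initial step: translating the operadic quasi-isomorphism of Theorem \ref{compop} into a componentwise quasi-isomorphism of hom-complexes. This requires analysing how $T_{\mathcal{O}^\prime}(-)$ distributes over the hom-complexes of the free dg graph $\Gamma$ and verifying that the identification $j_x = \id_x$ made in defining $F(\Gamma)$ is compatible with the operad map $\mathcal{O}^\prime \to \Assoc_+$. Once these compatibility points are settled, the cone computation is immediate.
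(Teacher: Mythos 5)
Your proposal is correct and follows essentially the same route as the paper: both arguments reduce to the strictly unital free category on $\Gamma$ via the operadic quasi-isomorphism $\mathcal{O}^\prime\to\Assoc_+$ together with full faithfulness of $\mathcal{A}_1\hookrightarrow\mathcal{A}_1^\pretr$, and both treat $\mathcal{A}_1^\pretr(0,\Cone(f))$ through the mapping-cone/distinguished-triangle structure of $C^\pretr(0,-)$ (the paper phrases this as a map of triangles plus the 5-lemma, you phrase it as showing $f_*$ is a quasi-isomorphism directly, which is the same argument). The compatibility point you flag at the end is also left implicit in the paper's proof, so no gap relative to it.
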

	\begin{proof}[Proof of Lemma \ref{lem.tab}]
	We remark that the canonical inclusion $\C \hookrightarrow \C^{pretr}$ is fully faithful. Therefore 
	\begin{equation}\label{eq.std}
\mathcal{A}^\pretr_1(a,b) \sim \mathcal{A}_1(a,b)
	\end{equation}
	Since the operads $\mathcal{O}'$ and $A ssoc_+$ are quasi-isomorphic, it follows that the complexes $\mathcal{A}_1(a,b)$ are quasi-isomorphic to the corresponding complexes $A_1(a,b)$ in the strictly unital case. (Where for the strictly unital case $A_1$ is the unital dg category generated by the dg graph $\Gamma$, see \eqref{eqgamma}). It gives the first statement.\\
	For the second statement, remark that for any closed morphism $f\colon x\to y$ in a pretriangulated dg category $C$, for any object $a$ in $C$  there is a distignuished triangle in the category of complexes:
	\begin{equation}
C(a,x)\xrightarrow{f_*}C(a,y)\to C(a,\Cone(f))\xrightarrow{+1}
	\end{equation}
	It gives two distignuished triangles in the category of complexes and a map of distignuished triangles:
	\begin{equation}
	\xymatrix{
	\mathcal{A}^\pretr_1(0,0)\ar[r]\ar[d]_{t_{1*}}&\mathcal{A}^\pretr_1(0,1)\ar[r]\ar[d]_{t_{2*}}&\mathcal{A}^\pretr_1(0,\Cone(f))\ar[r]^{\hspace{12mm}+1}\ar[d]_{t_{3*}}&\\
	A^\pretr_1(0,0)\ar[r]&A^\pretr_1(0,1)\ar[r]&A^\pretr_1(0,\Cone(f))\ar[r]^{\hspace{12mm}+1}&
	}
	\end{equation}
We pass to the long exact sequences in cohomology and use the proven statements that $t_{1*}$ and $t_{2*}$ are quasi-isomorphisms. It follows from the 5-lemma that $t_{3*}$ is a quasi-isomorphism as well.

	Now we can exploit the pretriangulated structure: clearly \[ x \rightarrow y \rightarrow Cone(f) \rightarrow x[1] \]
	is a distinguished triangle, and since $\forall a \in \{x,y,Cone(f)\}, A_1(a, -)$ and $A(a, -)$ are homological functors, we get the long exact sequences:
	\begin{equation}\label{first.les}
	.. \rightarrow A_1(a,x) \rightarrow A_1(a,y) \rightarrow A_1(a,Cone(f)) \rightarrow .. 
	\end{equation}
	and 
	\begin{equation}\label{second.les}
	.. \rightarrow A_1(a,x) \rightarrow A_1(a,y) \rightarrow A_1(a,Cone(f)) \rightarrow .. 
	\end{equation}
	We can consider the projection from \ref{first.les} to \ref{second.les} which sends all the $p_n(..)$ to $0$.
	By the remark above, when $a \in \{x,y\}$ we have got quasi isomorphisms $A_1(a,x) \cong A_1(a,x)$ and $A_1(a,y) \cong A_1(a,y)$:
	\begin{equation*}
\begin{tikzpicture}[baseline= (a).base]
\node[scale=1] (a) at (5,5){
	\begin{tikzcd}
.. \arrow[r] & A_1(a,x) \arrow[d, "\sim"] \arrow[r] & A_1(a,y) \arrow[d, "\sim"] \arrow[r] & A_1(a,Cone(f)) \arrow[d] \arrow[r] & .. \\
.. \arrow[r] &  A_1(a,x) \arrow[r]           &  A_1(a,y) \arrow[r]           &  A_1(a,Cone(f)) \arrow[r]           & ..
\end{tikzcd}
};
\end{tikzpicture}
\end{equation*}
        which gives us the quasi isomorphism $A_1(a,Cone(f)) \cong A_1(a,Cone(f))$.
        Similarly we have that $A_1(Cone(f),a) \cong A_1(Cone(f),a), \forall a \in \{x,y\}$. \\
        Considering this other map:
        \begin{equation*}
\begin{tikzpicture}[baseline= (a).base]
\node[scale=1] (a) at (5,5){
	\begin{tikzcd}
.. \arrow[r] & A_1(Cone(f),Cone(f)) \arrow[d] \arrow[r] & A_1(y,Cone(f)) \arrow[d, "\sim"] \arrow[r] & A_1(x,Cone(f)) \arrow[d, "\sim"] \arrow[r] & .. \\
.. \arrow[r] &  A_1(Cone(f),Cone(f)) \arrow[r]           &  A_1(y,Cone(f)) \arrow[r]           &  A_1(x,Cone(f)) \arrow[r]           & ..
\end{tikzcd}
};
\end{tikzpicture}
\end{equation*}
we get the last desired quasi isomorphism: $A_1(Cone(f),Cone(f)) \cong A_1(Cone(f),Cone(f))$.

	\end{proof}

\begin{lemma}\label{lemmafil1}
The quotient-complexes $F_n(0,a)/F_{n-1}(0,a)$, $a\in \{0,1\}$, $n\ge 1$, are acyclic.	
\end{lemma}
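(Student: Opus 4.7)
The plan is to analyze the associated graded $F_n(0,a)/F_{n-1}(0,a)$ directly and reduce acyclicity to the acyclicity of $\mathcal{A}_1^{\pretr}(0,\Cone(f))$ already established in Lemma \ref{lem.tab}. First I would note that the filtration $F_\bullet$ is compatible with the differential of $\mathcal{K}'(0,a)=\mathcal{A}_2(0,a)$: composition, $p_k$ operations, and the internal differential on $\mathcal{A}_1^{\pretr}$ all preserve the number of $h$-factors, while $dh=\id_{\Cone(f)}$ strictly decreases it. Hence on the associated graded the differential acts solely through the $\mathcal{A}_1^{\pretr}$-slots of a typical word, with each occurrence of $h$ treated as a formal closed generator.

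Next I would describe a canonical presentation of $F_n(0,a)/F_{n-1}(0,a)$. Any morphism with exactly $n$ occurrences of $h$ must first arrive from $0$ at $\Cone(f)$ before the leftmost $h$ can be inserted; accordingly such a word factors (uniquely in the free construction) as
\[
\alpha_{n+1}\;\circ\; h\;\circ\;\alpha_n\;\circ\;h\;\circ\;\cdots\;\circ\;h\;\circ\;\alpha_1,
\]
with $\alpha_1\in \mathcal{A}_1^{\pretr}(0,\Cone(f))$, $\alpha_i\in \mathcal{A}_1^{\pretr}(\Cone(f),\Cone(f))$ for $1<i\le n$, and $\alpha_{n+1}\in \mathcal{A}_1^{\pretr}(\Cone(f),a)$. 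Consequently $F_n/F_{n-1}$ is isomorphic, as a complex over $\k$, to the tensor product
\[
\mathcal{A}_1^{\pretr}(0,\Cone(f))\;\otimes_\k\; V_n(a),
\]
where $V_n(a)$ is the complex assembled from the remaining slots. Since $\mathcal{A}_1^{\pretr}(0,\Cone(f))$ is acyclic by Lemma \ref{lem.tab} and $\k$ is a field, the tensor product is acyclic as well, proving the lemma.

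The main obstacle to making this rigorous is the presence of the $p_k$ operations from the weakly unital structure of $\mathcal{A}_2$, which can a priori interlace weak units $1_{\Cone(f)}$ with the $h$-factors and thus obstruct the clean factorization above. To deal with this I would work with the filtration by the number of $h$'s in the free wu dg category $\mathcal{A}_2$ and observe that, since the operad $\mathcal{O}^\prime$ acts on the generators formally, every summand of a $p_k$-expression still begins (reading right-to-left) with some $\alpha_1\in\mathcal{A}_1^{\pretr}(0,\Cone(f))$; the $p_k$-data only decorate the remaining part of the word. Repackaging these decorations into the definition of $V_n(a)$ then restores the tensor product decomposition and completes the argument.
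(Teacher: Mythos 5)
Your overall skeleton is the same as the paper's: exhibit the associated graded as a tensor product of complexes with $\mathcal{A}_1^{\pretr}(0,\Cone(f))$ as one factor, and conclude by Lemma \ref{lem.tab} and K\"unneth. But there is a genuine gap at the decisive step, and you have in fact located it yourself without resolving it. The claim that ``on the associated graded the differential acts solely through the $\mathcal{A}_1^{\pretr}$-slots'' does not follow from the fact that only $dh$ lowers the $h$-count. The differential of a word containing $p_s(g_1,\dots,h,\dots,g_s)$ is governed by the $A_\infty$ relations (iv) of \eqref{orel} (cf.\ \eqref{eqp2}, \eqref{eqp3}), and besides the inner terms $p_s(\dots,dg_j,\dots,h,\dots)$ and the term with $dh$ (which drops to $F_{n-1}$), it contains composition terms $p_{s-1}(\dots,g_j\circ g_{j+1},\dots,h,\dots)$, terms where $h$ is composed with a neighbour, and splitting terms $p_a(\dots)\circ p_b(\dots,h,\dots)$. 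All of these preserve the number of $h$'s, hence survive in $F_n/F_{n-1}$, and none of them is ``slot-wise'': they move material between your factors $\alpha_i$ and change the arity of the $p$'s. So $F_n/F_{n-1}$ with its induced differential is \emph{not} a tensor product of complexes, and your proposed remedy of ``repackaging the decorations into $V_n(a)$'' cannot work, because the obstruction lives in the differential, not in the underlying graded vector space decomposition.

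The paper's proof supplies exactly the missing device: a second ascending filtration $\phi_\ell$ of $F_n$ by the quantity $S(\gamma)=\sum(s-1)$, summed over all occurrences of $p_s(\dots,h,\dots)$ in a monomial $\gamma$. The key observation is that every ``operadic'' component of the differential listed above \emph{strictly lowers} $S$, while only the inner differential of $\mathcal{A}_1^{\pretr}$ preserves it. Hence it is only on the associated graded of this \emph{second} filtration that one gets the tensor product
$\mathcal{A}^\pretr_1(\Cone(f),a)\otimes h\otimes\mathcal{A}^\pretr_1(\Cone(f),\Cone(f))\otimes\dots\otimes h\otimes\mathcal{A}^\pretr_1(0,\Cone(f))$
with slot-wise differential, to which K\"unneth and Lemma \ref{lem.tab} apply; one then concludes via the convergence (for dimensional reasons) of the associated spectral sequence. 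To repair your argument you would need to introduce this secondary filtration (or an equivalent device, such as an explicit contracting homotopy compatible with the operadic terms), not merely redefine the tensor factor $V_n(a)$.
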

\begin{proof}
For a given $n\ge 1$, consider an ascending filtration $\{\overline{\phi}_{\ell,n}\}_{\ell\ge 0}$, defined as follows. For a ``monomial'' element $\gamma$ in $F_n(a,b)$ set 
\begin{equation}
S(\gamma)=\sum_{\substack{{\text{all occurences of}}{\text{ $p_s(...,h,...)$ in $\gamma$}}}}
(s-1)
\end{equation}	
where the sum is taken over all $p_s(...)$ in $\gamma$, having $h$ as one of its arguments. Define
\begin{equation}
\phi_\ell F_n(a,b)=\{\text{the linear combination of all elements  $\gamma\in F_n(a,b)$ with $S(\gamma)\le\ell$} \}
\end{equation}
\begin{equation}
\overline{\phi}_{\ell,n}(a,b)=\frac{\phi_\ell F_n(a,b)}{F_{n-1}(a,b)\cap\phi_\ell F_n(a,b)}
\end{equation}
\\	
A key observation is that the ``operadic'' component of the differential {\it strictly lower} $\ell=\sum(s-1)$. The only component which preserves it comes from the ``inner" differential in $A_1(a,b).$ \\
It follows that $\overline{\phi}_{\ell,n}$ is the {\it tensor product of complexes}
\begin{equation}
\overline{\phi}_{\ell,n}(0,a)= \mathcal{A}^\pretr_1(Cone(f),a) \otimes h \otimes  \mathcal{A}^\pretr_1(Cone(f),Cone(f)) \otimes \dots \otimes h \otimes \mathcal{A}^\pretr_1(0,Cone(f))
\end{equation}
with exactly $n\ge 1$ factors $h$ and having the number $S$ equal to $\ell$, and the differential is the ``inner'' differential in $\mathcal{A}_1^\pretr$, not acting on $h$.  

This is indeed the case, since the differential in $\phi_\ell / \phi_{\ell - 1}$ on a general element \\ $p_s(g_1, .., h, .., g_s)\cdot .. \cdot p_m(f_1, .., h, .., f_m),$ where $g_j, f_i$ belongs to an appropriate $A_1(a,b)$ acts like this:
\[ \sum_j p_s(g_1, .., dg_j, .., h, .., g_s) \cdot \; .. \; \cdot p_m(f_1, .., h, .., f_m) + \;...\; + \sum_i p_s(g_1, .., h, .., g_s)\cdot \;..\; \cdot p_m(f_1, .., df_i, .., h, .., f_m), \]
where the operadic terms $p_s$ are not relevant, so that $\phi_\ell / \phi_{\ell - 1}$ are equivalent (as complexes) to:
\[ A_1(Cone(f),y) \otimes h \otimes  A_1(Cone(f),Cone(f)) \otimes .. \otimes h \otimes A_1(x,Cone(f)). \]

By Lemma \ref{lem.tab} the complex $\mathcal{A}^\pretr_1(0,Cone(f))$ is acyclic, so 
$\overline{\phi}_{\ell,n}$ is acyclic by the K\"{u}nneth formula. 
\\
The spectral sequence associated with the filtration $\overline{\phi}_{\ell,n}$ on $F_n(0,a)/F_{n-1}(0,a)$ clearly converges by dimensional reasons. 
It follows that, for $n \geq 1$, the complex $F_n / F_{n-1}$ is acyclic.

\end{proof}

\end{proof}

\endcomment

\begin{proof}
	Consider ascending filtrations $\{\Phi_i(a,b)\}_{i\ge 0}$ of $\K(a,b)$ and $\{F_i(a,b)\}_{i\ge 0}$ of $\mathcal{K}(a,b)$, $a,b\in\{0,1\}$, such that
	$p(\Phi_i(a,b))\subset F_i(a,b)$, $i\ge 0$. We prove that the corresponding spectral sequences converge, and that the map $p$ induces an isomorphism in the $E_1$ sheets. The result will follow from the latter statement. 
	
	Define $F_i(a,b)$ as the dg vector space generated by all monomials with $\le i$ factors $r$. Define $\Phi_i(a,b)$ similarly, but we count all occurances of $r$ in expressions $p_j(...,r,...)$ as a ``factor $r$''. 
	It is clear that $d(F_i(a,b))\subset F_i(a,b)$ and $d(\Phi_i(a,b))\subset\Phi_i(a,b)$, and 
	that $p(\Phi_i(a,b))\subset F_i(a,b)$. 
	
	Also, it is clear that both spectral sequences converge, by dimensional reasons (the spectral sequences live in the quarter ``$x\le 0, y\le 0$''). 
	
	We have:
	\begin{lemma}\label{lemmae1}
	The map $p$ induces an isomorphism in the $E_1$ sheets.
	\end{lemma}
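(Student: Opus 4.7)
The plan is to identify the $E_0$-pages of both spectral sequences explicitly and then apply Theorem \ref{compop}. The key observation is that in both $\K$ and $\mathcal{K}$, the differential $dr$ contains no occurrence of $r$, while the differentials $df=dg=0$, $dh_0=g\circ f-\id_0$, and $dh_1=f\circ g-\id_1$ also involve no $r$. Hence on the associated graded of $\Phi_i$ (resp.\ $F_i$) the contribution of $dr$ to lower filtration is killed, and the $E_0$-page can be identified with the hom-complex of an auxiliary wu dg category $\widetilde{\K}$ (resp.\ strict dg category $\widetilde{\mathcal{K}}$) defined by the same generators as $\K$ (resp.\ $\mathcal{K}$) but with $dr$ set to zero. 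By construction, the map $\gr p$ on the $E_0$-page is induced from the evident projection $\widetilde{\K}\to\widetilde{\mathcal{K}}$ killing all $p_n$, $n\ge 2$.

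Next I would introduce a secondary filtration on the hom-complexes of $\widetilde{\K}$ and $\widetilde{\mathcal{K}}$ by the number of factors $h_0, h_1$. Since $dh_0$ and $dh_1$ involve no $h$'s, they strictly drop the secondary degree, and on the doubly-associated graded every one of the five generators $f, g, h_0, h_1, r$ becomes closed. At this level, both hom-complexes identify with the free wu (resp.\ strict) dg category on the dg graph $\Gamma$ with vertices $\{0,1\}$ and the five generators equipped with zero differential, i.e., with the monadic construction $T_{\mathcal{O}^\prime}(\Gamma)/(j_x - \id_x)$ from Section 2 (resp.\ its $\Assoc_+$-analogue). The induced map is then the one coming from the operad projection $\mathcal{O}^\prime \to \Assoc_+$.

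Theorem \ref{compop} asserts that this operad map is a quasi-isomorphism, and since the hom-chains of $\Gamma$ are free of finite dimension with zero differential, a K\"unneth-type argument yields that the induced map on the doubly-associated graded is a quasi-isomorphism. Both spectral sequences converge on each hom-complex, because every generator has non-positive cohomological degree and so in each total degree only finitely many $r$'s and $h$'s can occur. Running first the secondary spectral sequence and then the primary one, we conclude that $p$ induces an isomorphism on the $E_1$-sheets of $\Phi_i$ and $F_i$, as required.

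The main obstacle will be verifying cleanly that on the doubly-associated graded the hom-complexes really coincide with the free wu (resp.\ strict) dg categories on $\Gamma$ --- i.e., that once every generator is closed, no hidden interaction between the generators and the weak-unit operations of $\mathcal{O}^\prime$ survives after passing to the associated graded. This amounts to a careful unwinding of the monadic description $T_{\mathcal{O}^\prime}(\Gamma)/(j_x - \id_x)$ recalled in Section 2, together with the verification that the two filtrations (by $r$-count and by $h$-count) are compatible with the operadic composition.
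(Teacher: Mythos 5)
Your proof follows essentially the same route as the paper's: the filtration by $r$-count kills the contribution of $dr$ on the associated graded, reducing the computation of $E_1$ to the auxiliary categories $\widetilde{\K}$ and $\widetilde{\mathcal{K}}$ with $dr=0$, after which the operadic quasi-isomorphism (Theorem \ref{compop}) gives the result. Your secondary filtration by the number of $h_0,h_1$ factors only makes explicit the reduction to closed generators that the paper leaves implicit when it invokes the operad comparison, so the two arguments coincide in substance.
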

	\begin{proof}
	For both cases, the differential in $E_0$ is the same as it would be if $dr=0$. Therefore, to compute $E_1$ we assume that $dr=0$ for both cases.
	
	Denote by $\tilde{\K}$ (corresp. $\tilde{\mathcal{K}}$) the semi-free wu dg category (corresp., the semi-free unital dg category) with two objects $\{0,1\}$, the generators $f,g,h_0,h_1,r$, as in \eqref{eqkstrict}, \eqref{eqrelk}, and in which the differential of the generators is given by {\it the same} formulas:
	\begin{equation}\label{eqk0}
	\begin{aligned}
	\ &df=dg=0\\
	&dh_0=g\circ f -\id_0, \;\;\; dh_1=f\circ g-\id_{1}\\
	&dr = 0
	\end{aligned}
	\end{equation}
	Now the statement follows from the fact that the projection of dg operads $\mathcal{O}^\prime\to \mathcal{O}$ is a quasi-isomorphism in any airity, see [PS, Section 4] (here $\mathcal{O}$ is a dg operad, defined analogously to $\mathcal{O}^\prime$, see loc.cit.). 
	\end{proof}
	
	\end{proof}

\begin{remark}{\rm
The argument employed in the proof of Lemma \eqref{lemmae1}
can not be used directly for $p\colon \K\to \mathcal{K}$ (without any spectral sequence argument), because $dr$ is given by {\it different} formulas in \eqref{eqkstrict} and \eqref{eqrelk}. More precisely,  the equation for $dr$ 	for $\K$ is a {\it deformation} of that for $\mathcal{K}$. Consequently, it does not follow directly from the quasi-isomorphism $\mathcal{O}^\prime\to\mathcal{O}$ that $p\colon \K(a,b)\to\mathcal{K}(a,b)$ is a quasi-isomorphism.
}
\end{remark}

\comment
We will use the following alternative description of $\K$.

Let $\Gamma$ be the dg graph with two vertices 0 and 1 such that
\begin{equation}\label{eqgamma}
\Gamma(0,1)=\Gamma(0,0)=\Gamma(1,1)=\k,\ \  \Gamma(1,0)=0
\end{equation} 
Let $\mathcal{A}_1$ be the free wu dg category generated by $\Gamma$. Denote by $f\in\mathcal{A}_1(0,1)$ the generator. Consider $\mathcal{A}_1^\pretr$, it contains an object $\Cone(f)$. Add freely a new morphism $h\in \mathcal{A}_1^\pretr(\Cone(f),\Cone(f))$ of degree -1 such that $dh=\id_{\Cone(f)}$. Denote the resulting wu dg category by $\mathcal{A}_2$. Denote finally by $\mathcal{K}^\pprime$ the full subcategory of $\mathcal{A}_2$ having two objects 0 and 1, clearly it is a weakly unital dg category. 

\begin{prop}
	There is a dg functor $p\colon \K\to\mathcal{K}^\pprime$ which is an isomorphism.
\end{prop}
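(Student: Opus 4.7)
The plan is to exhibit $\K$ and $\mathcal{K}^\pprime$ as co-representing the same functor on $\Cat_\dgwu(\k)$, so that the Yoneda lemma yields the desired isomorphism $p$.

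Define a functor $\Phi\colon \Cat_\dgwu(\k)\to\Sets$ by letting $\Phi(C)$ be the set of tuples $(x,y,\xi,H)$, where $x,y\in\Ob(C)$, $\xi\in C^0(x,y)$ is closed, and $H\in (C^\pretr)^{-1}(\Cone(\xi),\Cone(\xi))$ satisfies $dH=\id_{\Cone(\xi)}$. By Lemma \ref{kcontr}, $\K$ co-represents $\Phi$: the bijection sends a wu dg functor $F\colon \K\to C$ to $\bigl(F(0),F(1),F(f),H_F\bigr)$, where $H_F$ is assembled from $F(g), F(h_0), F(h_1), F(r)$ as in \eqref{eqidcone}.

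To show that $\mathcal{K}^\pprime$ also co-represents $\Phi$, I unwind its step-by-step construction. By the freeness of $\mathcal{A}_1$ on the graph $\Gamma$, a wu dg functor $\mathcal{A}_1\to C$ amounts to a choice of a closed degree-0 morphism $\xi\in C^0(x,y)$ (the image of $f$). Such a functor extends uniquely to a dg functor $\mathcal{A}_1^\pretr\to C^\pretr$ sending $\Cone(f)\mapsto \Cone(\xi)$, by the functoriality of the weakly unital pretriangulated hull (Proposition \ref{propph}). Since $\mathcal{A}_2$ is obtained from $\mathcal{A}_1^\pretr$ by freely adjoining a degree $-1$ morphism $h$ with $dh=\id_{\Cone(f)}$, extensions to wu dg functors $\mathcal{A}_2\to C^\pretr$ correspond bijectively to choices of $H\in (C^\pretr)^{-1}(\Cone(\xi),\Cone(\xi))$ satisfying $dH=\id_{\Cone(\xi)}$. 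Restricting such an extension to the full subcategory $\mathcal{K}^\pprime$ of $\mathcal{A}_2$ on $\{0,1\}$ yields a wu dg functor $\mathcal{K}^\pprime\to C^\pretr$ whose image lies in the full subcategory on $\{x,y\}$; via the fully faithful embedding $C\hookrightarrow C^\pretr$, this is a wu dg functor $\mathcal{K}^\pprime\to C$. Conversely, any wu dg functor $G\colon \mathcal{K}^\pprime\to C$ extends uniquely to a wu dg functor $\mathcal{A}_2\to C^\pretr$ by setting $\Cone(f)\mapsto \Cone(G(f))$ and sending $h$ to the endomorphism of $\Cone(G(f))$ whose components are prescribed by $G$ on the generating morphisms of $\mathcal{K}^\pprime$ between $0$ and $1$. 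This establishes the natural bijection $\Hom_{\Cat_\dgwu(\k)}(\mathcal{K}^\pprime,C)\cong\Phi(C)$.

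Combining the two representability results, the Yoneda lemma produces a unique wu dg functor $p\colon \K\to\mathcal{K}^\pprime$ intertwining the two bijections, and an inverse $q\colon \mathcal{K}^\pprime\to\K$ obtained from the canonical pair $(f,H_\K)\in\Phi(\K)$ built from the generators of $\K$; uniqueness in the Yoneda correspondence forces $q\circ p=\id_\K$ and $p\circ q=\id_{\mathcal{K}^\pprime}$. The map $p$ sends $f\mapsto f$ and $g,h_0,h_1,r$ to the respective components of $h$ in $\mathcal{A}_2$. The main subtlety will be rigorously justifying that the correspondence between wu dg functors $\mathcal{K}^\pprime\to C$ and $\mathcal{A}_2\to C^\pretr$ is indeed a bijection: this requires verifying that the restriction-extension procedure respects the weakly unital $A_\infty$-structure $P$ on $C^\pretr$ from Proposition \ref{propph}, in particular the formula \eqref{eqidcone} for $\id_{\Cone(\xi)}$, so that the relation $dh=\id_{\Cone(f)}$ unfolds componentwise into precisely the relations \eqref{eqrelk} defining $\K$.
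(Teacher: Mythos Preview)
Your approach is essentially the same as the paper's: both proofs show that $\K$ and $\mathcal{K}^\pprime$ co-represent the same functor $\Phi(C)=\{(\xi,H):\xi\in C^0(x,y)\text{ closed},\ H\in (C^\pretr)^{-1}(\Cone(\xi),\Cone(\xi)),\ dH=\id_{\Cone(\xi)}\}$ and then invoke Yoneda. The paper is terser, simply asserting that $\mathcal{K}^\pprime$ is a free wu dg category and that unwinding $dH=\id_{\Cone(\xi)}$ via \eqref{eqidcone} yields exactly the relations \eqref{eqrelk}; your version routes the same argument through the restriction/extension between $\mathcal{K}^\pprime\to C$ and $\mathcal{A}_2\to C^\pretr$, and you correctly flag as the main subtlety precisely the point the paper leaves implicit---namely that the hom-complexes of the full subcategory $\mathcal{K}^\pprime\subset\mathcal{A}_2$ are freely generated (as a wu dg category) by $f$ together with the four matrix components $h_{11},h_{12},h_{21},h_{22}$ of $h$, subject only to $df=0$ and the componentwise equations of $dh=\id_{\Cone(f)}$.
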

\endcomment

Recall that $I_2$ is the $\k$-linear envelope of the ordinary category with two objects, 0 and 1, and having exactly 1 morphism between any ordered pair of objects. Recall that Lemma \ref{lemmadrk} says that the projection $p_\mathcal{K}\colon \mathcal{K}\to I_2$ is a weak equivalence. 
\begin{coroll}\label{corki2}
The natural projection $\K\to I_2$, equal to the composition $\K\xrightarrow{p}\mathcal{K}\xrightarrow{p_\mathcal{K}} I_2$, is a weak equivalence. 
\end{coroll}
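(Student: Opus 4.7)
The plan is to invoke the 2-out-of-3 property (in fact, mere closure under composition) of weak equivalences in $\Cat_\dgwu(\k)$. By Proposition \ref{firstbigthm}, the first factor $p\colon \K \to \mathcal{K}$ is already known to be a weak equivalence in $\Cat_\dgwu(\k)$. For the second factor $p_\mathcal{K}\colon \mathcal{K} \to I_2$, Lemma \ref{lemmadrk} asserts it is a quasi-equivalence of (strictly unital) dg categories; it remains only to check that under the embedding $i\colon \Cat_\dg(\k)\hookrightarrow \Cat_\dgwu(\k)$ of Example \ref{exwusu}, a quasi-equivalence is sent to a weak equivalence.

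First I would observe that, for a strictly unital dg category $C$, the hom complexes of $i(C)$ coincide with those of $C$, and the category $H^0(i(C))$ coincides with $H^0(C)$ (this is evident from the construction of $i$, which leaves the underlying non-unital dg structure untouched and just sets all higher $p_n$ to zero). Consequently, conditions (W1) and (W2) on $i(p_\mathcal{K})$ reduce precisely to the quasi-equivalence conditions on $p_\mathcal{K}$, and $p_\mathcal{K}$ is a weak equivalence in $\Cat_\dgwu(\k)$.

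Finally, the class of weak equivalences in $\Cat_\dgwu(\k)$ is closed under composition: condition (W1) is preserved because the composition of quasi-isomorphisms of complexes is a quasi-isomorphism, and condition (W2) is preserved because the composition of equivalences of $\k$-linear categories is an equivalence. Hence $p_\mathcal{K}\circ p\colon \K \to I_2$ is a weak equivalence. There is no real obstacle here; the only point worth stating carefully is the compatibility of the two notions of weak equivalence under the embedding $i$, which is essentially tautological.
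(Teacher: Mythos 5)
Your proposal is correct and follows essentially the same route as the paper: the paper's proof likewise cites Proposition \ref{firstbigthm} for $p$ and Lemma \ref{lemmadrk} for $p_\mathcal{K}$, and concludes by composing weak equivalences. The only extra content in your write-up is the (tautological but worth-stating) check that the embedding $i$ of Example \ref{exwusu} turns a quasi-equivalence into a weak equivalence, which the paper leaves implicit.
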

\begin{proof}
	The dg functor $p$ is a weak equivalence by Proposition
	\ref{firstbigthm}. The dg functor $p_\mathcal{K}\colon \mathcal{K}\to I_2$, is a weak equivalence by Lemma \ref{lemmadrk}. 
\end{proof}

\subsection{\sc The sets $I$ and $J$} 
Denote by  $D(n)$ the complex $0\to\Bbbk [n] \xrightarrow{\id} \Bbbk [n-1]\to 0$, it is  $D(n)=Cone(id: \Bbbk [n] \rightarrow \Bbbk [n])$. \\
Denote $S(n-1)= \Bbbk [n-1]$.
Consider the natural imbedding $i: S(n-1) \rightarrow D(n)$  of complexes. \\
Denote by $A$ the weakly unital dg category with a single object $0$ and generated (over the dg operad $\mathcal{O}^\prime$) by  $id_0$. Denote by $\kappa$ the weakly unital dg functor \[\kappa: A\rightarrow \K,\] sending $0$ to $0$. It follows from Corollary \ref{corki2} that $\kappa$ is a weak equivalence. \\
Denote by $\mathcal{B}$ the weakly unital dg category with two objects $0$ and $1$ and generated over $\mathcal{O}^\prime$ by morphisms $id_0$ and $id_1$. \\
Let ${P}(n)$ be the dg graph with two objects $0$ and $1$, and with morphisms ${P}(n)(0,1) = D(n), {P}(n)(0,0) = 0, {P}(n)(1,1) = 0, {P}(n)(1,0) = 0$. Denote by $\mathcal{P}(n)$ the weakly unital dg category generated by $P(n)$: $\mathcal{P}(n)= FU(P(n))$. \\
Denote by $\alpha(n)$ the weakly unital dg functor \[\alpha(n): \mathcal{B} \rightarrow \mathcal{P}(n),\] sending $0$ to $0$ and $1$ to $1$. \\
Let $C(n)$ be the dg graph with two objects $0$ and $1$, and with morphisms $C(n)(0,1) = S(n-1), C(n)(0,0) = 0, C(n)(1,1) = 0, C(n)(1,0) = 0$. Denote by $\mathcal{C}(n)$ the weakly unital dg category generated by $C(n)$: $\mathcal{C}(n) := FU(C(n))$. \\
Let $b(n): C(n) \rightarrow P(n)$ be a map of dg graphs sending $0$ to $0$, $1$ to $1$, and such that $S(n-1) = C(n)(0,1) \xrightarrow{i} P(n)(0,1) = D(n)$ is the imbedding $i$. Denote by $\beta(n)$ the weakly unital dg functor \[\beta(n) := FU(b(n)) : \mathcal{C}(n) \xrightarrow{i} \mathcal{P}(n).\]
Denote by $Q$ the natural weakly unital dg functor \[Q: \emptyset \rightarrow \mathcal{A}.\]

Let $I$ be a set of morphisms in $\Cat_{\dgwu}(\Bbbk)$ which comprises the weakly unital dg functors $Q$ and $\beta(n), n \in \mathbb{Z}$.

Let $J$ be a set of morphisms in $\Cat_{\dgwu}(\Bbbk)$ which comprises $\kappa$ and $\alpha(n), n \in \mathbb{Z}$. 

The set $I$ and $J$ are referred to as the sets of \emph{generating cofibrations} and of \emph{generating acyclic cofibrations}, correspondingly. \\

\begin{lemma}\label{lemma.simple}
	A weakly unital dg functor $\phi: \C \rightarrow \D$ has $RLP$ with respect to all $\alpha(n), n \in \mathbb{Z}$ if and only if $\phi$ obeys $(F1)$. A weakly unital dg functor $\phi: \C \rightarrow \D$ has $RLP$ with respect to all $\beta(n), n \in \mathbb{Z}$ if and only if $\phi$ obeys $(F1) \cap (W1)$.
\end{lemma}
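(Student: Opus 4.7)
The plan is to translate both lifting problems through the adjunction $F \colon \Graphs_\dgu(\k) \rightleftarrows \Cat_\dgwu(\k) \colon U$ recalled in Section~2. Since each of $\mathcal{B}$, $\mathcal{P}(n)$, and $\mathcal{C}(n)$ is built as $F$ applied to an explicit, very small unital dg graph, a weakly unital dg functor out of one of them is exactly a map of unital dg graphs from that generating graph into $U(\C)$ or $U(\D)$. This reduces both halves of the lemma to questions about the underlying map of complexes $\phi_{x,y}\colon \C(x,y)\to\D(\phi x,\phi y)$ for fixed pairs of objects, eliminating all category-theoretic overhead.

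For the first assertion, I would unpack that a map $\mathcal{B}\to\C$ is merely a choice of two objects $x,y\in\C$, while a map $\mathcal{P}(n)\to\D$ amounts to a choice of two objects together with an element $\tilde a\in\D^{n}(\phi x,\phi y)$ (the image of the degree-$n$ generator of $D(n)$; its differential in $\D$ is then automatic). A lift against $\alpha(n)$ is thus exactly an element $a\in\C^{n}(x,y)$ with $\phi(a)=\tilde a$. Ranging over all $n$, $x$, $y$, and $\tilde a$, RLP with respect to every $\alpha(n)$ becomes the component-wise surjectivity condition $(F1)$, and conversely.

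For the second assertion, a map $\mathcal{C}(n)\to\C$ is a closed degree $(n-1)$ morphism $c\in\C^{n-1}(x,y)$, and commutativity of the lifting square with a map $\mathcal{P}(n)\to\D$ given by $\tilde a\in\D^{n}(\phi x,\phi y)$ expresses precisely that $\phi(c)=d\tilde a$. A lift against $\beta(n)$ is then an $a\in\C^{n}(x,y)$ with $da=c$ and $\phi(a)=\tilde a$. This is the classical ``acyclic fibration of complexes'' lifting property applied to $\phi_{x,y}$, and I would verify the equivalence with $(F1)\cap(W1)$ in the standard way: specializing $\tilde a=0$ gives that $\ker\phi_{x,y}$ is acyclic; specializing $c=0$ gives surjectivity on cycles; applying the latter to $d\tilde a$ for arbitrary $\tilde a$, together with acyclicity of the kernel, upgrades the lift to produce a surjection in every degree, yielding $(F1)$, and the long exact sequence of $0\to\ker\phi_{x,y}\to\C(x,y)\to\D(\phi x,\phi y)\to 0$ then gives $(W1)$. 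Conversely, given $(F1)\cap(W1)$, lift $\tilde a$ to some $a'\in\C^{n}(x,y)$, observe that $da'-c$ lies in the acyclic kernel and is closed, write it as $db$ with $b\in\ker\phi_{x,y}$, and set $a:=a'-b$.

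No genuine obstacle is expected: the statement is essentially the observation that the generating sets $\{\alpha(n)\}$ and $\{\beta(n)\}$ were designed to detect $(F1)$ and $(F1)\cap(W1)$, respectively. The only point requiring some care is checking that the adjunction really does reduce maps out of $\mathcal{B}$, $\mathcal{P}(n)$, $\mathcal{C}(n)$ to the naive descriptions above --- in particular that no additional constraints are imposed by the weakly unital structure, which is immediate because in each case the generating graph has its identity morphisms freely adjoined and no relations among the operations of $\mathcal{O}^\prime$ other than those inherent to $F$.
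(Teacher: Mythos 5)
Your proposal is correct and is essentially the standard argument: the paper itself only cites [PS, Sec.\ 2.1.3] for this lemma, and the intended proof there is exactly your reduction via the adjunction $F\dashv U$ to the classical lifting characterization of degreewise-surjective maps (for $\alpha(n)$) and of surjective quasi-isomorphisms (for $\beta(n)$) of complexes over a field. All the steps you outline --- identifying maps out of $\mathcal{B}$, $\mathcal{P}(n)$, $\mathcal{C}(n)$ with choices of objects and elements of $\Hom$-complexes, and then the two specializations $\tilde a=0$ and $c=0$ plus the correction term $a=a'-b$ --- go through without issue.
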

\begin{proof}
	The proof is straightforward and can be found in [PS, Sec.2.1.3].
\end{proof}

Recall the standard terminology (conventional for the theory of closed model categories): a dg functor $\phi\colon \C\to \D$ belongs to $I\text{-}inj$ (resp., to $J\text{-}inj$) if it has the RLP with respect to all morphisms in $I$ (resp., in $J$). 

\begin{prop}\label{propt5}
	One has 
	\[ I\text{-}inj = Surj \cap (W1) = J\text{-}inj \cap W \]
\end{prop}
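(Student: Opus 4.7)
The plan is to unpack both equalities using Lemma \ref{lemma.simple} together with a direct analysis of $Q$, and then to reduce the substantive content to an obstruction-theoretic lifting of the generators of $\K$.

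For the first equality, a wu dg functor $\phi$ has RLP with respect to $Q\colon \emptyset \to \mathcal{A}$ if and only if it is surjective on objects, because $\mathcal{A}$ is the free wu dg category on one object, so a lifting datum is simply a chosen object in $\D$. Lemma \ref{lemma.simple} (second statement) identifies RLP with respect to all $\beta(n)$ with $(F1) \cap (W1)$. Therefore $I\text{-}inj$ consists of wu dg functors that are surjective on objects and satisfy $(F1) \cap (W1)$, that is, $I\text{-}inj = Surj \cap (W1)$.

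For the second equality, Lemma \ref{lemma.simple} (first statement) gives that $J\text{-}inj$ is the intersection of (RLP w.r.t. $\kappa$) and $(F1)$. First I establish $J\text{-}inj \cap W \subseteq Surj \cap (W1)$. Given such a $\phi$, the conditions $(F1)$ and $(W1)$ are immediate; to verify surjectivity on objects, pick $z \in \D$, and by $(W2)$ find $x \in \C$ and a homotopy equivalence $\xi\colon \phi(x) \to z$ in $\D$. By Lemma \ref{lem.kon}, $\xi$ extends to a wu dg functor $F\colon \K \to \D$ with $F(0) = \phi(x)$ and $F(1) = z$. Together with the map $\mathcal{A} \to \C$ picking $x$, this yields a commuting square along $\kappa$ and $\phi$; any lift $L\colon \K \to \C$ then provides $y := L(1)$ with $\phi(y) = z$.

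For the reverse inclusion $Surj \cap (W1) \subseteq J\text{-}inj \cap W$, the axiom $(F1)$ yields RLP with respect to the $\alpha(n)$, while $(W1)$ together with surjectivity on objects makes $H^0(\phi)$ essentially surjective and fully faithful, so $(W2)$ holds and $\phi \in W$. The core task is RLP with respect to $\kappa$. Given a square encoding an object $x \in \C$ and a wu dg functor $F\colon \K \to \D$ which, by Lemma \ref{lem.kon}, packages a homotopy equivalence $\xi\colon \phi(x) \to z$ with witnesses $(\eta, h_0, h_1, r)$, first pick $y \in \C$ with $\phi(y) = z$ by surjectivity on objects. Then lift $\xi, \eta, h_0, h_1, r$ successively: at each stage $(F1)$ produces a preliminary lift, the defect (the difference between the differential of this lift and the required right-hand side assembled from previously lifted data) lies in $\ker \phi$ and is closed by the defining relations; since $(F1) \cap (W1)$ forces each component $\C(a,b) \to \D(\phi a, \phi b)$ to be a surjective quasi-isomorphism, the kernel complex is acyclic, so the defect is a boundary and subtracting a correcting element gives the desired closed lift. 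The main obstacle is the lift of the witness $r$, whose defining relation involves the weakly unital operations $p_2$; however, the candidate right-hand side $\tilde h_1 \tilde\xi - \tilde\xi \tilde h_0 + p_2(1_{\tilde y}, \tilde\xi) - p_2(\tilde\xi, 1_{\tilde x})$ is closed by the very $A_\infty$ identity that makes $dr$ well-defined in $\K$, so the same kernel-boundary correction applies verbatim, completing the lift.
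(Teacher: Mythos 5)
Your proof is correct, but the substantive inclusion $Surj\cap(W1)\subseteq J\text{-}inj$ is handled by a genuinely different (and more elementary) argument than the paper's. The paper proves the stronger statement $Fib=J\text{-}inj$ (Lemma \ref{Fibinj}): there the lifting problem against $\kappa$ must be solved for an arbitrary fibration, for which the hom-complex kernels need not be acyclic, and this forces the detour through axiom $(F2)$, the weakly unital pretriangulated hull, the identification of wu dg functors $\K\to C$ with contractions of cones (Lemma \ref{kcontr}), and the comparison of two contracting homotopies (Lemma \ref{calculation}). Proposition \ref{propt5} is then deduced as $J\text{-}inj\cap W=Fib\cap W=Surj\cap(W1)$. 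You instead observe that for maps in $Surj\cap(W1)$ each $\C(a,b)\to\D(\phi a,\phi b)$ is a \emph{surjective quasi-isomorphism}, so its kernel is acyclic and the generators $f,g,h_0,h_1,r$ of the semi-free category $\K$ can be lifted one at a time by killing closed defects in the kernel; this never touches $(F2)$ or the pretriangulated machinery, and your direct verification of $J\text{-}inj\cap W\subseteq Surj\cap(W1)$ via $(W2)$ and Lemma \ref{lem.kon} likewise bypasses the citation of [PS, Lemma 2.1]. Both routes suffice for condition (5) of the recognition theorem, so your argument does establish the model structure; what it does \emph{not} buy is the identification of the resulting fibrations ($J\text{-}inj$ by construction) with the explicitly described class $Fib$, which is the real content of Lemma \ref{Fibinj} and the reason the paper develops $C^\pretr$ and Lemma \ref{calculation}. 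One small point to make explicit: the closedness of the candidate right-hand side for $d\tilde r$ is the same formal computation that guarantees $d^2r=0$ in $\K$ itself, and it only uses the already-lifted exact relations for $\tilde h_0,\tilde h_1$ together with \eqref{eqp2}, so your "verbatim" claim is justified.
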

\begin{proof}
	Thanks to Lemma \ref{lemma.simple}, the first equality follows from the fact that a dg functor $\phi\colon \C\to\D$ has the RLP for $Q$ if and only if it is surjective on objects, which is straightforward. \\
	The second equality is far more sophisticated, and its proof is based on the following lemma.\footnote{Lemma \ref{Fibinj} is one of the most subtle places in our constructions; in particular, the theory of weakly unital pre-triangulated hull developed in Section \ref{sectionptrh}, and Lemma \ref{calculation}, were designed especially for its proof.}
\begin{lemma}\label{Fibinj}
One has $Fib = J\text{-}inj$.
\end{lemma}
\begin{proof}
The inclusion $J\text{-}inj \subseteq Fib$ follows from Lemma \ref{lem.kon}. \\
In order to prove the inclusion $Fib \subseteq J\text{-}inj$  consider $\phi: \C \rightarrow \D$ in $Fib$. Axiom $(F1)$ is equivalent to the $RLP$ with respect to $\alpha(n), n \in \mathbb{Z}$, thence we only need to prove the $RLP$ with respect to $\kappa$ for $\phi$.
We are given a weakly unital dg functor $F: \K \rightarrow \D$. We can apply $(F2)$ to $\xi = F(f) \in \D^0(\phi(x),z)$, and so we get a morphism $\eta \in \C^0(x,y)$ which is a homotopy equivalence and $\phi(\eta) = \xi, \phi(y) = z$. (Recall that $f,g,h_0,h_1,r$ are generators for $\K$, see \eqref{eqrelk}).\
We should construct a weakly unital dg functor $\hat{F}: \K \rightarrow \C$ such that $\phi \circ \hat{F} = F$ and $\hat{F}(f) = \eta$. 
By Lemma \ref{kcontr} having a weakly unital dg functor $F: \K \rightarrow \D, F(f) = \xi$ is equivalent to having a contraction of $Cone(\xi)$ in $\D^{pretr}$, i.e. we have $h \in \D^{pretr}(Cone(\xi),Cone(\xi))$ such that $dh = id_{Cone(\xi)}$.
By $(F2)$ we know that $Cone(\eta)$ is also contractible, so we also have a morphism $\tilde{h}_1 \in \C^{pretr}(Cone(\eta),Cone(\eta))$, such that $d\tilde{h}_1 = id_{Cone(\eta)}$. Even though we don't know whether $\phi^{pretr}(\tilde{h}_1) = h$, we still have $d\phi^{pretr}(\tilde{h}_1) = id_{Cone(\xi)}$. \\
By Lemma \ref{calculation}, one has $\phi^{pretr}(\tilde{h}_1) - h = dt$. By $(F1)$, we find a lift $\tilde{t}$ of $t$, and set $\tilde{h} := \tilde{h}_1 - d\tilde{t}$. One clearly has $d\tilde{h} = id_{Cone(\eta)}$ and $\phi(\tilde{h}) = h$. This gives the desired lift $\hat{F}: \K \rightarrow \C$ such that $\phi \circ \hat{F} = F$, by Lemma \ref{kcontr}. 
\end{proof}
Now we have $J\text{-}inj \cap W = Fib \cap W = Surj \cap (W1)$, and we're done.
\end{proof}

The following theorem is one of our main results:
\begin{theorem}\label{bigtheorem}
	The category $\Cat_{\dgwu}(\Bbbk)$ admits a cofibrantly generated closed model structure whose weak equivalences and fibrations are as above, and whose sets of generating cofibrations and generating acyclic cofibrations are $I$ and $J$. 
\end{theorem}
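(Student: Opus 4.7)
My plan is to deduce Theorem \ref{bigtheorem} from the standard recognition theorem for cofibrantly generated model categories. Small completeness and cocompleteness of $\Cat_\dgwu(\k)$ are recorded in Section 2, so it will suffice to verify four things: $W$ has the 2-of-3 property and is closed under retracts; the domains of $I$ and $J$ are small relative to the corresponding cell complexes; $I\text{-inj} = W \cap J\text{-inj}$; and $J\text{-cell} \subseteq W \cap I\text{-cof}$. The first item is immediate from the analogous properties of quasi-isomorphisms of complexes and of $\k$-linear equivalences. The second holds because each of $\emptyset, \mathcal{A}, \mathcal{B}, \mathcal{C}(n)$ is free over the monad of Section 2 on a finite-rank unital dg graph, hence small with respect to any transfinite composition. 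The third is precisely Proposition \ref{propt5}, which rests on Lemma \ref{Fibinj}, Lemma \ref{calculation}, and the weakly unital pretriangulated hull of Section \ref{s.def}.

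For the inclusion $J\text{-cell} \subseteq I\text{-cof}$ it suffices to show $J \subseteq I\text{-cof}$. The functor $\alpha(n) \colon \mathcal{B} \to \mathcal{P}(n)$ is realized as a two-step $I$-cell: first adjoin a cycle of degree $n-1$ via $\beta(n-1)$, then kill its boundary via $\beta(n)$. The functor $\kappa \colon \mathcal{A} \to \K$ is presented as a finite sequence of pushouts freely adjoining the generators $f, g, h_0, h_1, r$ of \eqref{eqrelk} along appropriate $\beta(n)$; the monadic description of Section 2 makes it unambiguous to freely adjoin, over $\mathcal{O}'$, a morphism with a prescribed boundary in terms of earlier generators.

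The genuine obstacle is $J\text{-cell} \subseteq W$. Because $W$ is stable under transfinite compositions (filtered colimits of quasi-isomorphisms remain quasi-isomorphisms and $H^0$ commutes with such colimits), I reduce to showing that a pushout of a single $\alpha(n)$ or $\kappa$ along an arbitrary wu dg functor is a weak equivalence. For $\alpha(n)$ the hom-complexes of the pushout are obtained from those of the base by freely adjoining, via $\mathcal{O}'$, an acyclic pair $(u, du)$; filtering by the number of newly adjoined letters and passing to a converging spectral sequence, in the style of [PS, Sec.~2], identifies the associated graded with a K\"unneth product of the base with an acyclic complex, yielding both (W1) and (W2). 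The delicate case is $\kappa$. Here I invoke Lemma \ref{kcontr} to re-describe pushing out along $\kappa$ as freely adjoining a morphism $\xi$ together with a contraction of $\Cone(\xi)$ in the weakly unital pretriangulated hull. The ``fattened'' differential $dr = h_1 \circ f - f \circ h_0 + p_2(1,f) - p_2(f,1)$ in \eqref{eqrelk} is exactly the correction required for $\mathrm{id}_{\Cone(\xi)}$ to be a cycle in the wu setting, which is what allows a filtration argument parallel to the $\alpha(n)$ case to run; combined with the weak equivalence $\K \to I_2$ from Corollary \ref{corki2}, this identifies the hom-complexes of the pushout with those of the base up to quasi-isomorphism and induces an equivalence on $H^0$. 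With all four conditions verified, the recognition theorem delivers the cofibrantly generated model structure.
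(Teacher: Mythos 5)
Your proposal is correct and follows the same overall strategy as the paper: Hovey's recognition theorem, with the hard condition $I\text{-}inj = W\cap J\text{-}inj$ supplied by Proposition \ref{propt5} (hence by Lemma \ref{Fibinj}, Lemma \ref{calculation} and the pretriangulated hull of Section \ref{s.def}), and with $J\text{-}cell\subseteq W$ reduced to pushouts of $\alpha(n)$ and $\kappa$, settled by the acyclicity of $D(n)$ and of $\overline{\K}=\K(0,0)/\k[0]$ (the latter via Corollary \ref{corki2}) together with the K\"unneth formula. You diverge in two sub-steps. First, for $J\text{-}cell\subseteq I\text{-}cof$ you exhibit $\alpha(n)$ and $\kappa$ directly as relative $I$-cell complexes; the paper gets this for free by observing that condition (5) yields $I\text{-}inj\subseteq J\text{-}inj$, hence $J\text{-}cof\subseteq I\text{-}cof$, a one-line formal argument. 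Your hands-on presentation is also fine, though note that realizing $\kappa$ as an $I$-cell requires a preliminary pushout along $Q$ to create the second object of $\K$ before attaching $f,g,h_0,h_1,r$ via the $\beta(n)$'s. Second, for the pushout along $\kappa$ the paper does not route through Lemma \ref{kcontr} and the pretriangulated hull; it applies the explicit pushout formula of Proposition \ref{propappc} (Appendix C), which exhibits $\Y(a,b)$ as a direct sum of complexes whose summands beyond $\X(a,b)$ each contain a tensor factor $\overline{\K}$, so no filtration or spectral sequence is needed at that stage. Your re-description via Lemma \ref{kcontr} is viable (it is close to an alternative proof of Proposition \ref{firstbigthm} sketched in an earlier draft), but in either approach the essential input is the quasi-equivalence $\K\to I_2$, which you correctly identify.
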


\subsection{\sc Proof of Theorem \ref{bigtheorem}}
We refer the reader to standard textbooks on closed model categories for the notations $I\text{-}cell$, $J\text{-}cell, I\text{-}cof, J\text{-}cof$, see e.g. [Ho, 2.1.3], [Hi, 11.1]. Recall that $I\text{-}cell\subset I\text{-}cof$ and $J\text{-}cell\subset J\text{-}cof$. 

Recall [Ho, Th. 2.1.19] which the proof is based on:
\begin{theorem}
	Let $C$ be a small complete and cocomplete category. Suppose that $\mathcal{W}$ is a subcategory of $C$, and $I$ and $J$ are sets of maps. Assume that the following conditions hold:
	\begin{itemize}
		\item[1.] the subcategory $W$ has 2-out of-3 property and is closed under retracts,
		\item[2.] the domains of $I$ are small relative to $I$-cell,
		\item[3.] the domains of $J$ are small relative to $J$-cell,
		\item[4.] $J\text{-}cell \subset W \cap I\text{-}cof$,
		\item[5.] $I\text{-}inj = W \cap J\text{-}inj$.
	\end{itemize}
    Then there is a cofibrantly generated closed model structure on $C$, for which the morphisms $W$ of $\mathcal{W}$ are weak equivalences, $I$ are generating cofibrations, $J$ are acyclic generating cofibrations. Its fibrations are defined as $J\text{-}inj.$
\end{theorem}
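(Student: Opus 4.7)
The plan is to verify the five hypotheses of Hovey's recognition theorem (Theorem 2.1.19 of [Ho]) quoted directly after the statement. Hypothesis 5, namely $I\text{-}inj=W\cap J\text{-}inj$, is exactly Proposition \ref{propt5}, so that part is already in hand. Small completeness and cocompleteness of $\Cat_\dgwu(\k)$ were recalled in Section 2 via the monadic presentation over $\Graphs_\dgu(\k)$ coming from the dg operad $\mathcal{O}^\prime$. It remains to check hypotheses 1--4.

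For hypothesis 1 I would argue that $W$, being defined by the conjunction of quasi-isomorphism on all $\Hom$-complexes and essential surjectivity on $H^0$, inherits the 2-out-of-3 and retract-closure properties from the corresponding well-known properties in $\mathrm{Ch}(\k)$ and in ordinary $\k$-linear categories; the only mild check is that closure under retracts on the $H^0$-side uses that retracts of equivalences of $\k$-linear categories are equivalences, which is standard. For hypotheses 2 and 3 (smallness of the domains of $I$ and $J$ relative to the respective cell complexes), I would invoke the monadic presentation: the forgetful functor to $\Graphs_\dgu(\k)$ commutes with filtered colimits (the monad coming from $\mathcal{O}^\prime$ is finitary in each arity), so the domains of $I$ and $J$ --- which are either $\emptyset$, the free wu dg category on a one- or two-vertex unital dg graph with a single generator, or the wu dg category $\mathcal{A}$ on one idempotent --- are $\omega$-small relative to any cell complex, exactly as in the analogous argument of [PS, Sect. 2] and [Tab1].

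The real work is hypothesis 4: $J\text{-}cell\subset W\cap I\text{-}cof$. The inclusion $J\text{-}cell\subset I\text{-}cof$ will follow by exhibiting each element of $J$ as an $I$-cofibration. For the generators $\alpha(n)\colon \mathcal{B}\to \mathcal{P}(n)$ this is routine: $\alpha(n)$ is obtained by attaching along $\beta(n)\in I$. For $\kappa\colon \mathcal{A}\to\mathcal{K}^\prime$ one has to build $\mathcal{K}^\prime$ from $\mathcal{A}$ by a finite sequence of cell attachments of generators $\beta(n)\in I$, adding in turn the generators $f,g,h_0,h_1,r$ together with the relations \eqref{eqrelk}; since $I\text{-}cof$ is closed under transfinite compositions and pushouts, this gives $\kappa\in I\text{-}cof$. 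Then $I\text{-}cof$ is closed under pushout and transfinite composition, hence $J\text{-}cell\subset I\text{-}cof$.

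The main obstacle is the other half, $J\text{-}cell\subset W$, i.e.\ that pushouts of $\kappa$ and of the $\alpha(n)$ along arbitrary morphisms, as well as transfinite compositions thereof, remain in $W$. The easier generators $\alpha(n)$: pushing out $\mathcal{B}\to \mathcal{P}(n)$ along $\mathcal{B}\to C$ tensors an acyclic complex $D(n)$ freely into $C(x,y)$, and one checks directly that the enlarged hom-complexes remain quasi-isomorphic to the original ones and that $H^0$ is unchanged on the nose. The hard case is the Kontsevich generator $\kappa$: here I would use Proposition \ref{firstbigthm} together with Lemma \ref{lem.kon} and Lemma \ref{kcontr} to recognize a pushout of $\kappa$ as formally adjoining a contraction of a cone, i.e.\ formally inverting a closed arrow $\xi$ up to homotopy. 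The crucial observation is that the weakly unital pre-triangulated hull of Section \ref{s.def} is functorial and that the newly adjoined contraction gives, at the level of $\Hom$-complexes, the same quasi-isomorphism class as the analogous classical construction in $\Cat_\dg(\k)$; this reduces the preservation of quasi-isomorphism under the pushout to the operadic quasi-isomorphism $\mathcal{O}^\prime\to \Assoc_+$ of Theorem \ref{compop}, exactly as in the $E_1$-sheet argument of Lemma \ref{lemmae1}. Closure under transfinite composition is then a standard colimit/filtered-colimit-of-quasi-isomorphisms argument. Once hypothesis 4 is secured, Hovey's theorem yields the desired cofibrantly generated closed model structure, completing the proof. \qed
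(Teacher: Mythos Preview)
You have confused two different statements. The theorem you were asked to prove is Hovey's general recognition theorem, stated for an \emph{arbitrary} complete and cocomplete category $C$ with abstract classes $W$, $I$, $J$. The paper does not prove this theorem at all: it is quoted verbatim from [Ho, Th.\ 2.1.19] and used as a black box. Your writeup is instead a proof sketch of Theorem~\ref{bigtheorem}, the \emph{application} of Hovey's theorem to the particular category $\Cat_\dgwu(\k)$. So as a proof of the stated theorem your proposal is simply off target.

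If one reads your text as a proposed proof of Theorem~\ref{bigtheorem}, it is broadly in the right spirit but diverges from the paper in two places. First, for $J\text{-}cell\subset I\text{-}cof$ you attach generators one by one to exhibit each element of $J$ as an $I$-cofibration. The paper does this in one line: condition~(5) gives $I\text{-}inj\subset J\text{-}inj$, hence $J\text{-}cof\subset I\text{-}cof$, and in particular $J\text{-}cell\subset I\text{-}cof$. Your route works but is unnecessary. Second, and more seriously, for the $\kappa$ case of $J\text{-}cell\subset W$ you propose to interpret a pushout of $\kappa$ as ``adjoining a contraction of a cone'' via Lemmas~\ref{lem.kon} and~\ref{kcontr}, and then invoke functoriality of the weakly unital pretriangulated hull together with the operadic quasi-isomorphism $\mathcal{O}^\prime\to\Assoc_+$. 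This is vague and does not obviously control the $\Hom$-complexes of the pushout; Lemma~\ref{kcontr} classifies maps \emph{out of} $\K$, not the hom-complexes of a colimit. The paper instead computes the pushout explicitly using Proposition~\ref{propappc}: the new $\Hom$-complexes decompose as a direct sum whose extra summands contain the acyclic factor $\bar{\K}=\K(0,0)/\k[0]$ (acyclic by Proposition~\ref{firstbigthm} and Corollary~\ref{corki2}), and one concludes by K\"unneth. Your sketch does not supply a substitute for this explicit computation.
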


\begin{proof}[Proof of Theorem \ref{bigtheorem}]
	The category $\Cat_\dgwu(\k)$ is small complete and small cocomplete by [PS, Theorem 1.19]. 
	The conditions $(1)-(3)$ are clear. Condition $(5)$ was proved in Proposition \ref{propt5}. It follows from $(5)$ that $I\text{-}inj \subset J\text{-}inj$, and so $J\text{-}cof \subset I\text{-}cof$.
	It only remains to prove that $J\text{-}cell \subseteq W$.
\begin{proof}[Proof of $J\text{-}cell \subseteq W$]\label{proofJcell}
	We have to prove that the push-out of a morphism in $J$ is a weak equivalence. We consider two cases: when the morphism in $J$ is $\alpha(n), n\in\mathbb{Z}$, and when it is $\kappa$.  \\
	\emph{First case.} \\
	The first case we need to consider is shown in the push-out diagram: 
	\begin{equation}
	\begin{tikzpicture}[baseline= (a).base]
\node[scale=1] (a) at (5,5){
	\begin{tikzcd}
 \mathcal{B} \arrow[d, "\alpha(n)"'] \arrow[r, "g"] & \X \arrow[d, "f"] \\
 \mathcal{P}(n) \arrow[r]           &  \Y
\end{tikzcd}
};
\end{tikzpicture}
	\end{equation}
	where $g$ is an arbitrary map. 
	We need to prove that $f$ is a weak equivalence. \\
	Clearly $Ob(\X) = Ob(\Y)$, and $f$ acts by the identity maps on the objects. 
	We are left to show that, for any objects $a,b \in Ob(\X)$, the map of complexes $f(a,b): \X(a,b) \rightarrow \Y(a,b)$ is a quasi-isomorphism. 
	For objects $0,1$ in $Ob(\mathcal{B})$, let denote by $u = g(0)$ and $v=g(1)$.
By Proposition \ref{propappc}, one has the following description for the hom-complexes of $\Y$:
	\begin{equation}
	\begin{aligned}
	\Y(a,b) := \;\X(a,b) &\bigoplus \mathcal{O}'(3) \otimes \X(v,b)\otimes D(n) \otimes \X(a,u) \\
	&\bigoplus \mathcal{O}'(5) \otimes \X(v,b)\otimes D(n) \otimes \X(v,u)\otimes D(n) \otimes \X(a,u) \bigoplus \cdot\cdot\cdot 
	\end{aligned}
	\end{equation}
	The map $f(a,b)$ sends $\X(a,b)$ to the first summand. The other summands have trivial cohomology by the K\"{u}nneth formula, since the acyclicity of $D(n)$.\\
	\vspace{1mm}
	\\
\emph{Second case.} \\
	As the second case we consider the following push-out diagram:
	\begin{equation}
	\begin{tikzpicture}[baseline= (a).base]
\node[scale=1] (a) at (5,5){
	\begin{tikzcd}
 A \arrow[d, "\kappa"'] \arrow[r, "h"] & \X \arrow[d, "f"] \\
 \K \arrow[r]           &  \Y
\end{tikzcd}
};
\end{tikzpicture}
	\end{equation}
	where $h$ is an arbitrary map.

	One has $Ob(\Y) = Ob(\X) \sqcup 1_{\K}$. It is clear that $H^{0}(f)$ is essentially surjective. One has to prove that the $f$ is locally a quasi-isomorphism: $f(a,b): \X(a,b) \rightarrow \Y(a,b), \forall a,b \neq 1_{\K}$. Denote $h(0_{A}) = u$. \\
	By Theorem \ref{firstbigthm}, we know that $\K$ is a resolution of the $\Bbbk-$linear envelope of the ordinary category with two objects $0,1$ and with only one morphism between any pair of  objects. In particular, $\K(0,0)$ is quasi-isomorphic to $\Bbbk[0]$.
	Therefore
	\begin{equation}
\bar{\K}:=\K(0,0)/\Bbbk[0]
	\end{equation}
	 is a complex acyclic in all degrees. \\
	By Proposition \ref{propappc}, we have: 
	\begin{equation}
	\begin{aligned}
	\Y(a,b) := \;\X(a,b) &\bigoplus\mathcal{O}(3) \otimes \X(u,b)\otimes \bar{\K} \otimes \X(a,u) \\
	&\bigoplus \mathcal{O}(5) \otimes \X(u,b)\otimes \bar{\K} \otimes \X(u,u)\otimes \bar{\K} \otimes \X(a,u) \bigoplus\dots
	\end{aligned}
	\end{equation}
It is a direct sum of complexes, among which all but the first one are acyclic, due to the acyclicity of $\bar{\K}$. It completes the proof that $f$ is a quasi-equivalence. 
\end{proof}
Theorem \ref{bigtheorem} is proven.
\end{proof}
\comment
\begin{theorem}[Model structure on $\Cat^0_{\dgwu}(\Bbbk)$]\label{cor.mod}
The category $\Cat^0_{dgwu}(\Bbbk)$ admits a cofibrantly generated closed model structure whose weak equivalences and fibrations are defined above, and whose sets of generating cofibrations and generating acyclic cofibrations are those $I$ and $J$, where we consider refined weakly unital dg categories and functors, instead of weakly unital.
\end{theorem}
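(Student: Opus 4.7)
The plan is to apply the recognition theorem for cofibrantly generated model structures (namely [Ho, Th.~2.1.19], which is conveniently recalled immediately below the theorem statement). Five conditions must be verified. Small completeness and cocompleteness of $\Cat_\dgwu(\k)$ was established in [PS, Theorem~1.19] via the monadic description recalled in Section~2. The 2-out-of-3 property and closure under retracts for $W$ are immediate from its definition, since it is formulated in terms of quasi-isomorphisms of hom-complexes together with essential surjectivity of $H^0$. Smallness of the domains of $I$ and $J$ relative to $I$-cell and $J$-cell follows from the fact that $\Cat_\dgwu(\k)$ is the category of algebras over a filtered-colimit-preserving monad on $\Graphs_\dgu(\k)$; the domains $\emptyset$, $\mathcal{B}$, $\mathcal{C}(n)$, and $A$ are all finitely presentable. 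The identity $I\text{-}inj = W \cap J\text{-}inj$ is precisely Proposition~\ref{propt5}. From this identity we also get $J\text{-}cof \subseteq I\text{-}cof$ for free.

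The only genuinely non-trivial hypothesis is therefore $J\text{-}cell \subseteq W$, i.e.\ that the pushout of any generating acyclic cofibration is a weak equivalence. There are two cases, one for each element of $J$, and both hinge on an explicit description of the hom-complexes of the pushout, supplied by the coequalizer computation of Appendix~C (Proposition~\ref{propappc}).

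For a pushout along $\alpha(n) \colon \mathcal{B} \to \mathcal{P}(n)$, write $g \colon \mathcal{B} \to \X$ for the attaching map and $\Y$ for the pushout. Since $\alpha(n)$ is the identity on objects, $\Y$ and $\X$ have the same object set, so (W2) is automatic. For (W1), Proposition~\ref{propappc} expresses $\Y(a,b)$ as a direct sum whose first summand is $\X(a,b)$ and whose higher summands each contain at least one tensor factor $D(n)$. Since $D(n)$ is acyclic, the Künneth formula makes these higher summands acyclic, so the canonical map $\X(a,b) \to \Y(a,b)$ is a quasi-isomorphism.

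For a pushout along $\kappa \colon A \to \K$, the pushout $\Y$ adds one new object, the image of $1 \in \K$. Condition (W2) follows because in $\K$ the generator $f \in \K^0(0,1)$ is a homotopy equivalence, hence so is its image in $\Y$, so the new object is isomorphic in $H^0(\Y)$ to the image of $h(0_A)$. For (W1), Proposition~\ref{firstbigthm} together with Corollary~\ref{corki2} shows that $\K$ is weakly equivalent to $I_2$, so in particular the quotient $\bar{\K} := \K(0,0)/\k[0]$ is acyclic. Using the Appendix~C description again, $\Y(a,b)$ splits as a direct sum whose higher summands all contain a tensor factor $\bar{\K}$, so Künneth once more yields the local quasi-isomorphism. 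The main obstacle is precisely this second case: without the simplifying assumption $\id_x \circ \id_x = \id_x$ used in [PS], the structure of $\K$ and the deformed defining relation for $dr$ in \eqref{eqrelk} are substantially more delicate, and what makes the computation go through is Proposition~\ref{firstbigthm}, whose proof rests on the operadic quasi-isomorphism $\mathcal{O}' \to \Assoc_+$ of Theorem~\ref{compop}.
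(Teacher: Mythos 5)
Your proposal is an accurate summary of the proof of Theorem \ref{bigtheorem}, and that is in fact all the paper offers for the present statement: its entire proof is the single sentence ``The same arguments work,'' pointing back to Theorem \ref{bigtheorem}. So at the level of strategy you and the paper coincide.

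The gap is that, as written, your argument never touches what the statement actually asserts, namely the transfer of the construction to the subcategory $\Cat^0_{\dgwu}(\Bbbk)$ (weak units satisfying $\id_x\circ\id_x=\id_x$) with the \emph{refined} generating sets $I$ and $J$. Every ingredient you invoke --- Proposition \ref{propt5}, Proposition \ref{propappc}, Proposition \ref{firstbigthm}, Corollary \ref{corki2}, Theorem \ref{compop} --- is stated for $\Cat_\dgwu(\k)$ and the operad $\mathcal{O}^\prime$. In the refined setting the free functor, and hence the Appendix~C pushout formula on which both of your acyclicity arguments rest, is governed by the operad $\mathcal{O}$ of [PS] rather than by $\mathcal{O}^\prime$, so the acyclicity of the higher summands must be fed by the corresponding quasi-isomorphism $\mathcal{O}\to\Assoc_+$ from [PS] (or deduced from Theorem \ref{compop} together with the quasi-isomorphism $\mathcal{O}^\prime\to\mathcal{O}$ cited in the proof of Lemma \ref{lemmae1}) rather than by Theorem \ref{compop} directly; the category $\K$ must be replaced by its refined analogue, for which the analogue of Proposition \ref{firstbigthm} and Corollary \ref{corki2} has to be re-established; and the lifting argument behind $Fib=J\text{-}inj$ (Lemma \ref{Fibinj}) requires knowing that the pretriangulated hull of a refined weakly unital dg category is again refined, i.e.\ that the identities \eqref{eqidx} satisfy $(\id_X)^2=\id_X$, which is not an instance of anything you cite. (On the positive side, Lemma \ref{calculation} reduces to the simpler [PS, Lemma 2.8] in this setting.) None of these substitutions is automatic from the results as stated; identifying them and checking that each goes through is precisely the content that distinguishes a proof of this theorem from a proof of Theorem \ref{bigtheorem}, and your write-up is silent on all of them.
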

\begin{proof}
The same arguments work.
\end{proof}
\endcomment

\section{\sc A Quillen equivalence between $\Cat_{\dgwu}(\Bbbk)$ and $\Cat_{\dg}(\Bbbk)$}\label{s.quieq}
Let $\C$ and $\D$ be two model categories. Recall that a \emph{Quillen pair} of functors $L: \C \leftrightarrows \D: R$ is an adjoint pair of functors such that $L$ preserves cofibrations and acyclic cofibrations, or equivalently, $R$ preserves fibrations and acyclic fibrations, [Ho, 1.3], [Hi, 8.5]. These two conditions are sufficient to show that the Quillen pair of functors descends to a pair of adjoint functors 
\begin{equation}\label{adjho}
L: Ho(\C) \leftrightarrows Ho(\D): R
\end{equation} 
between the homotopy categories. \\
When $\C$ is cofibrantly generated, there is a manageable criterium for an adjoint pair of functors to be a Quillen pair [\cite{Ho}, Lemma 2.1.20]:
\begin{prop}\label{propHovey}
Let $\C, \D$ be closed model categories, with $\C$ cofibrantly generated with generating cofibrations $I$ and generating acyclic cofibrations $J$. Let $L: \C \leftrightarrows \D: R$ be an adjoint pair of functors. Assume that $L(f)$ is a cofibration for all $f \in I$ and $L(f)$ is a trivial cofibration for all $f \in J$. Then $(L,R)$ is a Quillen pair.
\end{prop}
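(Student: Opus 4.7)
The plan is to verify the defining property of a Quillen pair on the right adjoint side, namely that $R$ preserves fibrations and acyclic fibrations. The key input is that $\C$ is cofibrantly generated, so its fibrations are exactly $J\text{-inj}$ and its acyclic fibrations are exactly $I\text{-inj}$. Combined with the classical adjointness of lifting problems, this reduces the whole statement to the two hypotheses on $L$.

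First I would record the adjointness of lifting. For any morphism $f$ in $\C$ and any morphism $g$ in $\D$, a commutative square
\begin{equation*}
\begin{tikzcd}
L(X)\ar[r]\ar[d,"L(f)"'] & E\ar[d,"g"]\\
L(Y)\ar[r] & B
\end{tikzcd}
\end{equation*}
together with a lift $L(Y)\to E$ corresponds, under the adjunction $\Hom_\D(L(-),-)\simeq\Hom_\C(-,R(-))$, to a commutative square with $f$ on the left and $R(g)$ on the right, together with the adjoint lift. Consequently, $L(f)$ has LLP w.r.t.\ $g$ in $\D$ if and only if $f$ has LLP w.r.t.\ $R(g)$ in $\C$.

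Next I would exploit this translation twice. Suppose $g$ is an acyclic fibration in $\D$. Then $g$ has RLP w.r.t.\ all cofibrations in $\D$; in particular, by hypothesis, $g$ has RLP w.r.t.\ $L(f)$ for every $f\in I$. By the adjointness above, $R(g)$ has RLP w.r.t.\ every $f\in I$, i.e.\ $R(g)\in I\text{-inj}$, which by the cofibrant generation of $\C$ means that $R(g)$ is an acyclic fibration. Analogously, if $g$ is a fibration in $\D$, then it has RLP w.r.t.\ all acyclic cofibrations, so in particular w.r.t.\ every $L(f)$ with $f\in J$ (by hypothesis these are acyclic cofibrations); adjointness gives $R(g)\in J\text{-inj}$, which equals the class of fibrations in $\C$. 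Hence $R$ preserves fibrations and acyclic fibrations, so $(L,R)$ is a Quillen pair.

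There is no real obstacle here: the argument is purely formal once one assembles the characterisations $\mathrm{Fib}(\C)=J\text{-inj}$ and $\mathrm{Fib}(\C)\cap W=I\text{-inj}$ from the cofibrantly generated structure on $\C$ with the standard adjoint reformulation of lifting problems. No concrete computation in $\Cat_\dgwu(\k)$ or $\Cat_\dg(\k)$ is required at this level of generality; the proposition is stated abstractly and the proof is short and entirely formal.
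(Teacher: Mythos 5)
Your argument is correct and complete: the adjoint reformulation of lifting problems, combined with the characterisations $\mathrm{Fib}(\C)=J\text{-inj}$ and $\mathrm{Fib}(\C)\cap W=I\text{-inj}$ in a cofibrantly generated model category, does reduce everything to the two hypotheses on $L$, and this is exactly the standard proof of [Ho, Lemma 2.1.20]. The paper itself gives no proof of this proposition (it is quoted from Hovey), so there is nothing to compare beyond noting that your argument is the canonical one.
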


Let $C \in \Cat_{\dgwu}(\Bbbk)$. Define 
\[ L_1(C) := C/I, \]
where $I$ is the dg-category ideal generated by all $p_n(..), n \geq 2.$ Clearly $L_1(\C) \in \Cat_{\dg}(\Bbbk)$. This assignment $C \mapsto L_1(\C)$ gives rise to a functor $L_1: \Cat_{\dgwu}(\Bbbk) \rightarrow \Cat_{\dg}(\Bbbk)$. \\
Let $\D \in \Cat_{\dg}(\Bbbk)$. Define  $R_1: \Cat_{\dg}(\Bbbk) \rightarrow \Cat_{\dgwu}(\Bbbk)$
as the fully-faithful embedding from Example \ref{exwusu}. 

\begin{prop}\label{propqp}
The following statements are true:
\begin{itemize}
\item[(1)] there is an adjunction $$ \Hom_{\Cat_{\dg}(\Bbbk)}(L_1(\C), \D) \cong \Hom_{\Cat_{\dgwu}(\Bbbk)}(\C, R_1(\D)) $$
\item[(2)] the functors $$L_1: \Cat_{\dgwu}(\Bbbk) \leftrightarrows \Cat_{\dg}(\Bbbk): R_1 $$
form a Quillen pair of functors.
\end{itemize}
\end{prop}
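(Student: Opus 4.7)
For part (1), the plan is to use the universal property of the quotient. A weakly unital dg functor $F\colon \C\to R_1(\D)$ must, by Definition \ref{wudeffunctor}, satisfy $F(p_n^\C(f_1,\dots,f_n)) = p_n^{R_1(\D)}(F(f_1),\dots,F(f_n))$ for all $n\ge 1$. By construction (Example \ref{exwusu}), one has $p_n^{R_1(\D)}=0$ for $n\ge 2$, so $F$ sends every element of the ideal $I$ generated by $\{p_n^\C(-)\mid n\ge 2\}$ to zero. Consequently $F$ factors uniquely through a map of underlying (non-unital) dg categories $\overline{F}\colon L_1(\C)=\C/I\to \D$. Since $F(\id_x)=\id_{F(x)}$ in $R_1(\D)=\D$, the map $\overline{F}$ is a unital dg functor. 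Conversely, any unital dg functor $G\colon L_1(\C)\to \D$ determines a weakly unital dg functor $\C\to R_1(\D)$ by pre-composing with the canonical projection $\C\to L_1(\C)$ and observing that the commuting square with $p^\C$ and $p^{R_1(\D)}$ holds tautologically. This establishes the desired bijection, which is clearly natural.

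For part (2), the plan is to apply Proposition \ref{propHovey} to the sets $I$ and $J$ of generating cofibrations and generating acyclic cofibrations in $\Cat_\dgwu(\k)$, and verify that $L_1$ sends them to (acyclic) cofibrations in Tabuada's model structure on $\Cat_\dg(\k)$. First I would dispose of the easy generators. The map $L_1(Q)\colon \emptyset\to L_1(A)$ is the inclusion of the empty category into the unital dg category with a single object and endomorphism algebra $\k$, which is the analogue of $Q$ and a generating cofibration in $\Cat_\dg(\k)$. For the graph-based generators, $\mathcal{P}(n)=FU(P(n))$ and $\mathcal{C}(n)=FU(C(n))$ are freely generated over the operad $\mathcal{O}^\prime$ by graphs with no loops at the vertices other than the designated units, so the quotient by $I$ collapses exactly the redundant higher operations and yields the strictly unital free constructions. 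Hence $L_1(\beta(n))$ and $L_1(\alpha(n))$ coincide, up to isomorphism, with the corresponding strictly unital generators appearing in Tabuada's lists; in particular $L_1(\beta(n))$ is a cofibration and $L_1(\alpha(n))$ is an acyclic cofibration in $\Cat_\dg(\k)$.

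The main obstacle, and the only case requiring a specific calculation, is the behavior of $L_1$ on the map $\kappa\colon A\to\K$. The key observation I would use is that $L_1(\K)$ is isomorphic to Kontsevich's dg category $\mathcal{K}$: passing to the quotient by $I$ kills the terms $p_2(1,f)$ and $p_2(f,1)$ in the defining relation \eqref{eqrelk} for $dr$, and the remaining relations $df=dg=0$, $dh_0=g\circ f-\id_0$, $dh_1=f\circ g-\id_1$, and the reduced $dr=h_1\circ f-f\circ h_0$ are exactly those of $\mathcal{K}$ given in \eqref{eqkstrict}. Thus $L_1(\kappa)\colon \k\to \mathcal{K}$ is the Tabuada generating acyclic cofibration classifying a homotopy equivalence. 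This completes the verification of the hypotheses of Proposition \ref{propHovey}, and proves that $(L_1,R_1)$ is a Quillen pair.
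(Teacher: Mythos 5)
Your proposal is correct and follows essentially the same route as the paper: part (1) is the observation that any weakly unital dg functor into $R_1(\D)$ kills all $p_n$, $n\ge 2$, and hence factors through the quotient $L_1(\C)=\C/I$; part (2) applies Proposition \ref{propqp}'s cited criterion (Proposition \ref{propHovey}) after identifying $L_1(Q)$, $L_1(\beta(n))$, $L_1(\alpha(n))$, $L_1(\kappa)$ with Tabuada's generating (acyclic) cofibrations. Your explicit check that $L_1(\K)\cong\mathcal{K}$ (the $p_2$ terms in the relation for $dr$ dying in the quotient) is a detail the paper leaves implicit, but it is the same argument.
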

\begin{proof}
$(1):$ any morphism $F: C \rightarrow R_1(D)$ sends $p_n(..), n \geq 2$ to $0$, since $D \in \Cat_{\dg}(\Bbbk)$, and therefore this morphism is the same as a morphism $L_1(C) \mapsto D$. \\
$(2):$ the dg categories $\{ L_1(\beta(n)), L_1(Q)\}$ form exactly the set $I$ of generating cofibrations for the Tabuada closed model structure on $\Cat_{\dg}(\Bbbk)$, and the dg categories $\{ L_1(\alpha(n)), L_1(\kappa)\}$ form the set of generating acyclic cofibrations for this model strcuture. The statement follows then by Proposition \ref{propHovey}.
\end{proof}

Recall that a Quillen pair $L: \C \leftrightarrows \D: R$ is called a \emph{Quillen equivalence} if it satisfies the following condition: \\
For a cofibrant $X \in \C$ and a fibrant $Y \in \D$, a morphism $f: LX \rightarrow Y$ is a weak equivalence in $\D$ if and only if the adjoint mo, [Ho, 1.3.3], [Hi, 8.5]. This condition implies the corresponding adjoint pair of functors between the homotopy categories \ref{adjho} is an adjoint \emph{equivalence} of categories.

\begin{theorem}\label{quieq}
The Quillen pair of functors
\[ L_1: \Cat_{\dgwu}(\Bbbk) \leftrightarrows \Cat_{\dg}(\Bbbk): R_1 \]
is a Quillen equivalence.
\end{theorem}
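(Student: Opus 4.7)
The plan is to verify the standard unit/counit criterion for a Quillen equivalence. First I would dispose of the counit direction: since $R_1$ is the fully faithful embedding of Example \ref{exwusu}, the higher Taylor components $p_n$ for $n \ge 2$ already vanish in $R_1 Y$ for any $Y \in \Cat_\dg(\k)$, so the ideal quotiented out by $L_1$ is trivial and the counit $L_1 R_1 Y \to Y$ is literally an isomorphism. Since every object of $\Cat_\dg(\k)$ is fibrant under Tabuada's model structure, and since $R_1$ neither alters hom-complexes nor changes the set of objects (hence preserves and reflects both (W1) and (W2)), it suffices to prove that for every cofibrant $X \in \Cat_\dgwu(\k)$, the unit $\eta_X \colon X \to R_1 L_1 X$ is a weak equivalence.

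The map $\eta_X$ is the identity on objects, so (W2) follows from (W1), and it remains to show that for each pair $a, b \in \Ob(X)$ the canonical projection $X(a,b) \to L_1 X(a,b)$ is a quasi-isomorphism. I would proceed by transfinite induction on the small-object-argument filtration that presents $X$ as an $I$-cell complex, where $I = \{Q, \beta(n)\}_{n \in \mathbb{Z}}$. Since $L_1$ is a left adjoint, it commutes with colimits and sends this filtration of $X$ to the analogous cell filtration of $L_1 X$ by generating cofibrations of $\Cat_\dg(\k)$. The base case $X = \emptyset$ is trivial; attaching new objects via $Q$ reduces to the free-on-one-object situation, where the unit $A \to R_1 L_1 A$ is a quasi-isomorphism by Theorem \ref{compop} applied in arities $0$ and $1$.

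The inductive step concerns a pushout $X'$ obtained from $X$ along some $\beta(n) \colon \mathcal{C}(n) \to \mathcal{P}(n)$. Here I would invoke Proposition \ref{propappc} to describe the new hom-complexes explicitly as direct sums of tensor products of the form $\mathcal{O}'(k) \otimes X(v,b) \otimes D(n) \otimes \cdots \otimes D(n) \otimes X(a,u)$. Applying $L_1$ gives the parallel description with $\mathcal{O}'(k)$ replaced by $\Assoc_+(k)$. The decisive input is then Theorem \ref{compop}: the map of dg operads $\mathcal{O}' \to \Assoc_+$ is a quasi-isomorphism in every arity. Combining this with the inductive hypothesis and the K\"unneth formula (valid since $\k$ is a field) shows that each summand on the source maps quasi-isomorphically to the corresponding summand on the target; passing to the transfinite colimit via the standard exhaustive-filtration argument then yields (W1) for $X'$.

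The hard part will be the bookkeeping needed to make the term-by-term comparison of the two pushout formulas rigorous, in particular to verify that the operadic piece $\mathcal{O}'(k)$ appearing in each summand of the pushout in $\Cat_\dgwu(\k)$ is sent by $L_1$ precisely to the operadic piece $\Assoc_+(k)$ of the corresponding summand of the pushout in $\Cat_\dg(\k)$, and that the spectral sequence induced by the filtration on $k$ really degenerates as required. This is essentially a translation of the computational core of Appendix C from the $\mathcal{O}'$-algebraic setting into the $\Assoc_+$-algebraic one, and is exactly where the full force of Theorem \ref{compop} becomes indispensable.
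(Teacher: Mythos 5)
Your proposal is correct, and it rests on the same two pillars as the paper's proof: reduction to the case where $X$ is an $I$-cell, and the quasi-isomorphism $\mathcal{O}^\prime\to\Assoc_+$ of Theorem \ref{compop} combined with the K\"unneth formula over the field $\k$. The organization, however, is genuinely different. You first observe that the counit $L_1R_1Y\to Y$ is an isomorphism and that $R_1$ preserves and reflects (W1) and (W2), reducing everything to showing the unit $X\to R_1L_1X$ is a weak equivalence for cofibrant $X$; you then run a transfinite induction over the cell filtration, comparing at each attachment the two pushout formulas (Proposition \ref{propappc} and its $\Assoc_+$-analogue) summand by summand. The paper instead keeps the two-variable criterion, forms the cones $M=\Cone(L_1X(x,x')\to Y(fx,fx'))$ and $N=\Cone(X(x,x')\to R_1Y(f^*x,f^*x'))$, and identifies $\Cone(\omega\colon N\to M)$ in a single step with $F_{\tilde{\mathcal{O}}}(V)(x,x')$, where $\tilde{\mathcal{O}}=\Ker(\mathcal{O}^\prime\to\Assoc_+)$ is arity-wise acyclic and $V$ is the generating graph of the $I$-cell $X$; this exploits the fact that an $I$-cell is globally semi-free over $\mathcal{O}^\prime$, so no induction over individual attachments is needed. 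The trade-off: the paper's global cone argument is shorter but hides the same filtration/spectral-sequence issue (the extra differential of the $I$-cell mixes the summands of $F_{\tilde{\mathcal{O}}}(V)$) that you explicitly flag as the remaining bookkeeping; your cell-by-cell route localizes that bookkeeping to one attachment at a time, at the cost of invoking Proposition \ref{propappc} in a situation where the attaching map $\mathcal{C}(n)\to X$ need not be a fully faithful embedding, so strictly one should appeal to the general coequalizer description of [PS, Prop.\ 1.18] there (the paper's own use of Proposition \ref{propappc} in the proof of Theorem \ref{bigtheorem} has the same looseness). Neither issue is a gap in the idea; both proofs are driven by the acyclicity of $\tilde{\mathcal{O}}$.
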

\begin{proof}\label{proof.quieq}
Let $X \in \Cat_{\dgwu}(\Bbbk)$ be cofibrant and $Y \in \Cat_{\dg}(\Bbbk)$ fibrant. We have to prove that $f: L_1X \rightarrow Y$ is a weak equivalence in $\Cat_{\dg}(\Bbbk)$ if and only if the adjoint morphism $f^{*}: X \rightarrow R_1Y$ is a weak equivalence in $\Cat_{\dgwu}(\Bbbk)$, \\
It is enough to prove the statement for the case when $X$ is an $I$-cell. Indeed, by the small object argument, for any $X$ there exist an $I$-cell $X^\prime$ such that $p\colon X^\prime\to X$ is an acyclic fibration. The Quillen left adjoint $L$ maps the weak equivalences between cofibrant object to weak equivalences, by [Hi, Prop. 8.5.7]. Therefore, $L(p)\colon L(X^\prime)\to L(X)$ is a weak equivalence. There is a map $i\colon X\to X^\prime$ such that $p\circ i=\id$, given by the RLP. By 2-of-3 axiom, $i$ is a weak equivalence, and $L(i)$ also is. \\
We assume that $X$ is an $I$-cell. 
Denote by $V$ the graded graph of generators of $X$. We need to prove that for any objects $x,x' \in X$, the cone $M := Cone(f: L_1X(x,x') \rightarrow Y(fx,fx'))$ is acyclic if and only if the cone $N := Cone(f^{*}: X(x,x') \rightarrow R_1Y(f^{*}x,f^*x'))$. Denote by $\tilde{\mathcal{O}} := Ker(P: \mathcal{O}' \rightarrow \mathcal{A}ssoc_+)$, where $P$ is the dg operad map sending all $p_n(...), n\geq 2$ to $0$. \\
There is a canonical map $\omega: N \rightarrow M$, and $Cone(\omega)$ is quasi-isomorphic to $F_{\tilde{\mathcal{O}}}(V)(x,x')$, where $F_{\tilde{\mathcal{O}}}(V)$ is the free algebra over $\tilde{\mathcal{O}}$ generated by $V$, with an extra differential coming from the differential in the $I-$cell $X$. Since the dg operad $\mathcal{O}'$ is quasi isomorphic to $\mathcal{A}ssoc_{+}$, $\tilde{\mathcal{O}}$ is acyclic. Therefore $F_{\tilde{\mathcal{O}}}(V)$ is acyclic by the K\"{u}nneth formula, and so $M$ is quasi isomorphic to $N$, by the acyclicity of $Cone(\omega)$. We conclude that $M$ is acyclic if and only if $N$ is.
\end{proof}

\comment
In our previous work [\cite{PS}, Prop 3.2, Thm 3.3] we proved that 
\begin{equation}\label{old.adj}
L: \mathcal{C}at^{'}_{dgwu}(\Bbbk) \leftrightarrows \mathcal{C}at_{dg}(\Bbbk): R 
\end{equation}
is a Quillen pair of functors, and moreover it is a Quillen equivalence.
Therefore we have:
\begin{theorem}
There is an equivalence of categories 
\begin{equation}\label{adj}
Ho(\mathcal{C}at_{dgwu}(\Bbbk)) \cong Ho(\mathcal{C}at^{'}_{dgwu}(\Bbbk))
\end{equation}
\end{theorem}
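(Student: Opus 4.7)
The plan is to deduce this equivalence of homotopy categories by triangulating through $\Ho(\Cat_\dg(\k))$, using two Quillen equivalences that are already at our disposal. From Theorem \ref{quieq} of the present paper, the Quillen pair
$$L_1\colon \Cat_\dgwu(\k)\rightleftarrows \Cat_\dg(\k)\colon R_1$$
is a Quillen equivalence, hence descends to an equivalence of categories $\mathbf{L}L_1\colon \Ho(\Cat_\dgwu(\k))\xrightarrow{\sim}\Ho(\Cat_\dg(\k))$, with quasi-inverse $\mathbf{R}R_1$. From [PS, Prop.~3.2, Thm.~3.3], the analogous Quillen pair
$$L^0_1\colon \Cat^0_\dgwu(\k)\rightleftarrows \Cat_\dg(\k)\colon R^0_1,$$
defined by the same formulas on the subcategory $\Cat^0_\dgwu(\k)$, is also a Quillen equivalence, yielding an equivalence $\mathbf{L}L^0_1\colon \Ho(\Cat^0_\dgwu(\k))\xrightarrow{\sim}\Ho(\Cat_\dg(\k))$.

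The composite
$$\Ho(\Cat^0_\dgwu(\k))\xrightarrow{\mathbf{L}L^0_1} \Ho(\Cat_\dg(\k))\xrightarrow{\mathbf{R}R_1}\Ho(\Cat_\dgwu(\k))$$
is then an equivalence of categories, being a composition of two equivalences. This yields the statement.

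There is essentially no real obstacle: the argument is formal, given that the two input Quillen equivalences are already established (both relying ultimately on the operadic quasi-isomorphism $\mathcal{O}^\prime\to\Assoc_+$ of Theorem \ref{compop}). The only point worth tracking is the bookkeeping of notation between the present paper and [PS], and it is worth recording the intrinsic meaning of the constructed equivalence. Namely, both right adjoints $R_1$ and $R^0_1$ are the fully faithful inclusions from Example \ref{exwusu} of strictly unital dg categories (which vacuously verify $\id_x\circ\id_x=\id_x$), so at the underlying-functor level one has $R_1=\iota\circ R^0_1$ where $\iota\colon \Cat^0_\dgwu(\k)\hookrightarrow \Cat_\dgwu(\k)$ is the tautological inclusion. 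Combined with the fact that the unit $\Id\Rightarrow R^0_1\circ L^0_1$ of the [PS] Quillen equivalence is a weak equivalence on every object, this identifies the composite $\mathbf{R}R_1\circ \mathbf{L}L^0_1$ with the homotopical functor induced by $\iota$. In this way the equivalence is realized by the natural inclusion, confirming that dropping the condition $\id_x\circ\id_x=\id_x$ does not affect the homotopy theory.
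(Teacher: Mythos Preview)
Your argument is correct and matches the paper's own proof essentially verbatim: triangulate through $\Ho(\Cat_\dg(\k))$ using Theorem~\ref{quieq} on one side and [PS, Prop.~3.2, Thm.~3.3] on the other, then compose the two equivalences. The paper additionally remarks on an alternative route---constructing a direct Quillen adjunction between $\Cat_\dgwu(\k)$ and $\Cat^0_\dgwu(\k)$ and proving it is a Quillen equivalence by the same method as Theorem~\ref{quieq}---which is in the spirit of your closing paragraph about the inclusion $\iota$; note however that your claim that the unit $\Id\Rightarrow R_1^0\circ L_1^0$ is a weak equivalence on \emph{every} object is only guaranteed on cofibrant objects, so that final identification should be phrased at the level of derived functors.
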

\begin{proof}
By Theorem \ref{quieq}, we know that $Ho(\mathcal{C}at_{dgwu}(\Bbbk))$ is equivalent to $Ho(\mathcal{C}at_{dg}(\Bbbk))$, and since the adjunction \ref{old.adj} is also a Quillen equivalence, it follows that $Ho(\mathcal{C}at_{dgwu}(\Bbbk)) \cong Ho(\mathcal{C}at^{'}_{dgwu}(\Bbbk)).$ \\
An alternative proof follows by considering the model category structure given in \ref{cor.mod}, and similarly define a Quillen adjunction $L': \mathcal{C}at_{dgwu}(\Bbbk) \leftrightarrows \mathcal{C}at^{'}_{dgwu}(\Bbbk): R'$, which can be proven mutatis mutandis as in proof of \ref{quieq} to be a Quillen equivalence.
\end{proof}
\endcomment

\comment

\section{\sc An application to computation of $\Hot(C,D)$ and $\Hot^{\Delta}(C,D)$}

\subsection{\sc }
Let $\mathcal{M}$ be a closed model category, denote by $\Ho(\mathcal{M})$ its homotopy category. In order to compute the Hom-sets $\Ho(\mathcal{M})(X,Y)$, one has to replace $X$ by any cofibrant $r(X)$ such that there is a trivial fibration $r(X)\to X$, to replace $Y$ by any fibrant $i(Y)$ such that there is a trivial cofibration $Y\to i(Y)$, and to compute the set 
$$
\Ho(\mathcal{M})(X,Y):=\mathcal{M}(r(X),i(Y))/\sim
$$
where $\sim$ is an equivalence relation, derived from either a path object of $i(Y)$ or a cylinder object of $r(Y)$, see e.g. ???.

Trying to apply this recipy to the category $\Cat_\dg(\k)$ of {\it strictly} unital dg categories over $\k$, where $\k$ is a field, we experience the following difficulty. 

At first, there is no problems with $i(Y)$, any object in $\Cat_\dg(\k)$ is fibrant, so we can simply take $i(Y)=Y$.
On the other hand, the most natural explicit choice for $r(X)$, equal to $\Cobar(\Bar(X))$, is not a strictly unital dg category. To explain it, consider the simplest case of a dg category with a single object, that is, of a dg algebra over $\k$.

Let $A$ be a dg algebra, {\it not unital}. Recall that $\Bar(A)$ is the coaugmented {\it counital} dg coalgebra defined as the sum total complex of the bicomplex
$$
\dots\to \underset{\deg=-3}{A^{\otimes 3}}\to \underset{\deg=-2}{A^{\otimes 2}}\to \underset{\deg=-1}{A}\xrightarrow{0} \underset{\deg=0}{\k}\to 0
$$
where the differential is the bar-differential. One has:
\begin{equation}\label{dbar}
\begin{aligned}
\ &d_\Bar(a_1\otimes a_2)=a_1\cdot a_2\\
&d_\Bar(a_1\otimes a_2\otimes a_2)=(a_1\cdot a_2)\otimes a_3-a_1\otimes (a_2\otimes a_3)\\
&\dots
\end{aligned}
\end{equation}

Let $C$ be a coaugmented counital dg coalgebra over $\k$. Its cobar-complex is the {\it non-unital} dg algebra equal to the total sum complex of the bicomplex
$$
0\to \underset{\deg=1}{\overline{C}}\to \underset{\deg=2}{\overline{C}^{\boxtimes 2}}\to\underset{\deg=3}{\overline{C}^{\boxtimes 3}}\to \dots
$$
where $\boxtimes$ is $\otimes_\k$, as well as $\otimes$, but we use this notation to distinguish the tensor products in the two complexes, $\overline{C}=C/\k$, and the differential is the cobar-differential:
\begin{equation}\label{dcobar}
\begin{aligned}
\ &d_\Cobar(c)=\Delta(C)\\
&d_\Cobar(c_1\boxtimes c_2)=\Delta(c_1)\boxtimes c_2-c_1\boxtimes \Delta(c_2)\\
&\dots
\end{aligned}
\end{equation}
where $\Delta\colon C\to C\boxtimes C$ is the coproduct. 

The functors $\Bar$ and $\Cobar$ are adjoint on suitable categories (the category of non-unital dg algebras and the category of ind-conilpotent coaugmented dg coalgebras), with $\Cobar$ the left adjoint. 
It is well-known that the natural map of dg algebras (given by the adjunction unit)
$$
p\colon \Cobar(\Bar(A))\to A
$$
is a quasi-equivalence of {\it non-unital} dg algebras. 

Assume now that the dg algebra $A$ is unital, one may ask oneself whether it is true that $r(A)=\Cobar(\Bar(A))$ is also unital. 

There is a natural ``unit element'' $1=1_A\in (A^{\boxtimes 1})^{\otimes 1}\subset \Cobar(\Bar(A))$. However, it is only a {\it weak unit}.

For example, take any $a\in (A^{\boxtimes 1})^{\otimes 1}$ with $d_A(a)=0$, one has:
$$
1\boxtimes a-a=d_\tot(1\otimes a)
$$
where $d_\tot=d_\Cobar+d_\Bar$, and $1\otimes a\in (A\otimes A)^{\boxtimes 1}$. Similarly,
$$
a\boxtimes 1-1=d_\tot(a\otimes 1)
$$
In fact, one can endow $\Cobar(\Bar(A))$ with Kontsevich-Soibelman wu structure, see [PS, ???].
{\it However, this wu dg coalgebra is not an $I\text{-}cell$, and is not cofibrant}.
(It is cofibrant as a non-unital dg algebra, of course). 

We conclude that the cobar-bar resolution does not belong to the category of strictly unital dg algebras, and, as an object of the category of weakly unital dg algebras, it is not cofibrant.

\subsection{\sc }
Denote by $\Hot_\dg(\k)$, or shortly $\Hot_\dg$, the homotopy category of the closed model category $\Cat_\dg(\k)$, and denote by $\Hot_\dgwu(\k)$, or shortly $\Hot_\dgwu$, the homotopy category of the closed model category $\Cat_\dgwu(\k)$.

We want to compute $\Hot_\dg(C,D)$.\footnote{To the best of our knowledge, the existing proof is based on the description of $\Hot$ via quasi-functors, given in [To]. The approach we suggest here is more elementary and direct.} To this end, we find an explicit cofibrant replacement $r(C)$ of $C$, which is a {\it weakly unital dg category} (and is cofibrant in $\Cat_\dgwu(\k)$).\footnote{A cofibrant replacement of $C$ in $\Cat_\dg(\k)$ exists as well, but any construction uses the Quillen small object argument, and thus is not canonical and less manageable than our $r(C)$ in $\Cat_\dgwu(\k)$.}

We employ Theorem \ref{quieq} by which
\begin{equation}
\Hot_\dg(L_1(C), D)\simeq \Hot_\dgwu(C,R_1(D))
\end{equation}
(for $C$ cofibrant in $\Cat_\dgwu(\k)$ and $D$ fibrant in $\Cat_\dg(\k)$).

Any $D\in \Cat_\dg(\k)$ is fibrant, and thus $R_1(D)$ also is. 
For a dg category $C$, we construct its cofibrant replacement $r(C)$ in $\Cat_\dgwu(\k)$, such that $L_1(r(C))=C$.
Then we have, for {\it any} $C,D\in \Cat_\dg(\k)$:
$$
\Hot_\dg(C,D)\simeq \Hot_\dgwu(r(C), R_1(D))
$$
Our resolution $r(C)$ is a fattened version of $\Cobar(\Bar(C))$, considered above.
It is referred to as {\it the fattened cobar-bar resolution}. 

\subsection{\sc The fattened cobar-bar resolution}

\endcomment

\comment
Let $A$ be a coaugmented counital dg cocategory. 
Denote by $\overline{A}$ the dg graph having the same objects as $A$ and whose morphisms are defined as
$$
\overline{A}(x,y)=\begin{cases}
A(x,y)&x\ne y\\
A(x,x)/\k\id_x&x=y
\end{cases}
$$
Consider the free wu dg category generated the dg graph $\overline{A}[-1]$, denote it $\Free_{\mathcal{O}^\prime}(\overline{A}[-1])$.

It is known that the free {\it unital} dg category generated by $\overline{A}[-1]$ admits the {\it cobar differential}, see \eqref{dcobar}. Our goal is to prove that a similar differential is defined on $\Free_{\mathcal{O}^\prime}(\overline{A}[-1])$.

\begin{lemma}\label{lemmafcb1}
There is a differential on $\Free_{\mathcal{O}^\prime}(\overline{A}[-1])$, called the cobar-differential $D_\Cobar$, which makes it a cofibrant wu dg category. 
\end{lemma}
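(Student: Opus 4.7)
The plan is to specify $D_\Cobar$ first on the generators $\overline{A}[-1]$ and then extend it by the operadic Leibniz rule to the whole free $\mathcal{O}^\prime$-algebra. Concretely, for a homogeneous $\alpha \in \overline{A}(x,y)$, regarded as a generator in $\overline{A}[-1]$, I would set
\[
D_\Cobar(\alpha) = -d_A(\alpha) + \overline{\Delta}(\alpha),
\]
where $\overline{\Delta}(\alpha) = \sum \alpha_{(1)} \otimes \alpha_{(2)}$ is the reduced coproduct of $A$, reinterpreted, up to the Koszul signs induced by desuspension, as the element $\sum m(\alpha_{(1)}, \alpha_{(2)})$ built with the associative composition $m \in \mathcal{O}^\prime(2)$. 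This mirrors \eqref{dcobar} in the strictly unital case. Extension to the free algebra is forced by the general derivation rule for a free algebra over a dg operad, namely
\[
D_\Cobar\bigl(\pi(f_1,\dots,f_n)\bigr) = (d_{\mathcal{O}^\prime}\pi)(f_1,\dots,f_n) + \sum_i (-1)^{\epsilon_i}\, \pi(f_1,\dots,D_\Cobar f_i,\dots,f_n),
\]
with $\epsilon_i = |\pi| + |f_1| + \dots + |f_{i-1}|$; such an extension exists and is unique on $T_{\mathcal{O}^\prime}(\overline{A}[-1])$.

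To establish $D_\Cobar^2 = 0$, it suffices to check it on the generators of $\overline{A}[-1]$, since both sides are derivations with respect to all operations of the dg operad $\mathcal{O}^\prime$. On a generator $\alpha$, the computation splits into three pieces: $d_A^2(\alpha) = 0$ by the internal differential on $A$; the cross terms $d_A\overline{\Delta}(\alpha) - \overline{\Delta}(d_A\alpha)$ vanish because the coproduct on the dg cocategory $A$ is a chain map; and the iterated coproduct term $(\overline{\Delta}\otimes \id)\overline{\Delta}(\alpha) - (\id\otimes \overline{\Delta})\overline{\Delta}(\alpha)$ vanishes by coassociativity, using that $m$ is strictly associative inside $\mathcal{O}^\prime$ (relation (i) of \eqref{orel}).

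Cofibrancy is obtained from the coaugmentation filtration. Since $A$ is coaugmented and (implicitly) ind-conilpotent, the dg graph $\overline{A}$ carries an ascending, exhaustive filtration $F_\bullet \overline{A}$ with $F_0 = 0$ and $\overline{\Delta}(F_{n+1}) \subset F_n \boxtimes F_n$. Correspondingly, one gets an ascending filtration of $\Free_{\mathcal{O}^\prime}(\overline{A}[-1])$ by sub-wu-dg-categories, such that the $(n+1)$-st stage is obtained from the $n$-th by freely adjoining the dg graph $(F_{n+1}/F_n)[-1]$, on which $D_\Cobar$ hits only material already present at stage $n$. Each such step is, by definition, a pushout of a coproduct of generating cofibrations $\beta(k)\in I$ (with the boundary data determined by the already-constructed cobar differential). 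Hence the canonical map $\emptyset\to \Free_{\mathcal{O}^\prime}(\overline{A}[-1])$ is an $I$-cell, and therefore cofibrant in $\Cat_\dgwu(\k)$ by Theorem \ref{bigtheorem}.

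The main obstacle I expect is bookkeeping. First, the Koszul signs arising from the desuspension $A\rightsquigarrow \overline{A}[-1]$ must be chosen consistently with the operadic Leibniz rule, otherwise the cancellations for $D_\Cobar^2=0$ will be off by a sign. Second, one must verify that the construction legitimately takes place in $T_{\mathcal{O}^\prime}(\overline{A}[-1])$ rather than $T_{\mathcal{O}^\prime}(A[-1])$: because we have quotiented out $\k\cdot\id_x$, we must check that $\overline{\Delta}$ never outputs a term involving the removed coaugmentation, which is automatic as $\overline{\Delta}$ is by definition the reduced coproduct. All other verifications, including compatibility with the weak units $j_x$ produced by the $0$-ary operation in $\mathcal{O}^\prime$, are formal, since these generators are intrinsic to the free $\mathcal{O}^\prime$-algebra and carry their own differential data from $d_{\mathcal{O}^\prime}$.
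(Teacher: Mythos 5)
Your construction of the differential is essentially the paper's own. The paper writes $D_\Cobar=d_\Cobar+d_\wu$, where $d_\Cobar$ is the coproduct part of \eqref{dcobar} extended by the Leibniz rule over all operations of $\mathcal{O}^\prime$, and $d_\wu$ collects the internal differential of $A$ together with the operadic differential of the $p_n(\dots)$; this is exactly your generator-level formula $-d_A+\overline{\Delta}$ extended by the operadic derivation rule, only bookkept as a sum of two anticommuting squares. Your verification of $D_\Cobar^2=0$ --- reduce to generators because $D_\Cobar^2$ is an $\mathcal{O}^\prime$-algebra derivation with vanishing operadic part (using $d_{\mathcal{O}^\prime}^2=0$), then invoke $d_A^2=0$, the chain-map property of the coproduct, coassociativity and $dm=0$ --- is a correct and more explicit version of the paper's one-line ``one sees \eqref{dcobarD2} directly''.

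Where you go beyond the paper is cofibrancy: the paper's proof establishes only that $D_\Cobar$ squares to zero and is silent on the $I$-cell structure, so your filtration argument is genuinely added content, and you are right that ind-conilpotency of $A$ must be assumed for the coaugmentation filtration to be exhaustive. One step, however, is not literally correct as stated: it is not true that $D_\Cobar$ maps the new generators $(F_{n+1}/F_n)[-1]$ into the stage-$n$ subcategory, because the internal differential $d_A$ preserves, rather than lowers, the coradical filtration, so $D_\Cobar\alpha$ contains the term $-d_A\alpha$ which again lies in stage $n+1$. To realize each stage as a composite of pushouts of the $\beta(k)$ (and of $Q$, to create the objects), you must additionally order the generators inside each associated-graded piece: over the field $\k$ decompose $(F_{n+1}/F_n,\,d_A)$ as $B\oplus H\oplus B'$ with $d_A\colon B'\xrightarrow{\sim}B$ and $d_A|_{B\oplus H}=0$, attach $B$ and $H$ first (their total differential then does lie in the previous stage) and only afterwards $B'$, whose total differential lies in the stage just enlarged by $B$. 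With this standard refinement your cofibrancy argument goes through.
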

\begin{proof}
We define $D_\Cobar$ as the sum 
\begin{equation}\label{dcobarD}
D_\Cobar=d_\Cobar+d_\wu
\end{equation}
where $d_\Cobar$ is defined by \eqref{dcobar} and is extended by the Leibniz rule with respect to all $p_n(\dots)$, and $d_\wu$ is the sum of the underlying differential on $A$ and the differential $dp_n(\dots)$ (likewise \eqref{eqp2}, \eqref{eqp3}). It is clear that $d_\Cobar^2=0$ and $d_\wu^2=0$, one has to show that 
\begin{equation}\label{dcobarD2}
d_\Cobar d_\wu+d_\wu d_\Cobar=0
\end{equation}
One sees \eqref{dcobarD2} directly, which gives $D_\Cobar^2=0$. 
\end{proof}

Let $C$ be a strictly unital dg category, then $A:=\Bar(C)$ is a counital coaugmented dg coalgebra. 

\begin{lemma}\label{lemmafcb2}
There is a weak equivalence 
$$
p\colon \Free_{\mathcal{O}^\prime}(\overline{\Bar(C)}[-1])\to R_1(C)
$$
One has
$$
L_1(\Free_{\mathcal{O}^\prime}(\overline{\Bar(C)}[-1]), D_\Cobar)=\Cobar(\Bar(C))
$$
\end{lemma}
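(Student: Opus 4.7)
The plan is to handle the two claims of the lemma separately. The second (algebraic identification) is essentially formal, while the first (weak equivalence) reduces via Theorem \ref{compop} to the classical bar-cobar quasi-equivalence.

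First I would verify the identification $L_1(\Free_{\mathcal{O}^\prime}(\overline{\Bar(C)}[-1]), D_\Cobar) = \Cobar(\Bar(C))$. The functor $L_1$ quotients a wu dg category by the ideal generated by all operations $p_n$ for $n \geq 2$; on a free $\mathcal{O}^\prime$-algebra this amounts to replacing $\mathcal{O}^\prime$ by its quotient that annihilates those operations, which by the construction of the map in Theorem \ref{compop} is precisely $\Assoc_+$. Hence the underlying graded category of $L_1 \Free_{\mathcal{O}^\prime}(V)$ is the free unital dg category on $V$. For the differentials, the decomposition $D_\Cobar = d_\Cobar + d_\wu$ from Lemma \ref{lemmafcb1} is exactly what we need: $d_\wu$ consists of terms $d p_n(\ldots)$ for $n \geq 2$, which all lie in the ideal killed by $L_1$, while $d_\Cobar$ descends to the classical cobar differential \eqref{dcobar}.

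Next I would construct the weak equivalence $p$ as the composition $R_1(q) \circ \pi$, where $\pi \colon \Free_{\mathcal{O}^\prime}(\overline{\Bar(C)}[-1]) \to \Cobar(\Bar(C))$ is the projection identified in the previous step and $q \colon \Cobar(\Bar(C)) \to C$ is the classical bar-cobar quasi-equivalence of strictly unital dg categories. Since $q$ is a weak equivalence in $\Cat_\dg(\k)$ and the embedding $R_1$ of Example \ref{exwusu} acts as the identity on objects and on hom-complexes, it both preserves and reflects (W1) and (W2). Hence it suffices to show that $\pi$ is a weak equivalence in $\Cat_\dgwu(\k)$. Since $\pi$ is the identity on objects, (W2) is automatic, and the content of the claim reduces to (W1): for every pair of objects $x,y$ the map $\pi_{x,y}$ on hom-complexes is a quasi-isomorphism.

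For (W1), I would introduce an ascending filtration on both hom-complexes by the number of generators of $V = \overline{\Bar(C)}[-1]$ occurring in an element, and verify that $\pi$ is strictly compatible with it. In this filtration, the internal differential of $V$, the internal differential of $\mathcal{O}^\prime$, and each term of $d_\wu$ preserve the count (the $A_\infty$-identities \eqref{eqp2}, \eqref{eqp3} only compose the given arguments and do not introduce new cobar generators), whereas $d_\Cobar$ strictly increases the count by one, since it splits a bar-generator via the coproduct on $\Bar(C)$. Consequently the $E_1$-page of the source spectral sequence is of the shape $\bigoplus_n \mathcal{O}^\prime(n) \otimes V^{\otimes n}$, while on the target one gets the analogous expression with $\Assoc_+$ in place of $\mathcal{O}^\prime$; the induced map at $E_1$ is the componentwise application of $\mathcal{O}^\prime \to \Assoc_+$. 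A K\"unneth argument, combined with Theorem \ref{compop}, then yields a quasi-isomorphism on $E_1$, and both spectral sequences converge degree-wise by the bounded-below grading of $V$.

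The main obstacle will be to check carefully that $d_\wu$ really preserves the generator-count filtration and that no unexpected shift arises from the interaction between $d_\Cobar$ and the operadic compositions in $\mathcal{O}^\prime$; this is a bookkeeping issue but must be done rigorously in order to identify the $E_0$-differential with $d_V + d_{\mathcal{O}^\prime}$ alone. Once this is in place, Theorem \ref{compop} supplies the needed quasi-isomorphism at the $E_1$-stage and the lemma follows by combining the weak equivalence $\pi$ with the classical bar-cobar quasi-equivalence $q$.
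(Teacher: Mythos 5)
Your overall strategy coincides with the paper's: both parts are reduced to (i) the formal observation that $L_1$ applied to a free $\mathcal{O}^\prime$-algebra replaces $\mathcal{O}^\prime$ by $\Assoc_+$, so that the quotient is $\Cobar(\Bar(C))$ with its classical differential, and (ii) the operadic quasi-isomorphism $\mathcal{O}^\prime\to\Assoc_+$ of Theorem \ref{compop} combined with the classical bar-cobar quasi-isomorphism $\Cobar(\Bar(C))\to C$. The paper's own proof is in fact terser than yours: it simply asserts that the induced map $P_*$ of free algebras ``agrees with the differentials and is a quasi-isomorphism'', whereas you supply a filtration argument for that step. The added value of your write-up is therefore exactly the spectral-sequence justification --- and that is also where its one genuine weak point sits.

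The filtration by the number of cobar generators is decreasing with respect to the differential ($d_\Cobar$ raises the count), and it is \emph{not} bounded in each cohomological degree: $V=\overline{\Bar(C)}[-1]$ contains $C$ itself as its weight-one piece, sitting in its original degrees, so words of arbitrary cobar length already occur in a single total degree. Hence ``converges degree-wise by the bounded-below grading of $V$'' is not a valid reason; this is precisely the spectral sequence ``whose first differential is $d_{\mathrm{Bar}}$'' which the paper warns \emph{generally diverges} in Appendix B, and the same caveat about deformed differentials is the point of the remark following Lemma \ref{lemmae1}. The argument can be repaired: the total bar weight (the number of letters of $C$ occurring in an element) is non-increasing under every summand of the differential --- $d_\Cobar$ and the operadic terms preserve it, the bar differential inside a generator lowers it --- and it bounds the cobar word length from above, since each generator has weight at least one. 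So the subspaces of bar weight at most $w$ form an exhaustive increasing system of subcomplexes, on each of which your word-length filtration is bounded by $w$; convergence holds there, the $E_1$-comparison via Theorem \ref{compop} and K\"unneth applies as you describe, and one passes to the colimit. With that correction your proof is complete and matches the paper's intended argument.
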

\begin{proof}
We have a quasi-isomorphism of dg operads $P\colon \mathcal{O}^\prime\to \Assoc_+$ sending all $p_n(\dots), n\ge 2$ to 0. Thus we have an induced map 
$$
P_*\colon (\Free_{\mathcal{O}^\prime}(\overline{\Bar(C)}[-1]), D_\Cobar)\to
(\Free_{\Assoc_+}(\overline{\Bar(C)}[-1]), d_\Cobar+d_\Bar+d_C)
$$
which agrees with the differentials and is a quasi-isomorphism. 

On the other hand, $\Free_{\Assoc_+}(\overline{\Bar(C)}[-1])$ is isomorphic to $\Cobar(\Bar(C))$. The latter dg category is quasi-isomorphic to $C$ by the classical bar-cobar duality. 

The last statement follows directly from the definition of the functor $L_1$.
\end{proof}
\endcomment

\comment
Here we construct a ``fattened'' version of the cobar-resolution of $A$, where $A$ is a {\it unital} dg category over $\k$. To simplify our notations, below we assume that $A$ is a unital dg algebra.
Denote by $1_A$ the unit of $A$. 

We start with a ``fattened'' version of the bar-complex.

Introduce symbols $\xi^k_{x_1,\dots,x_n}$, $n\ge k$, $k\ge 0$, in which $n-k$ of $x_i$s are elements of $A$, and the remaining $k$ among them are {\it formal units},denoted by $1$. Here formal 1s are just formal symbols, {\it not} equal to $1_A$. We assume that
\begin{itemize}
	\item[(a)] the symbols $\xi^k_{x_1,\dots,x_n}$ are $\k$-polylinear in the $n-k$ elements of $A$,
	\item[(b)] $\xi^0_{x_1,\dots,x_n}=x_1\otimes\dots\otimes x_n\in A[1]^{\otimes n}$,
	\item[(c)] $\deg \xi^k_{x_1,\dots,x_n}=\sum\deg x_i-k-n$, where $\deg (1)=0$,
	\item[(d)] $\xi^k_{x_1,\dots,x_n}=0$ if $\sharp\{i|\ x_i=1\}\ne k$.
\end{itemize}
Thus, we can identify the $\k$-vector space of all $\xi^k_{x_1,\dots,x_n}$, for a given $k\ge 0$, with 
$$
\bigoplus_{j\colon \{1,2,\dots,k\}\to \{1,2,\dots,n\}}A[1]^{\otimes(n-k)}[k]=\bigoplus_{j\colon \{1,2,\dots,k\}\to \{1,2,\dots,n\}}A^{\otimes (n-k)}[n]
$$
where the sum is taken over all {\it imbeddings} $j$. (The image of $j$ consists of those indices $i$ for which $x_i=1$).
We denote this vector space by $\widehat{\Bar}_k(A)$.

Denote 
$$
\widehat{\Bar}(A)=\bigoplus_{k\ge 0}\widehat{\Bar}_k(A)
$$
Note that $\widehat{\Bar}_0(A)$ coincides with the underlying graded vector space of the bar-complex \eqref{dbar}.

We endow $\widehat{\Bar}(A)$ with a dg coalgebra structure such that the underlying complex of $\widehat{\Bar}(A)$ is the direct sum 
$$
\widehat{\Bar}(A)=\bigoplus_{k\ge 0}\widehat{\Bar}_k(A)\text{\it \ \ as a complex}
$$
and such that $\widehat{\Bar}_0(A)$ is a dg sub-coalgebra, which coincides with the classical bar-dg-coalgebra. 

The complex $\widehat{\Bar}_k(A)$ is defined by
\begin{equation}\label{dbark}
\begin{aligned}
d_\Bar(\xi^k_{x_1,\dots,x_n})=&d_A+\sum_{i=1}^{n-1}\pm\xi^k_{x_1,\dots,x_{i-1},x_ix_{i+1},\dots,x_n}+\sum_{i|x_i=1}\pm \xi^{k-1}_{x_1,\dots,x_{i-1}, 1_A, x_{i+1},\dots,x_n}
\end{aligned}
\end{equation}
where $d_A$ is the component obtained by extension of the differential in $A$ by the Leibniz rule.
For the second summand, only the summands for which both  $x_i$ or $x_{i+1}$ are not equal to 1 do contribute (formally we allow all summands, adapting the convention $x_i 1=1 x_i=1$, and use that $\xi^k_{x_1,\dots,x_\ell}=0$ if $\sharp\{i| x_i\ne 1\}\ne k$).  

One computes that $d_\Bar^2=0$. 

Now we endow $\widehat{\Bar}(A)$ with a dg coalgebra structure by
\begin{equation}\label{deltabarhat}
\Delta(\xi^k_{x_1,\dots,x_n})=\oplus_{a+b=k}\oplus_{n_1+n_2=n}
\xi^a_{x_1,\dots,x_{n_1}}\otimes \xi^b_{x_{n_1+1},\dots,x_n}
\end{equation}
where only the summands for which 
$\sharp\{1\le i\le n_1|\ x_1=1\}=a$ (and thus $\sharp\{n_1+1\le i\le n|\ x_i=1\}=b$) do contribute to the r.h.s.

\begin{lemma}\label{lemmabarhat}
Formulas \eqref{dbark} and \eqref{deltabarhat} define a dg coalgebra structure on $\widehat{\Bar}_k(A)$. 	
\end{lemma}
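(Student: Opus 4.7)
There are three things to verify: (a) $d_\Bar^2=0$, (b) coassociativity of $\Delta$, and (c) the co-Leibniz rule $\Delta\circ d_\Bar=(d_\Bar\otimes\id+\id\otimes d_\Bar)\circ\Delta$. (The statement should be read on $\widehat{\Bar}(A)=\oplus_k\widehat{\Bar}_k(A)$; the formulas do not respect the $k$-grading because of the last summand of \eqref{dbark}.) The plan is to reduce each identity to the classical bar case plus finitely many ``book-keeping'' identities for the formal units.

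The idea for $d_\Bar^2=0$ is to split $d_\Bar=d_A+d_\mu+d_1$, where $d_\mu$ denotes the multiplication summand and $d_1$ denotes the substitution-by-$1_A$ summand. One checks (i) $d_A^2=0$ and $d_\mu^2=0$ and $[d_A,d_\mu]=0$ exactly as in the classical bar-complex, restricted to the positions where $x_i\ne 1$; (ii) $d_1^2=0$, because replacing two formal units by $1_A$ in any order (with the Koszul sign given by the degrees of the preceding entries) produces a pair of opposite-sign terms in the square; (iii) $[d_A,d_1]=0$ since $d_A(1_A)=0$; (iv) $[d_\mu,d_1]=0$ is the main non-trivial point: expanding, one obtains pairs of terms in which a formal unit $1$ sits next to some $x_j$; the two terms coming from ``substitute then multiply'' cancel the two terms from ``multiply then substitute'' because $x_j\cdot 1_A=1_A\cdot x_j=x_j$ in $A$. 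This is exactly where the unitality of $A$ is used, and it is the most delicate part of the verification.

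For coassociativity, both $(\Delta\otimes\id)\Delta(\xi^k_{x_1,\dots,x_n})$ and $(\id\otimes\Delta)\Delta(\xi^k_{x_1,\dots,x_n})$ equal
\[
\bigoplus_{\substack{a+b+c=k\\ n_1+n_2+n_3=n}}\xi^a_{x_1,\dots,x_{n_1}}\otimes\xi^b_{x_{n_1+1},\dots,x_{n_1+n_2}}\otimes\xi^c_{x_{n_1+n_2+1},\dots,x_n},
\]
with only those summands contributing in which the formal units $1$ in each block agree with the upper indices $a,b,c$. So coassociativity is an immediate combinatorial identity; no signs appear since $\Delta$ is of degree zero and the entries are merely partitioned.

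Finally, for the co-Leibniz rule one again splits $d_\Bar=d_A+d_\mu+d_1$. The parts $d_A$ and $d_\mu$ satisfy the co-Leibniz rule with $\Delta$ by the classical bar computation, block-by-block, noting that $d_\mu$ never crosses the cut used in $\Delta$ except for the single term in which the cut itself is the multiplication site; that boundary term appears on both sides with opposite Koszul signs and cancels exactly as in the classical case. For $d_1$ the verification is: substituting $1$ by $1_A$ in position $i$ strictly before the cut contributes to $(d_\Bar\otimes\id)\Delta$, strictly after the cut to $(\id\otimes d_\Bar)\Delta$, and these are precisely the summands one gets from applying $\Delta$ first and then substituting in the corresponding block. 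The main obstacle here is tracking the Koszul signs introduced by the shift $[1]$ in $A[1]^{\otimes(n-k)}[k]$; one checks that the sign depends only on the sum of degrees of entries to the left of the modified position, which is invariant under splitting a tensor via $\Delta$.

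Combining (a), (b), (c) yields the claim.
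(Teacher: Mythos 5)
The paper offers no proof of this lemma to compare against: the statement is followed by a bare \qed, and the whole construction of the fattened bar complex $\widehat{\Bar}(A)$ sits in a draft section that is commented out of the compiled text, so the verification is simply asserted to be routine. Your sketch therefore supplies an argument where the paper has none, and the organization is the right one: split $d_{\Bar}=d_A+d_\mu+d_1$, reduce everything except the cross-term $[d_\mu,d_1]=0$ to the classical bar complex, and isolate $[d_\mu,d_1]=0$ as the one place where unitality of $A$ is used. Your parenthetical remark that the formula for $d_{\Bar}$ does not preserve the grading by $k$ (because of the $\xi^{k-1}$ summand), so that the lemma must be read on $\widehat{\Bar}(A)=\bigoplus_k\widehat{\Bar}_k(A)$ with $k$ giving only a filtration, is a genuine correction: the draft both claims the decomposition holds ``as a complex'' and states the lemma for a single $\widehat{\Bar}_k(A)$, which is inconsistent with its own differential.

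Two local points in your sketch are stated imprecisely, though neither is a real gap. First, for $[d_\mu,d_1]=0$: since $d_\mu$ multiplies only adjacent pairs of genuine elements of $A$ (pairs involving a formal unit are declared not to contribute), the composite $d_1d_\mu$ contains no term whose multiplication site touches the substituted position; the unmatched terms are the two absorptions $x_{i-1}\cdot 1_A$ and $1_A\cdot x_{i+1}$ arising inside $d_\mu d_1$, and these cancel \emph{each other} (they yield the same tuple with opposite bar signs) rather than cancelling against terms of the other composite. Second, in the co-Leibniz identity for $d_\mu$ there is no boundary term to cancel: a cut placed exactly at the multiplication site occurs on neither side, since on one side the two factors have already been fused into a single tensor entry and on the other they lie in different blocks of the deconcatenation. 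With these two cancellation mechanisms stated correctly, your verification goes through.
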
 
\qed

Consider
\begin{equation}\label{defcobarplus}
\Cobar_+(\hat{\Bar}(A)):=\bigoplus_{\ell\ge 1}\widehat{\Bar}(A)[-1]^{\otimes \ell}\oplus \k=\Free_{\Assoc_+}(\widehat{\Bar}(A)[-1])
\end{equation}
We denote by $\otimes$ the tensor product involved in the definition of $\widehat{\Bar}(A)$, and by $\boxtimes$ the tensor product in \eqref{defcobarplus}. 

Denote  $1_\k\in\k\subset \Cobar_+(\hat{\Bar}(A))$. 
Endow $\Cobar_+(\hat{\Bar}(A))$ with the differential defined on generators as
\begin{equation}\label{dtotbarcobar}
d_\tot=d_\Bar+d_\Cobar
\end{equation}
where 
\begin{equation}
\begin{aligned}
d_\Cobar(\xi^k_{x_1,\dots,x_n})=&\pm \Delta(\xi^k_{x_1,\dots,x_n}),\ \ n\ge 2\\
d_\Cobar(\xi^1_1)=&1_\k
\end{aligned}
\end{equation}
and extended by the Leibniz rule. 

Here are some examples:

\begin{equation}
d\xi^1_1=1-1_A
\end{equation}
\begin{equation}
d\xi^1_{1,a}=a\otimes 1_A-\xi_1^1\boxtimes a
\end{equation}
\begin{equation}
d\xi^1_{a,1}=a\otimes 1_A-a\boxtimes \xi^1_1
\end{equation}
\begin{equation}
d\xi^1_{a,b,1}=a\otimes b\otimes 1_A\pm \xi^1_{ab,1}\pm a\boxtimes \xi^1_{b,1}\pm (a\otimes b)\boxtimes \xi^1_1
\end{equation}
\begin{equation}
d\xi^2_{1,a,1}=\xi^1_{1_A,a,1}\pm \xi^1_{1,a,1_A}\pm \xi^1_1\boxtimes \xi^1_{a,1}\pm \xi^1_{1,a}\boxtimes \xi^1_1
\end{equation}

\begin{lemma}
One has $d_\Bar^2=0$, $d_\Cobar^2=0$, $d_\Bar d_\Cobar+d_\Cobar d_\Bar=0$, which makes $\Cobar_+(\widehat{\Bar}(A))$ a bicomplex.
\end{lemma}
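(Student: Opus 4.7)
The plan is to reduce the three identities to the dg coalgebra axioms already established for $\widehat{\Bar}(A)$ in Lemma \ref{lemmabarhat}, using that both $d_\Bar$ and $d_\Cobar$ on $\Cobar_+(\widehat{\Bar}(A))=\Free_{\Assoc_+}(\widehat{\Bar}(A)[-1])$ are extensions by the Leibniz rule with respect to the free product $\boxtimes$. Consequently it suffices to verify each identity on a single generator $\xi^k_{x_1,\dots,x_n}$, after which the Leibniz rule propagates them to every monomial.

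The identity $d_\Bar^2=0$ on a generator is immediate from Lemma \ref{lemmabarhat}, which asserts that $d_\Bar$ is a differential on $\widehat{\Bar}(A)$. For $d_\Cobar^2=0$ with $n\ge 2$, one has $d_\Cobar\xi^k_{x_1,\dots,x_n}=\pm\Delta(\xi^k_{x_1,\dots,x_n})$; applying $d_\Cobar$ again and expanding via Leibniz along $\boxtimes$ yields
\[
\pm\bigl((\Delta\boxtimes\id)-(\id\boxtimes\Delta)\bigr)\Delta(\xi^k_{x_1,\dots,x_n}),
\]
which vanishes by coassociativity of $\Delta$. The exceptional generator $\xi^1_1$ satisfies $d_\Cobar^2\xi^1_1=d_\Cobar(1_\k)=0$. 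One must also check the case in which $\xi^1_1$ arises as one of the tensor factors in $\Delta(\xi^k_{x_1,\dots,x_n})$: here the rule $d_\Cobar\xi^1_1=1_\k$ produces $1_\k$-valued contributions whose pairwise cancellation is dictated by coassociativity applied to the augmentation. Finally $d_\Bar d_\Cobar+d_\Cobar d_\Bar=0$ on $\xi^k_{x_1,\dots,x_n}$, $n\ge 2$, is exactly the chain-map property $\Delta d_\Bar=(d_\Bar\boxtimes\id\pm\id\boxtimes d_\Bar)\Delta$ from Lemma \ref{lemmabarhat}. For $\xi^1_1$ it is verified directly: $d_\Bar\xi^1_1=\pm 1_A$ with $d_\Cobar 1_A=0$ on the one hand, and $d_\Cobar\xi^1_1=1_\k$ with $d_\Bar 1_\k=0$ on the other, so both sides vanish separately. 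Combining the three identities gives $(d_\Bar+d_\Cobar)^2=0$, and the bigrading of $\Cobar_+(\widehat{\Bar}(A))$ by $\boxtimes$-length and by internal $\widehat{\Bar}$-degree exhibits it as a bicomplex.

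The main obstacle is not the conceptual mechanism, which mirrors the classical bar--cobar calculation, but rather the sign bookkeeping around the exceptional generator $\xi^1_1$ and its image $1_\k$ under $d_\Cobar$. This is the only place where the behaviour departs from the classical unital bar--cobar construction, and it is precisely what encodes the passage from the strictly unital to the weakly unital setting: the formal unit $1$ is not identified with $1_A$ but only becomes so after passing through $d_\Cobar$ and $d_\Bar$. Verifying that the signs in \eqref{dbark} and \eqref{deltabarhat} are coherent so the anomalous $1_\k$-valued contributions cancel in pairs is the one non-mechanical point of the verification.
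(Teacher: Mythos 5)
Your overall strategy --- verify each identity on the generators $\xi^k_{x_1,\dots,x_n}$ and propagate by the Leibniz rule along $\boxtimes$ --- is the natural one, and the paper offers no proof of this lemma to compare with (it is stated without proof). But the argument fails at exactly the point you yourself flag as the ``one non-mechanical point'', and it fails in a way that cannot be repaired by better sign bookkeeping: two of the three asserted identities are false on generators, and only their sum holds.

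Take the generator $\xi^1_{1,a}$ with $a\in A$ closed. Its cobar differential is $d_{\Cobar}\xi^1_{1,a}=-\xi^1_1\boxtimes a$ (the reduced coproduct has the single term $\xi^1_1\otimes a$), so applying $d_{\Cobar}$ again and using the rule $d_{\Cobar}\xi^1_1=1_\k$ gives
\[
d_{\Cobar}^2\xi^1_{1,a}=-1_\k\boxtimes a=-a\neq 0.
\]
There is a single anomalous term here; there is nothing for it to ``cancel in pairs'' with, and coassociativity is silent about it because $\xi^1_1\mapsto 1_\k$ is not a component of the reduced coproduct. (For $\xi^2_{1,a,1}$ one gets two anomalous terms, proportional to $\xi^1_{a,1}$ and $\xi^1_{1,a}$ respectively, which are distinct elements and again do not cancel each other.) Symmetrically, $d_{\Bar}^2\xi^1_{1,a}=d_{\Bar}(1_A\otimes a)=1_A\cdot a=a\neq 0$: the unit-replacement summand of $d_{\Bar}$, which turns the formal unit into $1_A$, does not anticommute with the multiplication summand, since once $1_A$ has been inserted the bar multiplication acts on it. For the same reason you cannot invoke the preceding lemma (that $\widehat{\Bar}(A)$ is a dg coalgebra) as a black box for $d_{\Bar}^2=0$; the identical defect already affects that statement. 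What does survive is that the two failures exactly compensate, so that $(d_{\Bar}+d_{\Cobar})^2=0$ and $\Cobar_+(\widehat{\Bar}(A))$ is a filtered complex --- but it is not a bicomplex in the asserted sense. This is precisely why the non-commented version of Appendix~B regroups the two unit rules into a single derivation $d_\xi$ with $d_\xi(\xi)=1_\k-1_A$ and claims only that $d_\xi$ anticommutes with the \emph{sum} $d_{\Bar}+d_{\Cobar}$, not with each summand separately. A correct write-up should either adopt that reorganization or prove only $(d_{\Bar}+d_{\Cobar})^2=0$ directly.
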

\qed

\begin{prop}\label{propbarhat}
Endowed with the differential \eqref{dtotbarcobar}, $\Cobar_+(\hat{\Bar}(A))$ is quasi-isomorphic to $A$, as a {\bf unital} dg algebra. 
\end{prop}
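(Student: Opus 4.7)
My plan is to construct a surjective unital dg algebra homomorphism $p\colon \Cobar_+(\widehat{\Bar}(A))\to A$ and to show it is a quasi-isomorphism via a filtration argument that reduces the statement to the classical non-unital bar--cobar duality, with the ``new'' unit $1_\k$ identified with $1_A$ through an explicit witness.

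I would define $p$ on algebra generators by $p(1_\k)=1_A$, $p(\xi^0_a)=a$ for $a\in A$, and $p(\xi^k_{x_1,\dots,x_n})=0$ for all other $(k,n)$, extending multiplicatively. The only nontrivial differential compatibility checks are on the single-bar generators: on $\xi^0_a$ both sides give $d_A a$; on $\xi^1_1$ one has $p(d_\tot\xi^1_1)=p(1_\k-\xi^0_{1_A})=1_A-1_A=0$. For every other generator each term produced by the three summands of $d_\Bar$ in \eqref{dbark} and by the Leibniz-extended coproduct $d_\Cobar$ carries a tensor factor on which $p$ vanishes (either a bar word of length $\geq 2$ or a factor $\xi^k$ with $k\geq 1$).

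To prove $p$ is a quasi-isomorphism I would equip $\Cobar_+(\widehat{\Bar}(A))$ with the ascending filtration $\mathcal F_m$ spanned by $1_\k$ together with monomials $\eta_1\boxtimes\cdots\boxtimes\eta_\ell$ of total formal-unit count $\sum_i k_i\leq m$. The $d_A$-part and the multiplication part of $d_\Bar$, and the coproduct part of $d_\Cobar$, each preserve $\sum_i k_i$; the ``replace formal $1$ by $1_A$'' term of $d_\Bar$ and the rule $d_\Cobar(\xi^1_1)=1_\k$ strictly decrease it, so the filtration is respected. On $\gr_0$ one recovers $\Cobar_+(\Bar(A))$, the unital cobar of the classical non-unital bar of $A$, whose cohomology is $\k\cdot[1_\k]\oplus A$ by the classical bar--cobar theorem. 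The element $\xi^1_1\in\gr_1$ is a $d_0$-cycle, and the induced $d_1$ on the spectral sequence sends $[\xi^1_1]\mapsto[1_\k]-[\xi^0_{1_A}]=[1_\k]-1_A$, collapsing the two unit classes and reducing the contribution of $\gr_0$ to $A$. Higher strata $\gr_m$ with $m\geq 1$ are shown to contribute trivially to $E_\infty$ via a K\"unneth decomposition (fixing the positions of the formal units within each factor) followed by an extra-degeneracy contracting homotopy: the formal units play the role of simplicial degeneracies, so that insertion of an additional formal $1$ at a designated leftmost slot provides the null-homotopy.

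The main obstacle is the acyclicity analysis of the strata $\gr_m$ with $m\geq 2$ and, in the same breath, the compatibility of the proposed insertion-of-a-formal-$1$ homotopy with the cobar coproduct \eqref{deltabarhat}. Because $\Delta$ splits a $\xi^k$ word into two pieces with $a+b=k$, and inserting a formal $1$ changes which positions are occupied by units, these interactions must be tracked carefully together with the sign conventions in \eqref{dbark}--\eqref{dtotbarcobar}, so that $dh+hd=\id$ really holds on the relevant associated graded. Once the contracting homotopy is in place, convergence of the spectral sequence is immediate from $\mathcal F_0$ being bounded below and from each $\mathcal F_m$ intersected with a fixed total degree being finite-dimensional, yielding $H^*(\Cobar_+(\widehat{\Bar}(A)))\cong A$ via $p$.
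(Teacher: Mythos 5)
Your overall architecture --- the projection $p$ killing all generators except $1_\k$ and the length-one bar words $\xi^0_a$, the ascending filtration by the number of formal units, the identification of $\gr_0$ with $\Cobar_+(\Bar(A))$, and the differential $d_1\colon[\xi^1_1]\mapsto [1_\k]-[1_A]$ gluing the two unit classes --- is the right one, and it is essentially the strategy the paper carries out in detail for the parallel statement of Appendix~B (the proposition asserting that $C(A)=\Cobar_+(\Bar(A_+))$ with the extra derivation $d_\xi$ resolves $A$; the proof of the present proposition is itself left blank in the source). However, the step you yourself flag as the main obstacle is genuinely broken as proposed. First, there is no K\"{u}nneth decomposition of $\gr_m$ obtained by ``fixing the positions of the formal units within each factor'': the multiplication part of the bar differential merges adjacent $A$-slots and hence shifts the positions of the formal units, and the coproduct part of the cobar differential cuts a bar word into two pieces, redistributing the units among the $\boxtimes$-factors; the proposed summands are therefore not subcomplexes. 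Second, ``insertion of an additional formal $1$'' cannot be a contracting homotopy of $\gr_m$, because it raises the unit count by one and so lands in $\gr_{m+1}$ rather than $\gr_m$; replacing it by insertion of the genuine unit $1_A$ (the classical extra degeneracy) does not help either, since on the associated graded the formal units are inert under multiplication and do not absorb $1_A$.

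The correct way to finish, following the paper's Appendix~B, is to observe that the full associated graded $\bigoplus_m\gr_m$ is precisely $\Cobar_+(\Bar(A\oplus\k\xi))$, where $\xi$ is a square-zero element of degree $-1$ annihilating $A$, with the ``replace a formal unit by $1_A$'' and ``$\xi^1_1\mapsto 1_\k$'' terms of the differential discarded. One then runs the inner spectral sequence of the $(d_\Bar,d_\Cobar)$-bicomplex taking $d_\Cobar$-cohomology first (the opposite order diverges): the acyclicity of the cobar construction of a cofree conilpotent coalgebra, $\Cobar(T^\vee_{\ge 1}(V))\simeq V[-1]$, yields $H(\gr_\bullet)\cong\k\oplus A\oplus\k\xi$, which sorted by unit count gives exactly $E_1^0=\k\oplus A$, $E_1^1=\k\cdot[\xi^1_1]$ and $E_1^m=0$ for $m\ge 2$; your $d_1$ computation then closes the argument. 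Two smaller points: your blanket claim that every term of the differential of a generator other than $\xi^0_a$ and $\xi^1_1$ contains a tensor factor killed by $p$ fails for $\xi^0_{a,b}$, whose differential contributes the two surviving terms $\pm\xi^0_{ab}$ and $\pm\,\xi^0_a\boxtimes\xi^0_b$, which cancel only after applying $p$ and using that $p$ is an algebra map; and convergence of the outer spectral sequence should be argued from the filtration being exhaustive and bounded below together with degeneration at $E_2$, since the strata of fixed total degree are not finite-dimensional once $A$ is infinite-dimensional.
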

\begin{proof}
	
\end{proof}

\begin{remark}{\rm
Note a difference between the statement of Proposition 	\ref{propbarhat} and the classical statement $\Cobar(\Bar(A))\overset{quis}{\sim}A$. In the latter statement, $\Cobar(\Bar(A))$ is a {\it non-unital} dg algebra, whence our $\Cobar_+(\widehat{\Bar}(A))$ is a {\it unital} dg algebra. Thus, Proposition \ref{propbarhat} provides a cofibtant resolution of $A$ as a {\it unital} dg algebra, for a unital dg algebra $A$. 
}
\end{remark}

\subsection{\sc Computation for $\Hot(C,D)$}

\subsection{\sc Computation for $\Hot^\Delta(C,D)$}

\endcomment

\section{\sc A proof of Proposition \ref{propph}}
Here we prove Proposition \ref{propph}.\\
Recall the maps $P_n$, $n\ge 1$ defined in \eqref{bigP}.\\
Recall that  by Lemma \ref{lemmaph} one has $$deg(P_n(\phi_1, .., \phi_n))_{ij} = |\phi_1| + .. + |\phi_n| - n + 1 = deg (p_l(\kappa_1, .., \kappa_l)), \forall \kappa \in \pa_{ij}$$

Proposition \ref{propph} reads:
\begin{prop}
	The maps $\{P_n\},\ n\ge 1$ are Taylor components of an $A_\infty$ functor  $$P\colon {C}^{\pretr} \oplus \Bbbk_{{C}^{\pretr}}\to {C}^{\pretr}$$ 
\end{prop}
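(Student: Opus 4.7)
The plan is to verify the $A_\infty$ functor relations for $\{P_n\}_{n\ge 1}$ by a direct computation, reducing them to three ingredients: the $A_\infty$ relations satisfied by the structure map $p\colon C\oplus\Bbbk_C\to C$ of $C$; the Maurer--Cartan equation $d_C q + q\circ q=0$ obeyed by the twisting cochain of each object of $C^{\pretr}$; and the normalisation $p_n(f_1,\dots,f_n)=0$ whenever $n\ge 2$ and all $f_i$ lie in $i(C)$. First I would compute $d_{C^{\pretr}} P_n(\phi_n,\dots,\phi_1)$ using the formula
\[
d\Phi = d_C\Phi + q^n\circ\Phi - (-1)^{|\Phi|}\Phi\circ q^0
\]
for the differential on $C^{\pretr}$, applying $d_C$ termwise to each summand of \eqref{bigP}. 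The $A_\infty$ relations for $p$ expand $d_C p_\ell(\kappa_\ell,\dots,\kappa_1)$ into ``linear'' contributions $\sum_r\pm p_\ell(\kappa_\ell,\dots,d_C\kappa_r,\dots,\kappa_1)$ and ``quadratic'' contributions $\sum\pm p_a(\cdots)\circ p_b(\cdots)$; the outer pieces $q^n\circ P_n$ and $P_n\circ q^0$ can be absorbed into the path sum as paths carrying one additional horizontal arrow on either end.

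The matching then proceeds along three channels. When $\kappa_r$ is a horizontal arrow $q^s_{cd}$, the Maurer--Cartan identity $d_C q^s_{cd}=-\sum_{c<e<d} q^s_{ed}\circ q^s_{ce}$ produces pairs of consecutive horizontal arrows; these cancel against summands of $\sum_\kappa$ in which the same two horizontal arrows appear directly, via a sign-preserving bijection between ``contracted'' and ``split'' paths. When $\kappa_r$ is essential, i.e.\ a component of some $\phi^s$ or of a weak unit $1_{X^{s-1}_c}$, the term $d_C\kappa_r$ together with the contributions of paths carrying an additional horizontal arrow immediately before or after $\kappa_r$ assemble precisely into the $C^{\pretr}$-differential of $\phi^s$, yielding the diagonal terms $P_n(\phi_n,\dots,d_{C^{\pretr}}\phi^s,\dots,\phi_1)$; the outermost horizontal arrows absorb the boundary pieces $q^n\circ(-)$ and $(-)\circ q^0$. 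Finally, for each splitting index $0\le r\le n$ and each meeting object $X^r_c$, the quadratic terms reorganize as $P_{n-r}(\phi_n,\dots,\phi_{r+1})\circ P_r(\phi_r,\dots,\phi_1)$, using that every path $\kappa$ is uniquely the concatenation of its restrictions to the two sub-diagrams at the chosen meeting point.

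The normalisations $P\circ i=\id$ and $P_n\equiv 0$ on inputs from $i(C^{\pretr})$ for $n\ge 2$ follow directly from \eqref{bigP}: if no $\phi_i$ is a weak unit then every path consists of arrows in $i(C)$, so $p_\ell(\kappa_\ell,\dots,\kappa_1)$ vanishes for $\ell\ge 2$, while for $\ell=1$ the unique contributing path recovers the input itself. The identity $\id_X:=P_1(1_X)$ then unwinds to the explicit formula \eqref{eqidx}. As for signs, the exponent $|\kappa|=\sum_s(|\phi^s|+t_s+1)N_s$ records the Koszul sign acquired when permuting each essential arrow past the $N_s$ horizontal arrows to its left: each horizontal arrow has total degree $1$ in $C^{\pretr}$, each essential component of $\phi^s$ has degree $|\phi^s|+t_s$ with $t_s=r^{s+1}-r^s$, and a direct check shows these signs match those dictated by \eqref{ainftysigns} as well as the conventions in $d_{C^{\pretr}}$.

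The main obstacle is the horizontal-cancellation step. Concretely, matching the $d_C q^s_{cd}$-summands from the linear channel with the ``already split'' two-$q$ configurations in $\sum_\kappa$ requires a carefully chosen bijection of labelled paths together with a detailed sign check, and it is here that the Maurer--Cartan equation and the recursive structure of the path-sum interact most delicately. The remaining identifications --- reassembling $d_C\phi^s$ with the adjacent-$q$ paths into $d_{C^{\pretr}}\phi^s$, and splitting the quadratic terms into products of two $P_\bullet$'s --- are essentially combinatorial bookkeeping once the horizontal cancellation is under control.
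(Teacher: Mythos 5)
Your proposal follows essentially the same route as the paper's proof of Proposition \ref{propph}: expand $dP_n$ termwise over paths, apply the $A_\infty$ relations for $p$, cancel the $q$-contributions via the Maurer--Cartan equation $dq+q\circ q=0$, reassemble the remaining linear terms into the $C^{\pretr}$-differentials of the $\phi^i$, and split the quadratic terms into $P_a\circ P_b$ by cutting each path at a meeting object; like the paper, you defer the sign verification to a routine check. One bookkeeping gap: your opening expansion of $d_C p_\ell(\kappa_\ell,\dots,\kappa_1)$ lists only ``linear'' and ``quadratic'' contributions, whereas \eqref{ainftysigns} also produces composition terms $p_{\ell-1}(\dots,\kappa_{r+1}\circ\kappa_r,\dots)$; you implicitly use these for the $q\circ q$ cancellation and for absorbing $q\circ\phi$ and $\phi\circ q$ into $d_{C^{\pretr}}\phi$, but your three matching channels never account for the case where two consecutive \emph{essential} arrows are composed, which is exactly what produces the required right-hand-side terms $P_{n-1}(\phi_n,\dots,\phi_{k+1}\circ\phi_k,\dots,\phi_1)$ of the $A_\infty$ identity \eqref{eq.fund}. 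This is easily repaired --- the paper explicitly lists $\phi_{i+1}\circ\phi_i$ as one of the three types of composition terms and matches it against $P_{n-1}(\dots,m(\phi_{i+1},\phi_i),\dots)$ --- but as written your case analysis does not close.
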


\begin{proof}
Proving the statement amounts to proving the following identies, for any $n\ge 1$ (see \eqref{ainftysigns} for the sign convention):
	\begin{equation}\label{eq.fund}
	\begin{aligned}
&\ 	d P_n(\phi_n,\dots,\phi_1) +
\sum_{a+b = n} (-1)^{(a-1)(\sum_{i=1}^b|\phi_i|)+b-1} P_a(\phi_n,\dots,\phi_{b+1})P_{b}(\phi_b,\dots,\phi_1)=\\
&\sum_{k=1}^{n-1} (-1)^{k-1} P_{n-1}(\phi_n,\dots, \phi_{k+1}\circ \phi_{k},\dots,\phi_1) + \sum_{k=1}^{n} (-1)^{n-1+\sum_{i=1}^{k-1}|\phi_i|} P_n(\phi_n,\dots, d\phi_k, \dots,\phi_1),
	\end{aligned}
	\end{equation}
	where 
	\begin{equation}
	dP_n(\phi_n,\dots,\phi_1)= d_{naive}P_n(\phi_n,\dots,\phi_1) + q'\circ P_n(\phi_n,\dots,\phi_1) - (-1)^{n-1} P_n(\phi_n,\dots,\phi_1)\circ q
	\end{equation}

	Writing down explicitly the first line of equation \ref{eq.fund} (and dropping the signs to $\pm$ for simplicity), we get 
	\begin{equation}\label{eq*}
	\begin{aligned}
	\ &d P_n(\phi_n,\dots,\phi_1)=d_{naive}P_n(..) + q'\circ P_n(..) +(-1)^{n-1} P_n(..)\circ q =\\ 
	&d_{naive}\Bigl(\sum_{\kappa \in \pa} \pm p_m(\kappa_m, .., \kappa_1)\Bigr) + q'\circ \sum_{\kappa \in \pa}\pm p_{m'}(\kappa_{m'}, .., \kappa_{1}) +(-1)^{n-1} \sum_{\kappa \in \pa}p_{m''}(\kappa_{m''}, .., \kappa_{1})\circ q = \\
	&\sum_{\kappa \in \pa} \Bigl(\sum_{a+b = m} \pm p_a(..)\circ p_{b}(..) + \sum_{i=1}^{n-1} \pm p_{m-1}(.., m(\kappa_{i+1},\kappa_{i}), ..) + \sum_{i=1}^{m} \pm p_n(.., d\kappa_i, ..)\Bigr)+ \\
	&q'\circ \sum_{\kappa \in \pa}\pm p_{m'}(\kappa_{m'}, .., \kappa_{1}) +(-1)^{n-1} \sum_{\kappa \in \pa}\pm p_{m''}(\kappa_{m''}, .., \kappa_{1})\circ q
	\end{aligned}
	\end{equation}
	We stress that inside the terms $\sum_{\kappa \in \pa} \left(\sum_{i=1}^{n-1} (-1)^{s} p_{m-1}(.., m(\kappa_{i+1},\kappa_{i}), ..)\right)$ the composition term might be of following three types:
	\begin{itemize}
		\item $q_{jl}\circ q_{kj}$ 
		\item $q\circ\phi$ or $\phi\circ q$
		\item $\phi_{i}\circ\phi_{i+1}$.
	\end{itemize}
	Similarly inside the terms $\sum_{\kappa \in \pa}\left( \sum_{i=1}^{m} (-1)^{t} p_n(.., d\kappa_i, ..) \right)$ the term which is differentiated might be of the following two types:
	\begin{itemize}
		\item $d\phi$
		\item $dq_{kl}$.
	\end{itemize}
	
We also write down the possible terms of $\sum_{\kappa \in \pa}\left( \sum_{a+b = m} \pm  p_a(..)p_{b}(..) \right)$:
	\begin{itemize}
		\item $p_a(..)p_b(..)$
		\item $q\circ p_{m}$ or $p_l(..) \circ q$.
	\end{itemize}
	
	Thence we can write for the r.h.s. of \eqref{eq*}
	\begin{equation}
	\begin{aligned}
\ &	(\text{r.h.s. of }\eqref{eq*}) = \\
&\sum_{\kappa \in \pa} \pm \Bigl( \uline{\sum_{i} \pm p_n(.., d\phi, ..)} \pm \dashuline{\sum_{i}\pm p_n(.., dq_{kl}, ..)} \pm \dashuline{\sum_{i} \pm p_n(.., q_{jl}\circ q_{kj}, ..)}  \Bigr)+ \\
	& \sum_{\kappa \in \pa}\pm  \Bigl( \pm \uline{\sum_{i} \pm  p_n(.., q\circ\phi, ..)} \pm \uline{\sum_{i} \pm p_n(.., \phi\circ q, ..)} 
	\pm \dotuline{\sum_{i} \pm p_n(.., \phi_{i+1}\circ\phi_{i}, ..)} \Bigr)+ \\ 
	&\sum_{\kappa \in \pa}\pm \Bigl( \uwave{\pm q' \circ p_m(..)} \pm \uuline{\pm  p_l(..)\circ q} \pm \sum_{a} \pm  p_a(..)\circ p_b(..) \Bigr)+ \\ 
	&\uwave{q'\circ \sum_{\kappa \in \pa} \pm p_{m'}(\kappa_1, .., \kappa_{m'})}+ \uuline{\sum_{\kappa \in \pa} \pm p_{m''}(\kappa_1, .., \kappa_{m''})\circ q} \overset{?}{=}\\
	&\sum_{a+b=n}\pm P_a(..)\circ P_{b}(..) + \dotuline{\sum_{i}\pm P_{n-1}(.., m(\phi_{i+1},\phi_{i}), ..)} + \uline{\sum_{i}\pm P_n(.., d\phi, ..)}
	\end{aligned}
	\end{equation}
	We have to prove the equation marked by question sign. 
	The \dashuline{dashed underlined} terms gets cancelled by the Maurer-Cartan condition on the $q_{ij}$: indeed if there exists $\kappa \in \pa_{ij}$ which contains $q_{kl}$, there will also exist a path $\kappa' \in \pa_{ij}$ containing all the terms $q_{jl}\circ q_{kj}$, since they have got same domain and codomain as $q_{kl}$ and in $\pa_{ij}$ we were considering all the possible paths. \\
	Therefore we are left with the following expressions:
	\begin{equation*}
	\begin{aligned}
	&\ \pm \sum_i  \sum_{\kappa \in \pa} p_n(.., m(\phi_{i+1},\phi_{i}), ..)  \pm \sum_{a+b=n}  \sum_{\kappa \in \pa}\pm  p_a(...)\circ p_b(...) + \\
	&\sum_{i} \sum_{\kappa \in \pa}\pm \bigl(p_n(.., d\phi_i,..) + p_n(.., q'\circ\phi_i,..) + p_n(.., \phi_i\circ q,..))  \bigr) = \\
	&\sum_{a+b=n}\pm P_a(..)\circ P_{b}(..) + \sum_{i}\pm  P_{n-1}(.., m(\phi_{i+1},\phi_{i}), ..) + \sum_{i}\pm P_n(.., d\phi, ..),
	\end{aligned}
	\end{equation*}
	which shows the desired equation, up to signs. The correctness of signs (which were explicitely defined in \eqref{bigP} and \eqref{eq.fund}) is checked by a long but routinous computation.
\end{proof}

\comment
Therefore, we defined an $A_\infty$-map $P$ from the strict unital dg-category $\mathcal{C}^{pretr} \oplus \Bbbk_{\mathcal{C}^{pretr}}$ to the non unital dg category $\mathcal{C}^{pretr}.$
In particular, it simply follows that: if $\C \in \mathcal{C}at_{dgwu}(\Bbbk)$, then $\C^{pretr} \in \mathcal{C}at_{dgwu}(\Bbbk)$, whereas if $\C \in \mathcal{C}at^{'}_{dgwu}(\Bbbk)$, then $\C^{pretr} \in \mathcal{C}at^{'}_{dgwu}(\Bbbk).$
\endcomment

\appendix
\section{}

\comment
In this appendix we will remind the reader about the notions of $A_{\infty}$-category and of $A_{\infty}$-functor, in order to have a convenient reference for signs.

\begin{defn}[$A_{\infty}$-category]
An $A_{\infty}$-category $A$ is the data of: 
\begin{itemize}
\item A set of objects $Ob(A)$;
\item for any pair of objects $x,y \in Ob(A)$, a $\mathbb{Z}-$graded vector space over $\Bbbk$, denoted by $Hom_{A}(x,y)$;
\item for $n\geq1$ and each sequence of objects $x_0, .., x_n$, a degree $2-n$ map 
\begin{equation*}
m_n: Hom_{A}(x_{n-1},x_n) \otimes \cdot \cdot \otimes Hom_{A}(x_{0},x_1) \rightarrow Hom_{A}(x_{0},x_n)
\end{equation*}
such that, for every $n\geq1$
\begin{equation}
\sum_{n=r+t+s} (-1)^{rs+t}m_{r+t+1}(id^{\otimes^{r}}\otimes m_s \otimes id^{\otimes^{t}}) = 0.
\end{equation}
\end{itemize}
\end{defn}

\begin{defn}[$A_{\infty}$-functor]
Let $A, \mathcal{B}$ be two $A_{\infty}$-categories. An $A_{\infty}$-functor $f: A \rightarrow \mathcal{B}$ is the data of:
\begin{itemize}
\item A map of sets $f_0: Ob(A) \rightarrow Ob(\mathcal{B})$
\item for $n\geq1$ and each sequence of objects $x_0, .., x_n$, a degree $1-n$ map
\begin{equation*}
f_n: Hom_{A}(x_{n-1},x_n) \otimes \cdot \cdot \otimes Hom_{A}(x_{0},x_1) \rightarrow Hom_{\mathcal{B}}(f_0(x_{0}),f_0(x_n))
\end{equation*}
such that, for every $n\geq1$
\begin{equation*}
\sum_{n=r+t+s}(-1)^{rs+t}f_{r+1+t}(id^{\otimes^{r}}\otimes m_s \otimes id^{\otimes^{t}}) = \sum_{\substack{1\leq r\leq n \\ i_1 + \cdot\cdot + i_r = n}}(-1)^{\epsilon_r}m'_r(f_{i_1} \otimes \cdot \cdot \otimes f_{i_r})
\end{equation*}
where 
\begin{equation*}
\epsilon_r = \epsilon_r(i_1, .., i_r) = \sum_{2\leq k \leq r} \left( (1-i_k) \sum_{1\leq l \leq k-1}i_l \right)
\end{equation*}
\end{itemize}
\end{defn}

\endcomment

In the paper, we use only $A_\infty$ functors between {\it dg categories}.

We need to specify the signs in the $A_\infty$ identity. The common sign conventions can be found in [Kel, 3.4] for the {\it left to right} order formulas, and in [Ly1, 2.4] for the {\it right to left} order formulas. As we adapt here the right to left formalism, our signs agree with the ones in [Ly1].

\begin{defn}{\rm
Let $C,D$ be (non-unital) dg categories over $\k$. An $A_\infty$ functor $F\colon C\to D$ is given by: 
\begin{itemize}
	\item a map of sets $F_0\colon \Ob(C)\to\Ob(D)$,
	\item for any $n\ge 1$ and a sequence of objects $x_0,\dots,x_n\in\Ob(C)$ of degree $1-n$
	$$
	F_n\colon C(x_{n-1},x_n)\otimes\dots \otimes C(x_1,x_2)\otimes C(x_0,x_1)\to D(F_0(x_0),F_0(x_n))[1-n]
	$$
	such that one has:
	\begin{equation}\label{ainftysigns}
	\begin{aligned}
	\ &d(F_n(f_n\otimes\dots\otimes f_1))+\\
	&\sum_{a+b=n}(-1)^{b-1+(a-1)(|f_1|+\dots+|f_b|)}F_a(f_n\otimes\dots\otimes f_{b+1})\cdot F_b(f_b\otimes \dots \otimes f_1)=\\
	&\sum_{k=0}^{n-1}(-1)^{n-1+|f_1|+\dots+|f_k|}F_n(f_n\otimes\dots\otimes f_{k+2}\otimes d(f_{k+1})\otimes f_k\otimes\dots\otimes f_1)+\\
	&\sum_{k=0}^{n-2}(-1)^kF_{n-1}(f_n\otimes\dots\otimes f_{k+3}\otimes (f_{k+2}\circ f_{k+1})\otimes f_k\otimes \dots\otimes f_1)
	\end{aligned}
	\end{equation}
\end{itemize}	

}
\end{defn}

\begin{defn}\label{defintro1}{\rm
		Let $C,D$ be (unital) dg categories over $\k$, $F\colon C\to D$ an $A_\infty$ functor, $\{F_i\}_{i\ge 1}$ its Taylor components.
		\begin{itemize}
			\item[(1)] $F$ is called {\it strongly unital} if $F_1(\id_X)=\id_{F(X)}$ for any object $X\in C$, and $F_n(\dots, \id_X, \dots)=0$ for any object $X$ and $n\ge 2$,
			\item[(2)] $F$ is called {\it weakly unital} if $F_1(\id_X)=\id_{F(X)}$ for any object $X\in C$ (and the second condition is dropped).
		\end{itemize}
	}		
\end{defn}		

\section{\sc A cofibrant resolution of a unital dg algebra}
Here we provide a canonical unital cofibrant dg algebra, quasi-isomorphic to a unital dg algebra $A$ over a field $\k$. (We consider only the case of a dg algebra for simplicity, the construction is directly generalised for the case of a small dg cateory).

The classical bar-cobar resolution of $A$ fails to be unital, it is only weakly unital dg algebra. A well-known explicit unital construction comes from the {\it curved} version of bar-cobar duality, due to L.Positselski [Pos] (see also [Ly3]). A drawback of this construction is that it is not canonical. 

We haven't seen this construction in the literature. In our opinion, it deserves to be included as an appendix to this paper, by two reasons. At first, it ``replaces'' the bar-cobar resolution, which fails to be strictly unital, which was one of our starting points. At second, it can be easily generalised to a cofibrant resolution of $i(C)$ in $\Cat_\dgwu(\k)$, for $C\in \Cat_\dg(\k)$. 

Let $A$ be a unital dg algebra over $\k$. Consider the dg algebra $A_+=A\oplus \k[1]$. It is a unital dg algebra with unit $1_A$, and the product of $A$ with $\k[1]$, as well as of $\k[1]$ with itself, is defined as 0. 

Consider the bar-complex 
$$
\Bar(A_+)=\bigoplus_{n\ge 1}A_+[1]^{\otimes n}
$$
which is a dg coalgebra.
We use the notation $\xi$ for a generator of $\k[1]$. Then  a general monomial element of $\Bar(A_+)$ is denoted as $$
a_1\otimes\dots\otimes a_{i_1}\otimes \xi\otimes a_{i_1+1}\otimes\dots\otimes a_{i_2}\otimes \xi\otimes a_{i_2+1}\otimes\dots
$$
Now consider the {\it unital} dg algebra $$C_0(A)=\Cobar_+(\Bar(A_+))$$
where, for a dg coalgebra $B$,
$$
\Cobar_+(B)=\k\oplus \bigoplus_{n\ge 1}B[-1]^{\otimes n}
$$
with the cobar-differential. It is a unital dg algebra. Denote by $1_\k$ the unit of $\k$. It is the unit of $C_0(A)$.
We denote the product in $\Cobar_+(B)$ by $\boxtimes$.

Consider a derivation $d_\xi$ of degree +1 of $C_0(A)$ whose only non-zero Taylor coefficient is linear, and is defined as 
\begin{equation}
\begin{aligned}
\ &d_\xi|_A=0,\ d_\xi(\xi)=1_\k-1_A\\
&d_\xi(x_1\otimes \dots \otimes x_n)=\sum_{\ell=1}^k
\pm x_1\otimes \dots \otimes x_{i_\ell-1}\otimes 1_A\otimes x_{i_\ell+1}\otimes\dots\otimes x_n\\
&\text{where $x_{i_1}=\dots=x_{i_k}=\xi$ and other $x_i\in A$}
\end{aligned}
\end{equation}
One has
\begin{lemma}
	The differential $d_\xi$ squares to 0, and $d_\xi$ commutes with $d_\Bar+d_\Cobar$. Consequently, $d_\tot:=d_\Bar+d_\Cobar+d_\xi$ squares to 0.
\end{lemma}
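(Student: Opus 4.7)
The statement has three assertions: $d_\xi^2=0$, $[d_\xi,\,d_\Bar+d_\Cobar]=0$, and then $d_\tot^2=0$. The last is formal from the first two together with the classical identity $(d_\Bar+d_\Cobar)^2=0$, since
\begin{equation*}
d_\tot^2=(d_\Bar+d_\Cobar)^2+[d_\xi,\,d_\Bar+d_\Cobar]+d_\xi^2.
\end{equation*}
The key structural observation is that both $d_\xi$ and $d_\Bar+d_\Cobar$ are $\boxtimes$-derivations of degree $+1$ on $C_0(A)$, so their graded commutator is again a derivation; hence each identity need only be checked on a generating family for the algebra $C_0(A)=\Cobar_+(\Bar(A_+))$, namely on the elements of $\Bar(A_+)$ sitting as length-one $\boxtimes$-tensors.

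For $d_\xi^2=0$ on a monomial $x_1\otimes\cdots\otimes x_n\in\Bar(A_+)$ with $k$ positions $i_1<\cdots<i_k$ carrying a $\xi$, applying $d_\xi$ twice produces a sum over ordered pairs of such positions with $\xi$'s replaced by $1_A$'s. Swapping the two substitutions gives the same monomial with opposite Koszul sign (this is the standard coefficient matching for a degree $+1$ operation that destroys copies of a fixed generator). The only exception is $n=1$, $x_1=\xi$, where $d_\xi(\xi)=1_\k-1_A$; then both $d_\xi(1_\k)=0$ (as $1_\k$ is the unit of $C_0(A)$ and any derivation annihilates it) and $d_\xi(1_A)=0$ (since $1_A\in A$ contains no $\xi$).

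The heart of the proof is $[d_\xi,\,d_\Bar+d_\Cobar]=0$. Evaluating the commutator on a generator $x_1\otimes\cdots\otimes x_n$, the output is organized into three families of terms. \emph{Internal} terms: the internal $d_{A_+}$ component of $d_\Bar$ commutes with $d_\xi$ because $d(\xi)=0$ and $d(1_A)=0$, so it contributes nothing. \emph{Absorption} terms: in $d_\Bar\,d_\xi$, after a $\xi$ at position $i$ has been replaced by $1_A$, the bar multiplication $1_A\cdot x_{i+1}=x_{i+1}$ or $x_{i-1}\cdot 1_A=x_{i-1}$ produces a length $(n-1)$ tensor in which the $\xi$-position has been \emph{erased}; such terms have no partner in $d_\xi\,d_\Bar$ because in $A_+$ one has $\xi\cdot a=a\cdot\xi=0$. \emph{Deconcatenation} terms: in $d_\xi\,d_\Cobar$, whenever the deconcatenation $\Delta$ isolates the $\xi$ as a singleton factor, the value $d_\xi(\xi)=1_\k-1_A$ produces, via $y\boxtimes 1_\k=y=1_\k\boxtimes y$, exactly a length-$(n-1)$ tensor in which the $\xi$-position has been erased, together with the $1_A$-substitution term that is already matched by $d_\Cobar\,d_\xi$. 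The plan is then a direct combinatorial match: the erasure terms of type (b) cancel the $1_\k$-contributions of type (c), while the $1_A$-contributions of type (c) pair off with $d_\Cobar\,d_\xi$, and all other bar-cobar pairings commute with the $\xi\to 1_A$ substitution verbatim, so that the remaining terms cancel by the usual bar-cobar Koszul identity.

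The main obstacle is purely the sign bookkeeping in the third step: the combinatorics is transparent but one has to check that the Koszul signs on the absorption and $1_\k$-contribution terms agree (with the appropriate relative sign) after tracking the position of the replacement and the deconcatenation. This is the standard shuffle of signs from the derivation rule for $\boxtimes$, the shift $[-1]$ on $\Bar(A_+)$ inside $\Cobar_+$, and the Koszul sign rule for passing $d_\xi$ through earlier tensor factors; once the signs are fixed in one small model case (e.g.\ $n=2$, $x_1=\xi$, $x_2=a$, which I would verify explicitly as a sanity check), the general case follows by the same pattern applied positionwise.
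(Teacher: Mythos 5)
The paper itself gives no argument here beyond ``It is a direct computation,'' so there is nothing to compare against except the computation itself; your outline is the natural way to organize it (reduce to generators via the $\boxtimes$-derivation property, then match terms), and most of it is sound: the reduction of $d_\tot^2=0$ to the two commutator identities, the verification of $d_\xi^2=0$ by pairing ordered substitutions, the disposal of the internal-differential terms, and the identification of the two problematic families (absorption terms in $d_\Bar d_\xi$ and the $1_\k$-contributions of $d_\xi(\xi)=1_\k-1_A$ in $d_\xi d_\Cobar$) are all correct.

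The one point that would fail if executed as written is the claimed pairing ``the erasure terms of type (b) cancel the $1_\k$-contributions of type (c).'' These two families do not match bijectively. Take $\omega=a\otimes\xi\otimes b$ with $a,b\in A$: there are \emph{two} absorption terms (erasing the $\xi$ via $a\cdot 1_A$ and via $1_A\cdot b$) and \emph{zero} $1_\k$-contributions, since a single deconcatenation never isolates an interior $\xi$ as a singleton factor; here the two absorption terms cancel \emph{each other}, by the usual degeneracy of the unit in the bar complex (adjacent bar summands carry opposite signs). Dually, for $\omega=\xi\otimes\xi$ there are no absorption terms (all products with $\xi$ vanish in $A_+$) and two $1_\k$-contributions, which must cancel each other. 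The correct bookkeeping is per maximal run of consecutive $\xi$'s at positions $p,\dots,q$: all erasures within one run produce the same element of the complex, and that element receives exactly two contributions, one attached to each end of the run --- an absorption term if that end is interior to the word ($p>1$, resp.\ $q<n$), a $1_\k$-term if it sits at the boundary ($p=1$, resp.\ $q=n$) --- and these two cancel. Note in particular that the cancellation is not positionwise (in $a\otimes\xi\otimes\xi$ the absorption term erases the first $\xi$ while the $1_\k$-term erases the second, yet they produce the same tensor $a\otimes\xi$), so the ``positionwise'' extension from a small model case that you propose at the end would not locate the right partners. Once the pairing is set up run-by-run, the remaining sign check is as routine as you describe, and the lemma follows.
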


It is a direct computation.

\qed

We denote 
$$
C(A)=(C_0(A), d_\Bar+d_\Cobar+d_\xi)
$$
It is a unital dg algebra.

\begin{prop}
There is a unital dg algebra map $p\colon C(A)\to A$ which is a quasi-isomorphism.
\end{prop}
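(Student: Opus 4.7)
The plan is to define $p$ explicitly as a unital dg algebra map, check it commutes with $d_\tot$ on generators, and then prove it induces an isomorphism on cohomology via a spectral sequence associated with the filtration by the number of formal units $\xi$. Let $p\colon C(A) \to A$ be the unique unital dg algebra map whose value on cobar-length-one generators $y \in \Bar(A_+)[-1]$ is $p(a) = a$ when $y = a \in A$ has bar-length one, $p(\xi) = 0$, and $p(y) = 0$ on all bar tensors of bar-length $\ge 2$; the unital convention forces $p(1_\k) = 1_A$. To verify $p \circ d_\tot = d_A \circ p$ it suffices by the Leibniz rule to check on generators: on $a \in A$ this reduces to the classical bar-cobar compatibility (since $d_\xi$ vanishes on $\xi$-free expressions); on $\xi$ one has $p(d_\tot \xi) = p(1_\k - 1_A) = 1_A - 1_A = 0$; on a higher bar tensor $y$, the bar-length-one projections of all terms in $d_\tot y$ vanish after a short sign check.

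To establish the quasi-isomorphism, introduce the ascending filtration $F_k C(A)$ spanned by monomials containing at most $k$ occurrences of $\xi$ in total (counted both as standalone cobar generators and as constituents of bar tensors). The differentials $d_\Bar$ and $d_\Cobar$ preserve $F_\bullet$, while $d_\xi$ strictly lowers $F_\bullet$ by one, so $F_\bullet$ is a filtration by subcomplexes. The associated graded $(\gr^\bullet C(A), d_\Bar + d_\Cobar)$ coincides as a complex with $(\Cobar_+(\Bar(A_+)), d_\Bar + d_\Cobar)$, whose cohomology is $\k[1_\k] \oplus H^*(A_+) = \k[1_\k] \oplus H^*(A) \oplus \k[\xi]$ by the classical bar-cobar duality applied to the (non-unital) dg algebra $A_+ = A \oplus \k\xi$. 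Here $[\xi]$ lies in $\xi$-count one while $[1_\k]$ and the classes $[a]$ for $a \in H^*(A)$ lie in $\xi$-count zero.

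The induced differential $d_1 = d_\xi$ on the $E_1$-page sends $[\xi] \mapsto [1_\k] - [1_A]$, yielding $E_2 = (\k[1_\k] \oplus H^*(A))/\langle [1_\k] - [1_A]\rangle \cong H^*(A)$ with $[1_\k]$ identified with $[1_A]$. Because $d_\xi$ is the only contribution to $d_\tot$ that changes $\xi$-count and does so by exactly one, $d_r = 0$ for $r \ge 2$ and the spectral sequence degenerates at $E_2$; the filtration is exhaustive with $F_{-1} = 0$, hence it converges strongly to $H^*(C(A)) \cong H^*(A)$. Since $p$ induces the identity on $E_2$ by construction, we conclude that $H^*(p)$ is an isomorphism. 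The main subtle point to verify is that the induced differential $d_1$ on $E_1$ takes precisely the form $[\xi] \mapsto [1_\k] - [1_A]$ with no further corrections from the bar-cobar contracting homotopy, which will require choosing explicit $E_1$-representatives and tracking the action of $d_\xi$ on them.
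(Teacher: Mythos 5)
Your proposal is correct and follows essentially the same route as the paper: the same formula for $p$, the same ascending filtration by the number of $\xi$-factors, the identification of the associated graded with $(\Cobar_+(\Bar(A_+)), d_\Bar+d_\Cobar)$ (which the paper computes by an inner spectral sequence and the Koszul-duality fact that $\Cobar$ of a cofree conilpotent coalgebra on $V$ is quasi-isomorphic to $V[-1]$, rather than citing bar--cobar duality wholesale), and the differential $d_1\colon [\xi]\mapsto [1_\k]-[1_A]$ collapsing the $E_2$-page to $H^*(A)$. One small repair: degeneration at $E_2$ does not follow merely from the fact that $d_\xi$ shifts the $\xi$-count by exactly one (that alone never forces $d_r=0$ for $r\ge 2$, cf.\ the spectral sequence of a double complex), but it does hold here because $E_2$ is concentrated in the single filtration degree $b=0$, so all higher differentials vanish for positional reasons.
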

\begin{proof}
We start with computing the cohomology of $(C(A), d_\tot)$.
The differential $d_\Bar+d_\Cobar$ preserves the total number of $\xi$-factors in a (homogeneous in $\xi$) element of $C(A)$. It makes $C(A)$ a bicomplex. Define 
$$
\deg_\Bar(x_1\otimes\dots \otimes x_n)=-n+\sum_i \deg_0 x_i
$$
where $\deg_0(a)=\deg_A(a)$ and $\deg_0(\xi)=0$.
Next, define 
$$
\deg_1(\omega_1\boxtimes\dots\boxtimes \omega_k)=k+\sum_i \deg_\Bar(\omega_i)
$$
$$
\deg_\Cobar(\omega_1\boxtimes\dots\boxtimes \omega_k)=k
$$
$$
\deg_\Bar(\omega_1\boxtimes\dots\boxtimes \omega_k)=\sum_i \deg_\Bar(\omega_i)
$$
where $\omega_i\in \Bar(A_+)$.
Finally, define $$
\deg_\xi(\alpha)=-(\sharp(\xi)\text{\ in $\alpha$}), \ \alpha\in C(A)
$$

$$
\deg_\tot(\alpha)=\deg_1(\alpha)+\deg_\xi(\alpha)
$$
where $\deg_\tot$ is the degree of $\alpha$ in $C(A)$. 

Thus $C(A)$ becomes a bicomplex, with $C(A)^{a,b}$ defined as the spaces of elements $\alpha\in C(A)$ with $\deg_1\alpha=a$, $\deg_\xi(\alpha)=b$. 

We compute the cohomology of $C(A)$ by using a spectral sequence, which computes the cohomology of $d_\Bar+d_\Cobar$ at first. The bicomplex lives in the $II$ and $III$ quarters, so the spectral sequence converges.

The term $E_1^{a,b}$ of the spectral sequence is equal to 
$H^a(C(A)^{\udot,b}, d_\Bar+d_\Cobar)$. Thus, we have to compute the cohomology of the complex $(C(A)^{\ldot,b}, d_\Bar+d_\Cobar)$. Denote this complex by $C^\udot_b$.

The complex $C^\udot_b$ (for $b$ fixed) is by its own a bicomplex, 
with differentials $d_\Bar$ and $d_\Cobar$. Thus $C_b^{m,n}$ consist of all elements $\alpha\in C_b^\udot$ with $\deg_\Bar(\alpha)=m$ and $\deg_\Cobar(\alpha)=n$ (in this case, $\deg_\tot(\alpha)=m+n+b$). 

The spectral sequence, whose first differential is $d_\Cobar$, converges (the other possible spectral sequence, whose first differential id $d_\Bar$, generally diverges). 

We denote by $\mathbb{E}(b)_k^{m,n}$ the $k$-th term of this spectral sequence. We have
$$
\mathbb{E}(b)_1^{m,n}=H^m(\mathbb{E}(b)_1^{\udot,n}, d_\Cobar)
$$

\begin{lemma}
One has: 
\begin{equation}
\mathbb{E}(b)_1^{m,n}=\begin{cases}
0,& m\ne 0,1 \text{ or } m=0, n\ne 0\\
\k,& b=0, m=0, n=0\\
\k[1], &b=-1, m=1, n=-1\\
A^n,& b=0,m=1\\
0,& b\ne 0,-1
\end{cases}
\end{equation}
where $(-)^n$ stands for degree $n$ elements. 
\end{lemma}
\begin{proof}
	The argument is standard and comes from the following observation. Let $V$ be a (graded) vector space, consider the cofree non-unital coalgebra $T^\vee_{\ge 1}(V)=\oplus_{n\ge 1}V^{\otimes n }$. Then $\Cobar(T^\vee_{\ge 1}(V))$ is quasi-isomorphic to $V[-1]$. This statement is proven using Koszul duality. 
\end{proof}

It follows that the spectral sequence $\mathrm{E}_\ldot$ collapses at the $\mathbb{E}_1$ term. 

Now turn back to the spectral sequence $E_\ldot$. 

\begin{lemma}
One has
\begin{equation}
 E_1^{a,b}=\begin{cases}
 \k[1],&a=0, b=-1\\
 (\k\oplus A)^a, &b=0\\
 0,&\text{otherwise}
 \end{cases}
 \end{equation}
\end{lemma}
The differential $d_1$ is induced by $d_\xi$. It looks like
$$
{\k}[1]\xrightarrow{d_1} \k\oplus A, \ \ d_1\colon 1\mapsto 1_\k-1_A
$$
Its cohomology is isomorphic to $A$. The spectral sequence $E_\ldot$ collapses at the $E_2$ term.

It completes the computation of cohomology of $C(A)$. 

Now define a map of dg algebras $p\colon C(A)\to A$ on generators
\begin{equation}
\begin{aligned}
&p|_{(\k[1]\oplus A)^{\otimes n}}=0,\ n\ge 2\\
&p(\xi)=0\\
&p|_A=\id\\
&p(1_\k)=1_A
\end{aligned}
\end{equation}
and extend it to $C(A)$ as a map of algebras. 

It follows from the previous computation that $p$ is a quasi-isomorphism. 

\end{proof}

\section{\sc Push-outs in the category $\Cat_\dgwu(\k)$}\label{appc}
	Consider the following push-out diagram:
		\begin{equation}\label{pushoutc}
		\begin{tikzpicture}[baseline= (a).base]
		\node[scale=1] (a) at (5,5){
			\begin{tikzcd}
			\C \arrow[d, "i_2"'] \arrow[r, "i_1"] & \C_1 \\
			\C_2           & 
			\end{tikzcd}
		};
		\end{tikzpicture}
		\end{equation}
		where $i_a: \C \hookrightarrow \C_a, a=1,2$ are embeddings of wu dg categories. We provide an explicit formula for the push-out $\E$. This formula is essentially used in the proof of Theorem \ref{bigtheorem}.\\
	Note that this colimit $\E$ is equivalently the colimit of the following coequalizer diagram: 
		\begin{equation}\label{c1c2}
		\begin{tikzpicture}[baseline= (a).base]
		\node[scale=1] (a) at (5,5){
			\begin{tikzcd}
			\C \arrow[r, shift right] \arrow[r, shift left] & \C_1 \oplus \C_2 
			\end{tikzcd}
		};
		\end{tikzpicture}
		\end{equation}
		where the maps are $(i_1,0)$ and $(0,i_2)$.
	The category structure on  $\C_1 \oplus \C_2$ is defined as
		\[Ob(\C_1 \oplus \C_2) := Ob(\C_1) \sqcup Ob(\C_2) \]
		\begin{equation*}
		Hom_{\C_1 \oplus \C_2}(x,y) =
		\begin{cases}  Hom_{\C_a}(x,y) &  \text{ if } x, y \in Ob(\C_a) \\ 
		0 &  \text{ otherwise. } \end{cases} 
		\end{equation*}
		\comment
		In particular $p_n(...)$ is defined only on tuples of morphisms all belonging to the same wudg category $\C_j$. \\
		We can give an explicit description of $\E$ following the procedure described in the proof of the existence of the coequalizers [\cite{PS}, Proposition 1.18], namely exploiting the following diagram:
		\begin{equation}
		\begin{tikzpicture}[baseline= (a).base]
		\node[scale=1] (a) at (5,5){
			\begin{tikzcd}
			FUFU(\C) \arrow[d, shift right] \arrow[d, shift left] \arrow[r, shift right] \arrow[r, shift left] & FUFU(\C_1)\oplus FUFU(\C_2) \arrow[d, shift right] \arrow[d, shift left] \arrow[r] & FE' \arrow[d, shift right, dotted] \arrow[d, shift left, dotted] \\
			FU(\C) \arrow[d] \arrow[r, shift right] \arrow[r, shift left] \arrow[u, bend left] & FU(\C_1)\oplus FU(\C_2) \arrow[r] \arrow[d] \arrow[u, bend left] & FE \arrow[d, dotted] \arrow[u, bend left, dotted]  \\
			\C \arrow[r, shift right] \arrow[r, shift left]          & \C_1\oplus\C_2 \arrow[r,dotted]           & \E                    
			\end{tikzcd}
		};
		\end{tikzpicture}
		\end{equation}
		where we get $\E$ as the coequalizer of \[ \begin{tikzcd} FE' \arrow[r, shift right] \arrow[r, shift left] & FE \end{tikzcd}\]
		The matter is that new ``free" $p_n(...)'s$ and $m(..)$ are created in both $FE'$ and $FE$; indeed, in both the coequalizers $E$ and $E'$ we glue the objects of the dg-graphs $U\C_1$ and $U\C_2$ (or of $UFU\C_1$ and $UFU\C_2$) which are image of the same object of $\C$, and then we apply the free weakly unital dg-category functor $F$, therefore creating new $p_n(...)'s$ and $m(..)$ even between ``mixed" morphisms. \\
		\endcomment
		In [PS, Prop.1.18] we considered the general coequalizers in the category $\Cat_\dgwu(\k)$. Here we provide the corresponding description for the case of the coequalizer \eqref{c1c2}. Here we essentially use that $i_1,i_2$ are fully-faithful functors. The derivation of this description from loc.cit. is straightforward. 
		
		We use notation $\bar{a}$ which is defined as $\bar{1}=2$, $\bar{2}=1$.
		\begin{prop}\label{propappc}
		Assume $i_1,i_2$ in \eqref{pushoutc} are fully faithful. Then the push-out weakly unital dg category $\E$ has the following description.\\
		The objects of $\E$ are given by the coequalizer of sets:
		\[ Ob(\E) = Ob(\C_1) \sqcup Ob(\C_2) / \sim \]
		where $\sim$ is the equivalence relation generated by: $i_1(x) \sim i_2(x), \forall x \in Ob(\C)$. \\
		Consider $x,y \in Ob(\C_a)$, $a=1,2$.  Then:
	\begin{equation}\label{pushoutcase1}
		\begin{aligned}
		\E(x,y) = &\C_a(x,y)\bigoplus \bigoplus_{u,v\in\C} \mathcal{O}^\prime(3)\otimes \C_a(v,y)\otimes\Cbar_{\abar}(u,v)\otimes\C_a(x,u)  \bigoplus\\
	&\bigoplus_{u,v,u_1,v_1\in\C}\mathcal{O}^\prime(5)\otimes \C_a(v_1,y)\otimes \C_\abar(u_1,v_1)\otimes\C_a(u,v_1)\otimes \C_\abar(v,u)\otimes \C_a(x,v)\bigoplus\dots
	\end{aligned}
\end{equation}
	where we identify an object $u\in \C$ with its images $i_a(u)\in \C_a$, and $\Cbar_a(u,v) := \C_a(u,v) / i_a(\C(x,y))$. \\	
	For $x \in Ob(\C_1)\setminus \Ob(\C)$, $y \in Ob(\C_2)\setminus\Ob(\C)$, one has:
\begin{equation}\label{pushoutcase2}
\begin{aligned}
\E(x,y) :=\bigoplus_{u\in\C}\mathcal{O}^\prime(2)\otimes \C_2(u,y)\otimes \C_1(x,u) \bigoplus \bigoplus_{u,v,w\in\C}\mathcal{O}^\prime(4)\otimes \C_2(w,y)\otimes {\Cbar_1(v,w)} \otimes {\Cbar_2(u,v)}\otimes \C_1(x,u) \bigoplus \dots
\end{aligned}
\end{equation}
\end{prop}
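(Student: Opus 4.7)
The plan is to specialize the general coequalizer construction from [PS, Prop. 1.18] to the diagram \eqref{c1c2}, and then simplify the result using the fully-faithful hypothesis on $i_1, i_2$. Recall from Section 2 that $\Cat_\dgwu(\k)$ is monadic over $\Graphs_\dgu(\k)$ via the monad generated by the dg operad $\mathcal{O}^\prime$. A coequalizer in this category of algebras is computed as the coequalizer of the underlying graphs, followed by the free wu dg category functor $F$, modulo the relations identifying the freely generated $\mathcal{O}^\prime$-action with the given $\mathcal{O}^\prime$-actions on $\C_1$ and $\C_2$.

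First I would describe $\E$ as a quotient of a free wu dg category. The coequalizer of graphs underlying \eqref{c1c2} identifies $i_1(u)$ with $i_2(u)$ for each $u \in \Ob(\C)$ and glues the edges $i_1(\C(u,v))$ with $i_2(\C(u,v))$. Applying $F$, a morphism in $F(\cdot)(x,y)$ is an $\mathcal{O}^\prime$-tree whose leaves are labeled by morphisms in either $\C_1$ or $\C_2$. Passing to the quotient $\E$, we impose that any subtree all of whose leaves lie in $\C_a$ evaluates, via the given $\mathcal{O}^\prime$-algebra structure on $\C_a$, to a single morphism in $\C_a$.

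Next I would normalize each representative into the alternating form appearing in \eqref{pushoutcase1} and \eqref{pushoutcase2}. Two moves are essential: consecutive blocks of leaves lying in the same $\C_a$ collapse to a single morphism in $\C_a$ via the operadic action; and any leaf lying in $i_a(\C)$ can be viewed as lying in $i_{\bar{a}}(\C)$ and absorbed into an adjacent $\C_{\bar{a}}$-block by composition. The first and last blocks remain full $\C_a$-spaces (no adjacent block exists into which a $\C$-morphism can be absorbed), while the interior blocks appear modulo $i_\bullet(\C)$ and thus take values in $\Cbar_\bullet$. The operadic prefix $\mathcal{O}^\prime(2k+1)$ records how the $2k+1$ alternating blocks are amalgamated by a single $p_{n;i_1,\dots,i_k}$-type operation.

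The main obstacle will be to verify that the resulting expression is a direct sum, not merely a spanning set, ruling out hidden identifications between terms with different numbers of alternating blocks. The point is that the relations being imposed on the free $\mathcal{O}^\prime$-algebra come exclusively from the $\mathcal{O}^\prime$-structures of $\C_1$ and $\C_2$ taken separately, and an operation of $\mathcal{O}^\prime$ cannot mix $\C_1$-morphisms with $\C_2$-morphisms within either algebra structure; hence any further relation must factor through the common subcategory $\C$, and such relations are precisely those already accounted for by the passage to $\Cbar_{\bar{a}}$ in the interior slots. A careful bookkeeping, parallel to the one carried out in [PS, Prop. 1.18], then completes the argument.
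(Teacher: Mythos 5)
Your proposal follows essentially the same route as the paper: the paper itself gives no detailed argument but simply states that the description is obtained by specializing the general coequalizer construction of [PS, Prop.~1.18] to the diagram \eqref{c1c2}, using the fully-faithfulness of $i_1,i_2$ — precisely the monadic/free-algebra quotient, alternating normal form, and bookkeeping you outline. Your added discussion of why the decomposition is a direct sum (no hidden identifications beyond those factoring through $\C$, accounted for by the $\Cbar_{\bar a}$ quotients in the interior slots) fills in the step the paper declares ``straightforward'' and is consistent with it.
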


\bigskip
{\noindent\sc P.P.:}

{\small
	\noindent {\sc Universiteit Antwerpen, Campus Middelheim, Wiskunde en Informatica, Gebouw G\\
		Middelheimlaan 1, 2020 Antwerpen, Belgi\"{e}}}

\bigskip

\noindent{{\it e-mail}: {\tt Piergiorgio.Panero@uantwerpen.be}}

\bigskip
{\noindent\sc B.Sh.:}

{\small
	\noindent {\sc Universiteit Antwerpen, Campus Middelheim, Wiskunde en Informatica, Gebouw G\\
		Middelheimlaan 1, 2020 Antwerpen, Belgi\"{e}}}
\bigskip

{\small
	\noindent{\sc Laboratory of Algebraic Geometry,
		National Research University Higher School of Economics,
		Moscow, Russia}}

\bigskip

\noindent{{\it e-mail}: {\tt Boris.Shoikhet@uantwerpen.be}}


\begin{thebibliography}{99}

\bibitem[Ba]{Ba} M.A.Batanin, Monoidal globular categories as a natural environment for the theory of weak n-categories,  {\it Adv. Math.} {\bf 136}(1) (1998), 39-103

\bibitem[BK]{BK} A. I. Bondal and M. M. Kapranov, \emph{Enhanced triangulated categories}. Math. USSR Sbornik, vol. 70 (1991), no. 1, 93–107. (Russian original: vol. 181 (1990), no. 5).


\bibitem[COS]{COS} A.Canonaco, M.Ornaghi, P.Stellari, Localizations of the category of $A_\infty$ categories and internal Homs, preprint arXiv 1811.07830

\bibitem[Dr]{Dr} V.Drinfeld, DG quotients of DG categories, {\it J. Algebra} {\bf 272}(2),  (2004), 643-691

\bibitem[DS]{DS} W. G. Dwyer, J. Spalinski, Homotopy theories and model categories, in: {\it Handbook on Algebraic Topology}, Elsevier, 1995

\bibitem[Fa]{Fa} G. Faonte, \emph{Simplicial Nerve of an $A_\infty$-category}, Theory and Applications of Categories, Vol. 32, 2017, No. 2, pp 31-52.

\bibitem[GJ]{GJ} P.G.Goerss, J.F.Jardine, {\it Simplicial Homotopy Theory}, Birkh\"{a}user Progress in Mathematics, Vol. 174, 1999


\bibitem[GS]{GS} P.Goerss and K.Schemmerhorn, Model Categories and Simplicial Methods, Notes from lectures given at the University of Chicago, August 2004, available at P.Goerss' webpage

\bibitem[Hi]{Hi} Ph.S.Hirschhorn, {\it Model Categories and Their Localizations}, AMS Mathematical Surveys and Monographs, Vol. 99, 2003

\bibitem[Ho]{Ho} M.Hovey, {\it Model categories}, AMS Math. Surveys and Monographs, vol. 62, 1999

\bibitem[K1]{K1} M.Kontsevich,  Lectures at ENS, Paris, Spring 1998: notes taken by J.Bellaiche, J.-F.Dat, I.Marin, G.Racinet and H.Randriambololona, unpublished

\bibitem[K2]{K2} M.Kontsevich, Homological algebra of Mirror Symmetry, {\it Proceedings of the International Congress of Mathematicians}, Z\"{u}rich 1994, vol. I, Birkhauser 1995, 120-139


\bibitem[KS]{KS} M.Kontsevich, Y.Soibelman, Notes on $A_\infty$ algebras, $A_\infty$ categories, and non-commutative geometry, in: "Homological Mirror Symmetry: New Deevelopments and Perspectives" (A.Kapustin et al. (Eds.)), Lect. Notes in Physics 757 (Springer, Berlin Heidelberg 2009) 153-219

\bibitem[Kel]{Kel} B.Keller, Introduction to $A_\infty$ algebras and modules, {\it Homology, Homotopy, and Applications}, {\bf 3}(2001), 1-35


\bibitem[Ly1]{Ly1} V.Lyubashenko, Category of $A_\infty$ categories, {\it Homology Homotopy Appl.},
Vol. 5(1) (2003), 1-48

\bibitem[Ly2]{Ly2} V.Lyubashenko, Homotopy unital $A_\infty$ algebras, J. Algebra 329 (2011) no. 1, 190-212

\bibitem[Ly3]{Ly3} V.Lyubashenko, Bar and cobar constructions for curved algebras and coalgebras. {\it Mat. Stud.}, {\bf 40}(2) (2013), 115–131




\bibitem[PS]{PS} P. Panero, B. Shoikhet, \emph{A Quillen Model Structure on the Category of Kontsevich-Soibelman Weakly Unital DG Categories}, preprint arXiv 1907.07970

\bibitem[Pos]{Pos} L.~Positselski, Weakly curved $A_\infty$-algebras over a topological local ring. {\it Mém. Soc. Math. Fr. (N.S.)} No. 159 (2018), vi+206 pp. 


\bibitem[Sh]{Sh} B.~Shoikhet, On the twisted tensor product of small dg categories, to appear in {\it Journal of Non-commutative Geometry}, preprint arxiv:1803.01191

\bibitem[Tab1]{Tab1} G.Tabuada, A Quillen model structure on the category of dg categories
C. R. Math. Acad. Sci. Paris, 340 (2005), 15-19

\bibitem[Tab2]{Tab2} G.Tabuada, On Drinfeld's dg quotient, {\it Journal of Algebra}, {\bf 323} (2010), 1226-1240

\bibitem[To]{To} B.~To\"{e}n, The homotopy theory of dg-categories and derived Morita theory, {\it Invent. Math.} {\bf 167}(3),  (2007), 615-667



\end{thebibliography}
\end{document}